      \numberwithin{equation}{section}
\DeclareSymbolFont{usualmathcal}{OMS}{cmsy}{m}{n}
\DeclareSymbolFontAlphabet{\mathcal}{usualmathcal}
\DeclareMathAlphabet\BCal{OMS}{cmsy}{b}{n}
\newcommand{\mylabel}[2]{#2\def\@currentlabel{#2}\label{#1}}
\definecolor{cornellred}{rgb}{0.7, 0.11, 0.11}
\definecolor{britishracinggreen}{rgb}{0.0, 0.26, 0.15}
\definecolor{cobalt}{rgb}{0.0, 0.28, 0.67}
\DeclareMathOperator{\rk}{rk}
\newcommand{\Gr}{\mathrm{Gr}}
\newcommand{\sm}{\mathrm{sm}}
\newcommand{\OO}{\mathscr O}
\newcommand{\OZ}{\mathscr Z}
\newcommand{\OH}{\mathscr H}
\newcommand{\OI}{\mathscr I}
\DeclareMathOperator{\Hilb}{Hilb}
\DeclareMathOperator{\Hom}{Hom}
\DeclareMathOperator{\Spec}{Spec}
\DeclareMathOperator{\SET}{SET}
\DeclareMathOperator{\red}{red}
\DeclareMathOperator{\length}{len}
\DeclareMathOperator{\Sch}{Sch}
\DeclareMathOperator{\opp}{op}
\DeclareMathOperator{\Supp}{Supp}
\newcommand{\BA}{{\mathbb{A}}}
\newcommand{\BC}{{\mathbb{C}}}
\newcommand{\BG}{{\mathbb{G}}}
\newcommand{\BN}{{\mathbb{N}}}
\newcommand{\BZ}{{\mathbb{Z}}}
\newcommand{\Fm}{{\mathfrak{m}}}
\newcommand{\hilbert}[2]{{\Hilb^{#1}{#2}}}
\tikzset{commutative diagrams/arrow style=math font}
\tikzset{commutative diagrams/.cd,
mysymbol/.style={start anchor=center,end anchor=center,draw=none}}
\tikzset{
shift up/.style={
to path={([yshift=#1]\tikztostart.east) -- ([yshift=#1]\tikztotarget.west) \tikztonodes}
}
}
\theoremstyle{definition}
\newtheorem*{lemma*}{Lemma}
\newtheorem*{theorem*}{Theorem}
\newtheorem*{example*}{Example}
\newtheorem*{fact*}{Fact}
\newtheorem*{notation*}{Notation}
\newtheorem*{definition*}{Definition}
\newtheorem*{prop*}{Proposition}
\newtheorem*{remark*}{Remark}
\newtheorem*{corollary*}{Corollary}
\newtheorem*{conventions*}{Conventions}
\newtheorem{definition}{Definition}[section]
\newtheorem{example}[definition]{Example}
\newtheorem{conjecture}[definition]{Conjecture}
\newtheorem{convention}[definition]{Convention}
\newtheorem{notation}[definition]{Notation}
\newtheorem{remark}[definition]{Remark}
\newtheoremstyle{thm} 
        {3mm}
        {3mm}
        {\slshape}
        {0mm}
        {\bfseries}
        {.}
        {1mm}
        {}
\theoremstyle{thm}
\newtheorem{theorem}[definition]{Theorem}
\newtheorem{lemma}[definition]{Lemma}
\newtheorem{prop}[definition]{Proposition}
\newtheorem{thm}{Theorem}
\newtheorem*{Acknowledgments*}{Acknowledgments}
\crefname{notation}{Notation}{Notations}
\newcommand{\bh}{{\boldsymbol{h}}}
\title[Components of the nested Hilbert scheme of few points]{Components of the nested Hilbert scheme of few points} 
\author[M.~Graffeo]{Michele Graffeo} \address[M.~Graffeo]{SISSA\\ Via Bonomea 265\\ 34136\\ Trieste \\Italy  \\and INFN Sezione Trieste\\ Politecnico di Milano\\ Piazza Leonardo da Vinci 32\\ 20133 Milan\\ Italy}
\email{mgraffeo@sissa.it}
\author[P.~Lella]{Paolo Lella} \address[P.~Lella]{Dipartimento di Matematica\\ Politecnico di Milano\\ Piazza Leonardo da Vinci 32\\ 20133 Milan\\ Italy}
\email{paolo.lella@polimi.it}
\keywords{Hilbert schemes, 0-cycles}
\subjclass[2020]{14C05, 14B05, 14Q15}
\begin{document}

\begin{abstract}
We study the existence and the schematic structure of elementary components of the nested Hilbert scheme on a smooth quasi-projective variety. Precisely, we find a new lower bound for the existence of non-smoothable nestings of fat points on a smooth $n$-fold, for $n\geqslant 4$. Moreover, we implement a systematic method to build generically non-reduced elementary components. We also investigate the problem of irreducibility of the Hilbert scheme of points on a singular hypersurface of $\mathbb A^3$. Explicitly, we show that the Hilbert scheme of points on a  hypersurface of $\mathbb{A}^3$ having a singularity of multiplicity at least 5 admits elementary components. 
\end{abstract}

\maketitle

\section{Introduction}

The Hilbert scheme of points on a quasi-projective variety $X$ is the  scheme locally of finite type $\Hilb X$ representing the functor
\begin{equation}\label{eq:nestintro}
\begin{array}{rcccl}
\Sch_{\BC}^{\opp} &&\xrightarrow{\underline{\Hilb}{X}} && \SET \\
B && \longmapsto && \left\{\OZ^{(1)}\subset\cdots \subset \OZ^{(r)}\subset X\times B\ \middle\vert\ \begin{array}{l} r\in\BZ_{>0},\  \OZ^{(i)}\textnormal{~ closed,}\\\textnormal{$B$-flat,~$B$-finite}\end{array}\right\}
\end{array}
\end{equation}
Its existence was proven by Grothendieck in \cite{Grothendieck_Quot}, for $r=1$, and by Kleppe in general, see \cite{Kleppe}. By results of Hartshorne \cite{HartshorneConnect}, Fogarty \cite{FOGARTY} and Chea \cite{CHEACELLULAR}, the decomposition of   $\Hilb X$ into connected components is
\begin{equation}\label{eq:cc}
    \Hilb X= \coprod_{r\geqslant 1}\coprod_{\underline{d}\in\BZ^r_{>0}} \Hilb^{\underline{d}} X,
\end{equation}
where, for a given sequence $\underline{d}=(d_1,\ldots,d_r)\in\BZ^r_{>0}$, $\Hilb^{\underline{d}} X$ is the  locus whose $B$-points correspond to nestings of the form \eqref{eq:nestintro} with $\length_B\OZ^{(i)}=d_i$, for $i=1,\ldots,r$.\footnote{Note that $\Hilb^{\underline{d}} X$ is non-empty if and only if $\underline{d}$ is a non-decreasing sequence of non-negative numbers. For this reason, many factors of \eqref{eq:cc} are empty, and the term \lq\lq connected component\rq\rq~is used with slight abuse of notation.}

In this paper, we investigate two aspects of the geometry of $\Hilb X$ when $X$ is smooth, as well as one aspect in the singular setting. Specifically, when $X$ is smooth,  we present an infinite family of non-smoothable nestings of fat points, that we interpret as \lq\lq minimal\rq\rq, and we provide a method for constructing generically non-reduced elementary components starting from a generically reduced one. On the other hand, we study the reducibility of $\Hilb^d X$ for $d\in \BZ_{>0}$ and $X$ a singular surface.

An irreducible component of $\Hilb X$ is elementary if its  closed points correspond to nestings of fat points, i.e.~spectra of local Artinian $\BC$-algebras of finite type. They are considered the building blocks of the Hilbert scheme of points, since every other component is generically étale-locally a product of elementary ones. For this reason, they play a central role in the study of the geometry of Hilbert schemes of points; see \cite{ERMANVELASCO,UPDATES,2step,SomeElementary,MoreElementary,Iarrob,ELEMENTARY,Satrianostaal,GALOIS,Sha90} for examples of elementary components.

For a smooth quasi-projective variety $X$, since the problems we address are local in nature, they can be stated in the case $X=\BA^n$, which simplifies the notation and allows us to reason in terms of ideals of the polynomial ring in $n$ variables. 

\smallskip

\subsubsection*{Short nestings} The search for elementary components has received many contributions over the years. However, the known results are scattered and only in a few cases are explicit infinite families presented. 
It was shown in \cite{ERMANVELASCO} that, for $r=1$, the smallest possible value of $d_1\in\BZ_{>0}$, for which there exists an elementary component of $\Hilb^{d_1}X$ parametrising schemes of maximal embedding dimension is  $d_1=\dim X+4$. 
Our first main result is that, by allowing nestings, this phenomenon occurs sooner.
 
    

\begin{thm}\label{THMINTRO:A}
   Let $R$ be the polynomial ring in $n\geqslant 4$ variables, and let $2\leqslant s \leqslant n-2$ be an integer. Consider 
   \[
   I^{(1)}=(x_1,\ldots,x_s)^2+(x_{s+1},\ldots,x_n),\mbox{ and } I^{(2)}=(H_2)+\Fm^3,
   \]
    where $H_2\subset R_2$ is a generic linear subspace of codimension $2$ of the space of quadratic forms.  Then, the point $[I^{(1)}\supset I^{(2)}]\in\Hilb^{1+s,n+3}\BA^n$ is non-smoothable.  
\end{thm}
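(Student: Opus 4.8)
The plan is to deduce non-smoothability by a tangent space computation, showing that $[I^{(1)}\supset I^{(2)}]$ is a \emph{smooth} point of $\Hilb^{1+s,n+3}\BA^n$ whose unique component is elementary, hence distinct from the smoothable one. First I would record the numerics: $R/I^{(1)}$ has Hilbert function $(1,s)$, so $\length R/I^{(1)}=1+s$, while for a generic $H_2$ the cubics are redundant, giving $I^{(2)}=(H_2)$ with Hilbert function $(1,n,2)$ and $\length R/I^{(2)}=n+3$; the inclusion $I^{(2)}\subset I^{(1)}$ is automatic because $\Fm^2\subset I^{(1)}$ forces $(I^{(1)})_2=R_2\supset H_2$. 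Both ideals being homogeneous, the nesting is a fixed point of the scaling $\BG_m$-action, so every computation can be carried out weight by weight. I would use the description of the tangent space as the fibre product
\[
T=\underbrace{\Hom_R(I^{(1)},R/I^{(1)})}_{=:B}\times_{\underbrace{\Hom_R(I^{(2)},R/I^{(1)})}_{=:D}}\underbrace{\Hom_R(I^{(2)},R/I^{(2)})}_{=:C},
\]
where $\rho\colon B\to D$ is restriction along $I^{(2)}\hookrightarrow I^{(1)}$ and $\sigma\colon C\to D$ is post-composition with $R/I^{(2)}\onto R/I^{(1)}$.

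Next I would exhibit the candidate component. Writing $I^{(1)}=(W)+\Fm^2$ with $W\in\Gr(n-s,R_1)$, the condition $(I^{(1)})_2=R_2$ together with $\Fm^3\subset\Fm^2$ shows that the nesting $I^{(2)}\subset I^{(1)}$ holds for \emph{every} pair $(W,H_2)$. Letting the common support point vary over $\BA^n$ yields an irreducible locally closed family
\[
E\cong\BA^n\times\Gr(n-s,R_1)\times\Gr\big(\tbinom{n+1}{2}-2,\,R_2\big),\qquad \dim E=n+s(n-s)+2\tbinom{n+1}{2}-4,
\]
all of whose members are supported at a single point. The goal is then to prove $\dim T=\dim E$. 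Since the targets $R/I^{(i)}$ vanish in degrees $\ge 3$, every generator maps to $0$ in positive weight, so $B_{\ge1}=C_{\ge1}=0$ and $T_{\ge1}=0$; using the genericity of $H_2$ (so that the constant-valued homomorphisms on the quadric generators are killed by the linear syzygies) one gets $B_{-2}=C_{-2}=0$, hence $T_{\le-2}=0$. In weight $0$ one has $D_0=0$, so the fibre product splits as a direct sum: $T_0=B_0\oplus C_0=\Hom(W,R_1/W)\oplus\Hom(H_2,R_2/H_2)$, of dimension $s(n-s)+2\binom{n+1}{2}-4$, matching exactly the two Grassmannian directions of $E$.

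The heart of the argument, and the step I expect to be the main obstacle, is the weight $-1$ part, where I must show $\dim T_{-1}=n$, i.e.\ that the only weight $-1$ first-order deformations compatible with the nesting are the $n$ translations of the common support. The difficulty is that $C_{-1}=\Hom(I^{(2)},R/I^{(2)})_{-1}$ — the deformations smearing $\OZ^{(2)}$ into a non-homogeneous scheme — is by itself strictly larger than $n$, so the cut-down must come entirely from the fibre-product compatibility $\rho(\beta)=\sigma(\gamma)$ inside $D_{-1}=\Hom(I^{(2)},R/I^{(1)})_{-1}$. Concretely this reduces to a maximal-rank statement for the constraint map assembled from the linear syzygies of the generic codimension-$2$ space of quadrics $H_2$, yielding
\[
\dim T_{-1}=\dim B_{-1}+\dim C_{-1}-\dim\big(\rho(B_{-1})+\sigma(C_{-1})\big)=n;
\]
controlling the syzygies of a generic $H_2$ is exactly where the genericity hypothesis is indispensable. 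Granting this, $\dim T=\dim E$, so the nesting is a smooth point of $\Hilb^{1+s,n+3}\BA^n$ and lies on the unique component $\overline E$; as the generic member of $\overline E$ is supported at one point, $\overline E$ is not the smoothable component (whose generic member is $n+3$ distinct reduced points), and the nesting is non-smoothable. As a sanity check, whenever $s(n-s)<n+4$ one in fact gets $\dim T=\dim E<n(n+3)$, which already precludes smoothability since every point of the smoothable locus has tangent dimension at least $n(n+3)$.
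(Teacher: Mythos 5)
Your reduction of everything to the single claim $\dim T_{-1}=n$ correctly identifies where the difficulty lies, but the proposal never proves it, and this is not a routine verification that can be deferred: together with your claim $T_{\leqslant -2}=0$, it is exactly the statement that these nestings have trivial negative tangents, which the paper itself does not know how to prove in general. It appears there as \Cref{conj:TNT} and is confirmed only for $n\leqslant 15$ by machine computation (\Cref{ex:dim4}, \Cref{prop:checked}); no argument is known that controls the linear syzygies of a generic codimension-two space of quadrics well enough to give the maximal-rank statement you need for all $n$. Since your non-smoothability conclusion is derived entirely from $\dim T=\dim E$ -- including your final ``sanity check'', which still presupposes that equality -- the proposal as written establishes the theorem for no value of $n$ beyond what a computer check already gives.

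The paper's proof goes a genuinely different way and avoids tangent-space computations altogether in large dimension. In \Cref{prop:non-smoothable} one compares the dimension of the locus of such nestings, $\dim \left(H^n_{\underline{\bh}}\times\BA^n\right)=n^2+2n-4+s(n-s)$ (a trivial $\Gr(s,R_1)$-bundle over $H^n_{(1,n,2)}\cong\Gr(2,R_2)$, times the $\BA^n$ of translations), with $\dim\Hilb^{s+1,n+3}_{\sm}\BA^n=n(n+3)$; the difference is $n+4-s(n-s)$, which is non-positive for all $2\leqslant s\leqslant n-2$ as soon as $n\geqslant 8$. An irreducible locally closed stratum of dimension at least that of the smoothable component cannot be contained in it, so its generic point is non-smoothable; \Cref{thm:mainAtext} then only needs that the generic nesting with Hilbert functions $((1,s),(1,n,2))$ has the stated form, which follows from semicontinuity of graded Betti numbers applied to the explicit ideal $\Delta_n+J_n$. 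The finitely many remaining cases $4\leqslant n\leqslant 8$ are disposed of by the same computer verification of TNT that your argument would require in every dimension. Note that the two regimes are exactly complementary: your tangent-dimension bound $\dim T<n(n+3)$ could only ever apply when $s(n-s)<n+4$, which is precisely where the paper's dimension count fails and where it, too, must fall back on machine computation -- so even a repaired version of your argument would not remove the computational step; it would only relocate it.
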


It is worth mentioning that in all the cases we have checked, the nestings presented in \Cref{THMINTRO:A} lie on a generically reduced elementary component. We formulate this in \Cref{conj:TNT} and confirm it for $n\leqslant 15$  in \Cref{prop:checked}.

\smallskip

\subsubsection*{Non-reduced elementary components} The second question we address concerns the schematic structure of elementary components on a smooth quasi-projective variety. Our interest stems from the fact that although the TNT property is a very powerful tool for detecting elementary components, it is only suitable for reduced ones, and no systematic method has been developed for dealing with generically singular components, \cite{UPDATES,jelisiejewnonred,pathologies,oszer,13PUNTI}. For this reason, the generically non-reduced elementary components of $\Hilb \BA^n$ are almost invisible to a first  inspection, and more effort is required to find them. In our second main result, we give a recipe for building a generically non-reduced elementary component starting from a reduced one.

\begin{thm}\label{THMINTRO:B}
     Let $V\subset \Hilb^{\underline{d}} \BA^n$ be a generically reduced  elementary component. Assume that there is at least a point $[({I}^{(i)})_{i=1}^r]\in V$ and some $k\in\BZ_{>0}$ such that $I^{(j)}\supsetneq \Fm^k\supsetneq I^{(j+1)} $, for some $1\leqslant j\leqslant r$, where we assume $I^{(r+1)}=0$ by convention. Put 
    \[
    \widetilde{\underline {d}}=\left(d_1,\ldots, d_j,\binom{n+k-1}{n},d_{j+1},\ldots,d_r\right),
    \]  
    Then, there is a generically non-reduced elementary component $\widetilde V\subset \Hilb^{\widetilde{\underline{d}}}\BA^n$ such that  
$ \widetilde V_{\red}\cong   V_{\red} . $
\end{thm}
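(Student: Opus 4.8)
The plan is to realise $\widetilde V$ as the image of a canonical insertion section of the forgetful morphism, and then to detect non-reducedness by the nonvanishing of a single $\Hom$. Throughout I work on $X=\BA^n$, $R=\BC[x_1,\dots,x_n]$, and identify a point of $\Hilb^{\underline d}\BA^n$ with a chain of ideals $I^{(1)}\supseteq\cdots\supseteq I^{(r)}$ of colengths $d_1,\dots,d_r$, all $\Fm_p$-primary for a single (moving) point $p$. Let $\rho\colon\Hilb^{\widetilde{\underline d}}\BA^n\to\Hilb^{\underline d}\BA^n$ be the morphism forgetting the $(j{+}1)$-st member of the nesting. Over $V_{\red}$ the universal nesting has a reduced support defining a morphism $\sigma\colon V_{\red}\to\BA^n$, $v\mapsto p(v)$, and $\Fm_{\sigma}^k$, the pulled-back $k$-th infinitesimal neighbourhood, is a flat family of colength $\binom{n+k-1}{n}$. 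I would first show that the sandwich $I^{(j)}\supseteq\Fm_p^k\supseteq I^{(j+1)}$ holds on all of $V_{\red}$: both inclusions are closed conditions, and on the elementary component $V$ the Hilbert functions of the members of the universal nesting are generically constant, so the hypothesis — which we may take to hold at a general point — propagates to the whole of $V_{\red}$. Inserting $\Fm_\sigma^k$ then defines a section $\iota\colon V_{\red}\to\Hilb^{\widetilde{\underline d}}\BA^n$ with $\rho\circ\iota=\id_{V_{\red}}$.

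Let $\widetilde V$ be the irreducible component of $\Hilb^{\widetilde{\underline d}}\BA^n$ whose reduction is $\iota(V_{\red})$. Since $\rho\circ\iota=\id$ and $\rho(\widetilde V_{\red})\subseteq V$ (because $V$ is a component and $\rho$ is continuous), the map $\iota$ identifies $\widetilde V_{\red}\cong V_{\red}$; in particular $\dim\widetilde V=\dim V$. Insertion of the single-support scheme $\Spec R/\Fm_p^k$ keeps every member of the nesting supported at $p$, so the general point of $\widetilde V$ is a nesting of fat points and $\widetilde V$ is elementary. That $\widetilde V_{\red}$ is a genuine component (not contained in a larger irreducible locus) follows because any irreducible subset of $(\Hilb^{\widetilde{\underline d}}\BA^n)_{\red}$ containing it maps under $\rho$ into the component $V$ and meets its fibre over a general point in the single reduced point cut out by the rigid choice $\Fm_p^k$.

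For the non-reducedness I would use the nested $\Hom$ description of the tangent space: at the nesting $\widetilde Z_0=\iota(Z_0)$ a tangent vector is a tuple $(\phi_a)_a$, $\phi_a\in\Hom_R(I^{(a)},R/I^{(a)})$, together with $\psi\in\Hom_R(\Fm^k,R/\Fm^k)$, subject to the commuting-square compatibilities across each inclusion. The differential $d\rho$ forgets $\psi$, and a direct check identifies its kernel with
\[
\ker d\rho\;\cong\;\Hom_R\!\left(\Fm^k/I^{(j+1)},\,I^{(j)}/\Fm^k\right),
\]
namely those $\psi$ whose image lies in $I^{(j)}/\Fm^k=\ker(R/\Fm^k\twoheadrightarrow R/I^{(j)})$ and which vanish on $I^{(j+1)}$. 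Both modules are nonzero by the strictness of the sandwich, and composing the projection of $\Fm^k/I^{(j+1)}$ onto a one-dimensional top quotient with the inclusion of a socle generator of $I^{(j)}/\Fm^k$ produces a nonzero element, so $\ker d\rho\neq 0$. Taking $Z_0$ general in the generically reduced $V$, the space $T_{\widetilde Z_0}\widetilde V_{\red}$ has dimension $\dim V$ and, since $d\rho\circ d\iota=\id$, meets $\ker d\rho$ only in $0$. Therefore
\[
\dim_{\BC}T_{\widetilde Z_0}\Hilb^{\widetilde{\underline d}}\BA^n\;\geq\;\dim V+\dim_{\BC}\ker d\rho\;>\;\dim V=\dim\widetilde V,
\]
so $\Hilb^{\widetilde{\underline d}}\BA^n$ is non-reduced at the general point of $\widetilde V$; that is, $\widetilde V$ is a generically non-reduced elementary component with $\widetilde V_{\red}\cong V_{\red}$.

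The main obstacle is the construction in the first paragraph: proving that the canonical insertion is defined over all of $V_{\red}$ (equivalently, that the closed sandwich locus is the whole component) and that $\Fm_\sigma^k$ fits flatly into the nesting, so that $\iota$ is a morphism and $\widetilde V_{\red}\cong V_{\red}$. Once this is in place the non-reducedness is essentially automatic, coming from the single nonvanishing $\Hom_R(\Fm^k/I^{(j+1)},I^{(j)}/\Fm^k)$: the inserted ideal $\Fm^k$ is so singular a point of $\Hilb^{\binom{n+k-1}{n}}\BA^n$ that it carries tangent directions transverse to $\widetilde V_{\red}$ which the nesting constraint fails to kill.
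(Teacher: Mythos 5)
Your core mechanism is the same as the paper's: insert $\Fm_p^k$, compare with the forgetful map, and harvest the extra tangent vectors coming from $\Hom_R\bigl(\Fm^k/I^{(j+1)},I^{(j)}/\Fm^k\bigr)$. Your identification of this $\Hom$ with $\ker d\rho$ is correct, and it recovers the content of the paper's \Cref{lemma:tech3} without any TNT or Bia{\l}ynicki-Birula input (the paper instead gets it from TNT at a general $[\underline I]\in V$, available via \Cref{thm:tnt per nested}); the top-to-socle nonvanishing argument is also right, and the closing step (tangent dimension at a general point of $\widetilde V$ exceeds $\dim\widetilde V$, hence generic non-reducedness) is sound \emph{provided} $\iota(V_{\red})$ really is the reduction of an irreducible component. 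The packaging differs from the paper, which realises $\widetilde V$ as the closure of the Hilbert--Samuel stratum $H^n_{\widetilde{\underline{\bh}}}\times\BA^n$ and identifies it with $H^n_{\underline{\bh}}\times\BA^n$ via insertion/forgetting inside the Bia{\l}ynicki-Birula framework.

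The genuine gap is the componenthood of $\widetilde V$, and here your argument is not just incomplete but internally inconsistent. You assert that any irreducible $W\supseteq\iota(V_{\red})$ meets the fibre of $\rho$ over a general $[\underline I]\in V$ ``in the single reduced point cut out by the rigid choice $\Fm_p^k$''. But the scheme-theoretic fibre of $\rho$ over $[\underline I]$ is the locus of \emph{all} ideals $J$ with $I^{(j)}\supseteq J\supseteq I^{(j+1)}$ and $\colen J=\binom{n+k-1}{n}$, and its tangent space at $\Fm_p^k$ is exactly $\ker d\rho=\Hom_R\bigl(\Fm^k/I^{(j+1)},I^{(j)}/\Fm^k\bigr)$ --- the very space you later prove to be nonzero. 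So the fibre is never a single \emph{reduced} point; if your rigidity claim were true, your non-reducedness mechanism would vanish. Nor is the fibre a single point set-theoretically in general: for $n=2$, $I^{(j)}=\Fm$, $I^{(j+1)}=\Fm^3$, $k=2$, the curve $J_t=(tx+y^2,\,x^2,\,xy,\,y^3)$ lies in the fibre and passes through $\Fm^2$ at $t=0$. What your proof actually requires is that $\Fm_p^k$ be an \emph{isolated} (necessarily non-reduced) point of the fibre for a general $[\underline I]\in V$, i.e.\ that all those first-order deformations are obstructed: with isolation, the fibre-dimension theorem forces any irreducible $W\supseteq\iota(V_{\red})$ dominating $V$ to satisfy $\dim W\leqslant\dim V$, giving componenthood; without it, the closure of the locus $\{J\neq\Fm_p^k\}$ inside $\rho^{-1}(V)$ is an irreducible set strictly containing $\iota(V_{\red})$, so $\widetilde V$ is not a component and your displayed tangent-space inequality proves nothing. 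This obstruction statement cannot be extracted from tangent-space considerations, and you give no argument for it; it is precisely the step the paper covers by working with the strata closure rather than with your ad hoc fibre claim.

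A secondary gap: the theorem's hypothesis puts the sandwich at \emph{some} point of $V$, while your construction of $\iota$ needs it at the \emph{general} point. The two inclusions $\Fm_p^k\subseteq I^{(j)}$ and $I^{(j+1)}\subseteq\Fm_p^k$ are closed conditions (they amount to maximality of truncated sums of the Hilbert functions), so they pass from general points to their specialisations and not conversely; your parenthetical ``which we may take to hold at a general point'' is therefore doing unacknowledged work. The paper sidesteps this by encoding the sandwich directly into the Hilbert function vector $\underline{\bh}$ attached to the dense stratum whose closure is $V$.
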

  
 Remarkably, in \Cref{cor:thmB-2step-homogeneous} we prove a much stronger result. However, we have to restrict ourselves to nestings of homogeneous ideals.

\smallskip

\subsubsection*{The Hilbert scheme of singular surfaces} The third and last aspect of the geometry of the Hilbert scheme  of points we study is the irreducibility of $\Hilb ^d X$, for $d\in\BZ_{>0}$, and $X$ singular variety. Since we already know that for $X$ smooth, and $\dim X \geqslant 3$, the scheme $\Hilb^dX$ is reducible for $d\gg 0$, the question is interesting in dimension 1 and 2. The curves case, has been largely studied in the literature, see \cite{ágoston2023analyticlatticecohomologyisolated,AA2,AA3,AA1,AA5,luan2023irreduciblecomponentshilbertscheme,AA4,Angel} and reference therein. In this case, reducibility has been understood. Indeed, if $C$ is a curve then $\Hilb^2C$ is irreducible if and only if $C$ has only planar singularities. For this reason we focus on the surface case. Although it is possible to build many examples of reducible Hilbert schemes when $S$ has singularities of embedding dimension strictly greater than 3, see \Cref{ex:RNC}, the same is not true for $S\subset \BA^3$ hypersurface. It was proven independently  in \cite{soren} and \cite{Zheng} that, as soon as $S$ has only ADE singularities, $\Hilb^dS$ is irreducible for all $d> 0$. However, as we show in \Cref{ex:iarrosurf}, as a consequence of the results in the celebrated paper \cite{IARRO} by Iarrobino, if $S$ has a singularity of multiplicity at least 8, the scheme $\Hilb^{78}S$ is reducible. Building upon the theory of 2-step ideals developed in \cite{2step} we prove our main result in this direction.

\begin{thm}\label{THMINTRO:C}
Let $S\subset \BA^3$ be a hypersurface. If $S$ has a singular point of multiplicity at least 5, then $\Hilb^d S $ is reducible for $d\geqslant 22$. 
\end{thm}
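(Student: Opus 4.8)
The plan is to reduce the statement about an arbitrary hypersurface $S\subset\BA^3$ with a singular point of high multiplicity to a concrete question about ideals supported at the origin, and then to exhibit an explicit elementary (or at least extra) component of $\Hilb^d S$ that forces reducibility for every $d\geqslant 22$. Since the question is local at the singular point, I would first localize: assume the singular point is the origin $0\in S$, and let $\mathfrak m_S\subset\CO_{S,0}$ be its maximal ideal. The hypothesis $\mult_0 S\geqslant 5$ means the tangent cone has multiplicity $\geqslant 5$, equivalently the defining equation $f$ of $S$ has order $\geqslant 5$ at the origin, so $\dim_\BC \mathfrak m_S^k/\mathfrak m_S^{k+1}$ grows at least as fast as the corresponding graded piece for $\BA^3$ up to degree $4$. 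The key point is that near such a singularity $S$ looks, to high enough order, like $\BA^3$ itself, so the pathological behaviour of $\Hilb\BA^3$ can be imported.

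**Producing the extra component.**

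The heart of the argument should come from the theory of $2$-step ideals developed in \cite{2step} (the companion reference), which the excerpt explicitly says we build upon. I would use that theory to locate a family of length-$d$ subschemes of $\BA^3$ concentrated at the origin and cut out by $2$-step ideals $\Fm^{k+1}\subset I\subset\Fm^k$, whose deformations within $\Hilb^d\BA^3$ are obstructed in a way that produces a component distinct from the smoothable (punctual) one. Concretely, for a suitable degree $k$ one counts parameters: the dimension of the family of such ideals, computed via the Grassmannian of subspaces of $R_k/R_{k+1}$ that are stable under multiplication, must exceed the dimension of the smoothable component's intersection with the punctual locus. This is exactly the Iarrobino-type dimension count alluded to in \Cref{ex:iarrosurf}, and the role of $\mult_0 S\geqslant 5$ is to guarantee that enough of these $2$-step ideals of $\BA^3$ actually \emph{descend} to ideals of $\CO_{S,0}$: since $f$ has order $\geqslant 5$, the equation $f$ lies in $\Fm^5$, so for the relevant range of degrees $k$ the containment $(f)\subset I$ holds automatically and the $\BA^3$-ideal becomes an $S$-ideal without changing its colength or its deformation theory at the relevant order. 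The threshold $d\geqslant 22$ and the multiplicity bound $5$ should emerge precisely as the smallest values making the parameter count for the extra family beat the smoothable dimension; I would determine the optimal $k$ (presumably $k=2$ or $k=3$) and check the inequality at the boundary case.

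**Concluding and padding up to all larger $d$.**

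Once reducibility of $\Hilb^{d_0}S$ is established for the critical value $d_0=22$, extending to all $d\geqslant 22$ is routine: one takes the disjoint union of the extra-component subscheme with a generic reduced $0$-cycle of length $d-d_0$ supported away from the singularity, where $S$ is smooth. This adds a smooth factor of the expected dimension and preserves the existence of two components of different dimensions (or with different generic embedding dimension), so irreducibility fails for every $d\geqslant 22$. I would phrase this via the standard étale-local product structure of $\Hilb S$ away from the singular point, noting that away from $0$ the scheme $S$ is smooth of dimension $2$ and contributes only the smoothable stratum.

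**Main obstacle.**

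The hard part will be the dimension count that simultaneously pins down the two numerical thresholds. Establishing that a $2$-step family genuinely sits on a separate component (rather than in the closure of the smoothable locus) requires an obstruction computation: I would need to show the tangent space $\Hom_{\CO_S}(I,\CO_S/I)$ to $\Hilb^d S$ at a generic point of the candidate family is strictly smaller than the dimension predicted by treating it as smoothable, or else exhibit an explicit obstructed first-order deformation. Controlling this tangent space over $\CO_S$ rather than over $R$, and confirming that passing from $\BA^3$ to the singular $S$ does not accidentally smooth the family out (which is exactly where $\mult_0\geqslant 5$ must be used sharply), is the delicate technical core; everything else is bookkeeping and reduction to the local model.
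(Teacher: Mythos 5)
Your overall architecture is the same as the paper's: localise at the singular point, use the multiplicity hypothesis to force $f_S\in\Fm^5\subset I$ so that punctual ideals of $\BA^3$ descend to $S$, run an Iarrobino-style dimension count against the smoothable component, and pad with reduced points away from the singularity for larger $d$. However, there is a genuine gap exactly at the step you postpone. The family you actually describe cannot reach the threshold $d=22$: although you invoke $2$-step ideals, you write $\Fm^{k+1}\subset I\subset\Fm^k$ and parametrise by a Grassmannian of subspaces of a single graded piece $R_k$ — these are homogeneous \emph{one-step} ideals determined by $I_k\subset R_k$. Running the count for those: such an ideal has colength $d=\binom{k+2}{3}+c$ where $c=\dim_{\BC}(R/I)_k$, the family is a Grassmannian of dimension $\bigl(\binom{k+2}{2}-c\bigr)c$, and on a surface one needs this to be at least $2d$. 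For $k=3$ the inequality $(10-c)c\geqslant 20+2c$ has no integer solution, and for $k=4$ (the smallest order whose ideals contain $\Fm^5$, as the multiplicity-$5$ hypothesis requires) one needs $5\leqslant c\leqslant 8$, giving $d\geqslant 25$. So the family you specify proves reducibility only for $d\geqslant 25$. Reaching $22$ requires genuinely two-step ideals, with generators in two consecutive degrees: the paper takes $\bh_{R/I}=(1,3,6,8,4)$, i.e.\ $\Fm^5\subset I\subset\Fm^3$ with $\bh_I(3)=2$ and $\bh_I(4)=11$, and the dimension formula of \Cref{prop:dim2step} gives
\[
\dim H^3_{\bh}\;=\;2\cdot 8+\bigl(11-2\cdot 2\bigr)\cdot 4\;=\;44\;=\;2\cdot 22\;=\;\dim \Hilb^{22}_{\sm}S,
\]
which is what makes the boundary case work. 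Your mechanism for using the multiplicity ($f_S\in\Fm^5\subset I$) and your padding argument are exactly the paper's, but the deferred verification is the entire content of the theorem, and with the family as you wrote it, it fails at $22$.

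Separately, your ``main obstacle'' paragraph asks for more than is needed. No obstruction computation and no control of $\Hom_{\CO_S}(I,\CO_S/I)$ over the singular surface is required. Once one exhibits a family of punctual subschemes of $S$ whose dimension is at least $\dim\Hilb^{d}_{\sm}S=2d$, containment in the smoothable component is automatically impossible: for $d\geqslant 2$ every punctual subscheme is non-reduced, the non-reduced locus is a proper closed subset of the irreducible component $\Hilb^{d}_{\sm}S$, and a proper closed subset has dimension at most $2d-1$. This is also the clean form of your proposed comparison with ``the smoothable component's intersection with the punctual locus'', and it avoids having to estimate that intersection at all; it is precisely how the paper (implicitly) concludes from the equality $44=44$.
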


\begin{remark}
    All singularities of type ADE have multiplicity 2. Therefore, the problem of characterising the singular hypersurfaces $X\subset\BA^3$ for which $\Hilb^dX$ is reducible, for $d$ large enough, remains open for hypersurface singularities of multiplicity 4,3 and some of multiplicity 2. A natural question that arises is whether this could be another way of characterising ADE singularities, see \cite{zbMATH03650712}. Note that this question is closely related to  \textbf{Problem XII} in the list of open problems proposed in \cite{JJ-Hilb-open-problems}.
\end{remark}

\subsection{Organisation of the content}
In \Cref{sec2} we introduce the basic tools needed to prove our main results. Precisely, in 
\Cref{subsec:BB} we introduce the notion of Bia{\l{}}ynicki-Birula decomposition and we relate it in  \Cref{subsec:HS} with that of Hilbert--Samuel stratification. Then, in \Cref{subsec:TNT} we recall the definition and the main consequences of the TNT property, and in \Cref{subsec 2-step} we recall the definition of 2-step ideals without linear syzygies and their number of moduli.  Finally, in Sections \ref{sec3}, \ref{sec4} and \ref{sec5} we prove Theorems \ref{THMINTRO:A}, \ref{THMINTRO:B} and \ref{THMINTRO:C} respectively.

\subsection*{Acknowledgments}We thank Franco Giovenzana, Luca Giovenzana, Alessio Sammartano and   Klemen \v{S}ivic for interesting conversations.  The second author was supported by the grant PRIN 2022K48YYP {\em Unirationality, Hilbert schemes, and singularities}. Both authors are members of the GNSAGA - INdAM.

\section{Preliminaries} \label{sec2}

In this section, we introduce the main tools that we need to prove Theorems \ref{THMINTRO:A}, \ref{THMINTRO:B} and \ref{THMINTRO:C} from the introduction. The central topics are the Bia{\l{}}ynicki-Birula decomposition in \Cref{subsec:BB}, the Hilbert--Samuel stratification in \Cref{subsec:HS}, the TNT property in \Cref{subsec:TNT} and 2-step ideals without linear syzygies in \Cref{subsec 2-step}. 

\begin{convention} 
        In order to ease the notation, for any vector $\underline{d}\in \BZ^r$ we denote by $d_i$, for $i=1,\ldots,r$, its entries. Moreover, if $\underline{d}\in \BZ^r_{>0}$ is a non-decreasing sequence of positive integers, by $\underline{d}$-nesting (or simply $r$-nesting) $\underline{Z}$ in $X$ we mean that $\underline{Z}=(Z^{(1)},\ldots,Z^{(r)})$, where $Z^{(1)}\subset\cdots \subset Z^{(r)}\subset X$ are closed zero-dimensional subschemes and $\length Z^{(i)}=d_i$, for $i=1,\ldots,r$. Finally, the support of the nesting $\underline Z$ is the set-theoretic support of the scheme $Z^{(r)}$, i.e.~$\Supp \underline{Z}= \Spec( \OO_X /\sqrt{\OI_{Z^{(r)}}})$.

         Since the questions we address in \Cref{sec3,sec4} are local in nature and the results therein concern  smooth quasi-projective varieties, it is not restrictive to put $X\cong \BA^n$ and hence to work up to étale covers. In virtue of the fact that the Hilbert scheme represents the Hilbert functor, we denote points of the $\underline{d}$-nested Hilbert scheme $\hilbert{\underline{d}}{\BA^n}$ by $[Z^{(1)}\subset \cdots \subset Z^{(r)}]$ or $[I_{Z^{(1)}}\supset \cdots \supset I_{Z^{(r)}}]$ referring to both as $\underline{d}$-nestings (or simply $r$-nestings).

         Finally, we denote by $R=\BC[x_1,\ldots,x_n]$ the polynomial ring in $n$ variables with complex coefficients, and by $\Fm=(x_1,\ldots,x_n)\subset R$ the maximal ideal generated by the variables, without specifying the number of variables in the notation and taking care of not making confusion. 
\end{convention}

\subsection{The Bia{\l{}}ynicki-Birula decomposition}\label{subsec:BB}
 
Consider  the diagonal action of the torus $\mathbb G_m=\Spec \BC[s,s^{-1}]$ on $ \hilbert{\underline{d}}{\BA^n} $ induced by homotheties of $\BA^n$. Then, the Bia{\l{}}ynicki-Birula decomposition is the  scheme locally of finite type $\hilbert{+}{\BA^n}$ representing the following functor
\[ 
   \left( \underline{\Hilb}^{+}\BA^n\right)(B) = \Set{\varphi\colon \overline{\mathbb G}_m \times B \rightarrow \hilbert{}{\BA^n} |   \varphi \mbox{ is $\mathbb G_m$-equivariant}} ,
\]
where by convention $\overline{\mathbb G}_m= \Spec \BC[ s^{-1}]$. Every $B$-point of $\Hilb^+\BA^n$ corresponds to a $(\overline{\BG}_m\times B)$-point of $\Hilb\BA^n$, and this natural transformation of functors induces an injective morphism $\Hilb^+\BA^n\hookrightarrow\Hilb\BA^n$. Set-theoretically its image agrees with  the subset of the nested Hilbert scheme parametrising fat nestings supported at the origin $0\in\BA^n$, i.e.~nestings of $\Fm$-primary ideals. In what follows, we make intensive use of this correspondence by identifying points of $\Hilb^+\BA^n$ with points of $\Hilb\BA^n$.  
According to the notation adopted above, we put
\[
\Hilb^+\BA^n=\coprod_{r\geqslant1}\coprod_{\underline d\in\BZ^r_{>0}} \Hilb^{\underline d,+} \BA^n,
\]
where $\Hilb^{\underline d,+} \BA^n=\Hilb^{\underline d} \BA^n\cap \Hilb^{+} \BA^n$.

\begin{remark}
    The Bia{\l{}}ynicki-Birula decomposition has been proven to be a powerful tool in the study of the geometry of the Hilbert and Quot schemes of points. For instance, in \cite{CHEACELLULAR,ESHilb,nestedsurf,MOTIVES}, it was used to compute a large number of motivic invariants of these schemes.
\end{remark}

Consider a $\Fm$-primary ideal $I\subset R$, and put 
\[ (I)_{\geqslant k} = I\cap \Fm^k\qquad\mbox{ and }\qquad (R/I)_{\geqslant k}= (\Fm^k + I)/I \subset R/I. \]

\begin{definition}\label{def:negativetangents}
Let $[\underline I] \in\Hilb^+\BA^n$ be a point.
Then, the \textit{non-negative part of the tangent space} $\mathsf T_{ [\underline{I}]}  \hilbert{}{\BA^n}$  is the following vector subspace
\[
\mathsf T_{ [\underline{I}]}^{\geqslant0}  \hilbert{}{\BA^n} = \left\{\varphi\in\mathsf T_{ [\underline{I}]} \hilbert{}{\BA^n}\ \middle\vert\ \varphi\big((I^{{(i)}})_{\geqslant k}\big) \subset (R/I^{{(i)}})_{\geqslant k}\mbox{ for all }k \in\BN\mbox{ and for }i=1,\ldots,r\right\}. 
\]
While, the \textit{negative tangent space} at $[\underline{I}]\in \hilbert{}{\BA^n}$ is
\[
\mathsf T_{ [\underline{I}]}^{< 0}  \hilbert{}{\BA^n}=\frac{ \mathsf T_{ [\underline{I}]}  \hilbert{}{\BA^n}}{ \mathsf T_{ [\underline{I}]}^{\geqslant0}  \hilbert{}{\BA^n}}.
\]
\end{definition}
The following proposition from \cite{ELEMENTARY} expresses the tangent space   $\mathsf T_{ [\underline{I}]}  \Hilb^{+} \BA^n$  in  terms of the non-negative tangent space at $[\underline{I}]\in \hilbert{}{\BA^n}$. 
\begin{prop}[{\cite[Theorem 4.11]{ELEMENTARY}}]\label{rem:nonneg-punctual}
Let $[\underline{I}]\in \hilbert{+}{\BA^n}$ be a fat nesting. Then, we have
\[
    \mathsf T_{ [\underline{I}]}  \hilbert{+}{\BA^n}= \mathsf T_{ [\underline{I}]}^{\geqslant0}  \hilbert{}{\BA^n} .
\]    
\end{prop}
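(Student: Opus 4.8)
The plan is to unwind the functor defining $\Hilb^+\BA^n$ at the level of dual numbers and to match the resulting space with the filtration condition of \Cref{def:negativetangents}. A tangent vector to $\Hilb^+\BA^n$ at $[\underline I]$ is a $\mathbb{G}_m$-equivariant morphism $\Phi\colon\overline{\mathbb{G}}_m\times\Spec\BC[\epsilon]\to\Hilb\BA^n$ reducing modulo $\epsilon$ to the orbit–closure arc $\varphi_0\colon s\mapsto s\cdot[\underline I]$, whose value at the boundary $s=\infty$ is the homogeneous fixed nesting $[\In\underline I]$ obtained as the flat limit of $[\underline I]$ under the homothety. Because $\overline{\mathbb{G}}_m=\Spec\BC[s^{-1}]$ and $\Phi$ is equivariant, $\Phi$ is uniquely determined by its restriction to the fibre over $s=1$, namely an ordinary tangent vector $\varphi=(\varphi_i)_i\in\mathsf T_{[\underline I]}\Hilb\BA^n$; this shows at once that the differential of the immersion $\Hilb^+\BA^n\hookrightarrow\Hilb\BA^n$ is injective, so it remains only to identify its image with $\mathsf T^{\geqslant 0}$.

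To do so I would write the deformation attached to $\varphi_i$ by perturbing generators, $f\mapsto f+\epsilon\,\tilde\varphi_i(f)$ with $\tilde\varphi_i(f)\in R$ a lift of $\varphi_i(f)$, and then spread it out by the group action: over the open torus the fibre over $s$ is generated by the elements $f(tx)+\epsilon\,\tilde\varphi_i(f)(tx)$ with $t=s^{-1}$. Rescaling each generator by $t^{-\mathrm{ord}(f)}$, where $\mathrm{ord}$ denotes the $\Fm$-adic order, the part modulo $\epsilon$ tends to the initial form of $f$, while the $\epsilon$-term becomes $\sum_e t^{\,e-\mathrm{ord}(f)}(\tilde\varphi_i(f))_e$. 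This family extends regularly across the boundary $t=0$ precisely when $(\tilde\varphi_i(f))_e=0$ for $e<\mathrm{ord}(f)$, that is, exactly when $\varphi_i\big((I^{(i)})_{\geqslant k}\big)\subseteq (R/I^{(i)})_{\geqslant k}$ for every $k$. Thus an equivariant extension exists if and only if $\varphi\in\mathsf T^{\geqslant 0}_{[\underline I]}\Hilb\BA^n$, and this yields both inclusions simultaneously: regularity at $s=\infty$ forces non-negativity, and conversely a non-negative $\varphi$ produces a genuine $\mathbb{G}_m$-equivariant family over all of $\overline{\mathbb{G}}_m\times\Spec\BC[\epsilon]$, hence a point of $\Hilb^+\BA^n(\BC[\epsilon])$ by the universal property.

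Throughout one must carry the nesting along: the inclusions $I^{(1)}\supseteq\cdots\supseteq I^{(r)}$ are $\mathbb{G}_m$-stable, so the spread-out family is automatically nested over the torus, and the filtration condition imposed for every index $i$ guarantees that the limit at $s=\infty$ is again a nesting of homogeneous ideals; this is where the compatibility built into $\mathsf T_{[\underline I]}\Hilb\BA^n$ (as opposed to an unconstrained tuple of homomorphisms) is used. The main obstacle is the verification underlying the previous paragraph, namely that the spread-out deformation is flat over $\BC[s^{-1}][\epsilon]$ and remains so after taking the flat limit at $t=0$: one has to check that the rescaled generators still generate a flat nested family and that no spurious syzygy appears on the special fibre, so that the morphism to $\Hilb\BA^n$ is well defined there. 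The cleanest way to organise this bookkeeping—and to circumvent the fact that $[\underline I]$ is not itself a fixed point, so that the weight ``grading'' on $\mathsf T_{[\underline I]}\Hilb\BA^n$ is only the $\Fm$-adic filtration—is to pull back the tangent sheaf $\mathscr T_{\Hilb\BA^n}$ along $\varphi_0$, observe that it is a $\mathbb{G}_m$-equivariant coherent sheaf on $\mathbb A^1=\overline{\mathbb{G}}_m$, and read off its equivariant global sections, evaluation at $s=1$ identifying them with the non-negative-weight part of the fibre, i.e. with $\mathsf T^{\geqslant 0}$.
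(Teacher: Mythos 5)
Your proposal cannot be checked against an argument in the paper, because the paper contains none: \Cref{rem:nonneg-punctual} is imported verbatim from \cite[Theorem 4.11]{ELEMENTARY}, so the relevant comparison is with Jelisiejew's proof there. Your overall architecture coincides with that source: a tangent vector to $\Hilb^+\BA^n$ at $[\underline I]$ is a $\BG_m$-equivariant morphism $\overline{\BG}_m\times\Spec\BC[\epsilon]\to\Hilb\BA^n$ lifting the orbit-closure arc $\varphi_0$; equivariance plus separatedness of $\Hilb\BA^n$ shows it is determined by its value at $s=1$, whence injectivity of $\mathsf T_{[\underline I]}\Hilb^+\BA^n\to\mathsf T_{[\underline I]}\Hilb\BA^n$; and the image is to be characterised as the set of tangent vectors whose spread-out family extends across $s=\infty$, to be matched with the filtration condition of \Cref{def:negativetangents}. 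Up to that point your write-up is correct, including the order-by-order analysis of the rescaled generators (modulo the small point that the lift $\tilde\varphi_i(f)$ may be corrected by elements of $I^{(i)}$, which is what makes the condition depend only on $\varphi_i(f)\in R/I^{(i)}$).

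The genuine gap is at the decisive step, the equivalence \lq\lq the first-order family extends across the boundary $\iff$ $\varphi\in\mathsf T^{\geqslant 0}$\rq\rq, which you flag as \lq\lq the main obstacle\rq\rq\ but never close. Boundedness of each rescaled generator as $t=s^{-1}\to 0$ is not the existence of a flat extension over the non-reduced base $\Spec\BC[s^{-1}][\epsilon]$: flatness over such a base cannot be checked fibrewise, and a flat limit need not exist at all --- if it always did, one would get $\mathsf T_{[\underline I]}\Hilb^+\BA^n=\mathsf T_{[\underline I]}\Hilb\BA^n$, contradicting the fact that $\mathsf T^{<0}_{[\underline I]}\Hilb\BA^n$ is in general non-zero (it typically contains the images of the partial derivatives, cf.\ \Cref{rem:deftheta}). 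Conversely, an abstract equivariant flat extension is not a priori generated by your rescaled generators, so the \lq\lq only if\rq\rq\ direction also needs an argument (say via $t$-torsion-freeness of the quotient, which follows from flatness, plus a graded-piece analysis). Your proposed remedy --- pulling back the tangent sheaf along $\varphi_0$ and taking equivariant sections --- does not fill this hole. First, deformations of the morphism $\varphi_0$ with source and target fixed are classified by $\Hom_{\OO_{\overline{\BG}_m}}\big(\varphi_0^*\Omega_{\Hilb\BA^n},\OO_{\overline{\BG}_m}\big)$, not by sections of $\varphi_0^*\mathscr T_{\Hilb\BA^n}$; these differ, since $\Hilb\BA^n$ is singular and pullback does not commute with dualisation. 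Second, and more seriously, identifying the invariant sections of that sheaf, evaluated at $s=1$, with the filtration-defined space $\mathsf T^{\geqslant 0}_{[\underline I]}\Hilb\BA^n$ is exactly the statement to be proved, so the reformulation renames the problem rather than solving it. What is missing is the mechanism the cited proof supplies: a Rees-type correspondence between $\BG_m$-equivariant families flat over $\overline{\BG}_m\times\Spec\BC[\epsilon]$ and suitable filtrations on the deformed ideals, which is what converts \lq\lq extendability with flatness\rq\rq\ into the concrete condition $\varphi_i\big((I^{(i)})_{\geqslant k}\big)\subset (R/I^{(i)})_{\geqslant k}$ for all $k$ and $i$.
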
 
\begin{notation}
    In the rest of the paper, we denote by $\mathsf t _{ [\underline{I}]}   \hilbert{}{\BA^n}$ the complex dimension
    \[
    \mathsf t _{ [\underline{I}]}   \hilbert{}{\BA^n}=\dim_{\BC}\mathsf T_{ [\underline{I}]}   \hilbert{}{\BA^n},
    \]
    and similarly for the graded pieces of the tangent space. Moreover, if $M,N$ are $R$-modules and $\Hom_R(M,N)$ is finite dimensional over $\BC$, we denote by $\hom_R(M,N)$ its complex dimension.
\end{notation}
\subsection{Hilbert functions and embedding dimension} \label{subsec:HS}

\begin{definition}
Let $A=\bigoplus_{k\in\BZ} A_k$ be a graded $\BC$-algebra of finite type and let $M=\bigoplus_{k\in\BZ} M_k$ be a finitely generated graded $A$-module. The \emph{Hilbert function} $\bh_M$ associated to $M$ is the function $\bh_M : \BZ \to \BN$ defined by
\[
k\   \mapsto\ \dim_{\BC} M_k.
\]
 Let $(A,\Fm_A)$ be a  local Artinian $\BC$-algebra of finite type. The \emph{Hilbert function} $\bh_A$ of $A$ is defined to be the Hilbert function of its associated graded algebra  
\begin{equation}\label{eqn:grading-A-module}
\mathsf{gr}_{\Fm_A}(A) =\bigoplus_{k\geqslant 0}\, \mathfrak m_A^{k}/\mathfrak m_A^{k+1},
\end{equation}
where $\mathsf{gr}_{\Fm_A}(A)$ is seen as a graded module over itself.  
\end{definition}
\begin{notation}
 For the ideal $I\subset R$ of a fat point supported at the origin, the function $\bh_{R/I}$ eventually vanishes, therefore we represent it as a tuple of positive integers. 

\end{notation}

Recall that the map $[\underline{I}] \mapsto \underline{\bh}_{R/\underline{I}}$
is locally constant on $\Hilb ^{+}\BA^n$, see \cite[Prop.~3.1]{ELEMENTARY}. This induces, for every $\underline{\bh}=(\bh_1,\ldots,\bh_r):\BZ\to\BN^r$, a canonical scheme of finite type  structure on the (possibly empty) locally closed subsets
\begin{equation}\label{eq:defHSStratum}
    H_{\underline{\bh}}^n=\Set{ \left[\underline{I}\right] \in \Hilb^+  \BA^n | \underline{\bh}_{R/\underline{I}}\equiv \underline{\bh} }\subset \Hilb ^{ (|\bh_i|)_{i=1}^r,+}(\BA^n)\subset \Hilb ^+\BA^n,
\end{equation} 
     where $|\bh_i|=\sum_{k\geqslant 0} \bh_i(k)$ denotes the size of $\bh_i$, for $i =1,\ldots,r$.      
\begin{definition}
Given a $r$-tuple  $\underline{\bh}=(\bh_1,\ldots,\bh_r):\BZ\to\BN^r$ of   functions, the \emph{Hilbert--Samuel stratum} $ H_{\underline{\bh}}^n$  is the (possibly empty) locally closed subset given in \eqref{eq:defHSStratum}, endowed with the schematic structure induced by the Bia{\l{}}ynicki-Birula decomposition.
\end{definition}

Recall that there is a surjective morphism of schemes locally of finite type
    \[
         \Hilb ^{+}\BA^n \xrightarrow{\ \pi^n\ } (\Hilb \BA^n)^{\mathbb G_m},
    \] 
    corresponding to sending a $B$-point $[\varphi:B\times \overline\BG_m\rightarrow \Hilb\BA^n]$ to its limit $[\varphi|_{B\times \Set{\infty}}:B\times \Set{\infty}\rightarrow \Hilb\BA^n]$ which is a $B$-point of $\Hilb\BA^n$. Denote by $\OH_{\underline{\bh}}^n$ the $\BG_m$-fixed part of the ($\BG_m$-invariant) open subset $H_{\underline{\bh}}^n\subset \Hilb^+\BA^n$, i.e.~$\OH_{\underline{\bh}}^n=(H_{\underline{\bh}}^n)^{\BG_m}$. Then, the morphism $\pi^n$ restricts to a morphism $\pi_{\underline{\bh}}^n:  H_{\underline{\bh}}^n \to \OH_{\underline{\bh}}^n$.

    Moreover, given a closed point $[\underline I]\in(\Hilb\BA^n)^{\BG_m}\subset \Hilb\BA^n$, we have the following isomorphisms (see \cite{multigraded})
    \[
    \mathsf{T}_{[\underline{I}]} (\pi^n)^{-1}([\underline I]) \cong  \mathsf T_{ [\underline{I}]}^{>0}  \Hilb  \BA^n,\quad\mbox{ and }\quad\mathsf T_{ [\underline{I}]}  (\Hilb \BA^n)^{\mathbb G_m} \cong \mathsf T_{ [\underline{I}]}^{=0}  \Hilb  \BA^n.
    \]

\begin{remark}\label{rem:BBHOMOo} Set-theoretically, the fixed locus of  the diagonal action of the torus $\mathbb G_m=\Spec \BC[s,s^{-1}]$ on $ \Hilb^{} \BA^n $ agrees with the locus parametrising nestings of homogeneous ideals. Given a nesting $ [\underline{I}]=(\Hilb\BA^n) ^{\BG_m}$, the  $\mathbb G_m$-action lifts to the tangent space $\mathsf T_{[\underline{I}]} \Hilb^{}\BA^n$ and induces a weighted direct sum decomposition  
    \[
    \mathsf T_{[\underline{I}]} \Hilb^{ }\BA^n=\bigoplus_{k\in \BZ } \mathsf T_{[\underline{I}]}^{=k} \Hilb^{}\BA^n.
    \]
    This direct sum decomposition is consistent with \Cref{def:negativetangents} meaning that
    \[
    \mathsf T_{[\underline{I}]}^{\geqslant 0}\Hilb\BA^n=\bigoplus_{k\geqslant 0 } \mathsf T_{[\underline{I}]}^{=k} \Hilb\BA^n\quad \mbox{ and }\quad   \mathsf T_{[\underline{I}]}^{<0} \Hilb\BA^n \cong \bigoplus_{k<0 } \mathsf T_{[\underline{I}]}^{=k} \Hilb\BA^n,
    \]
    see \cite[Section 2]{ELEMENTARY} for more details.
\end{remark}

We conclude this subsection by reporting some basic properties of the Hilbert--Samuel strata in the case $r=1$ that we shall use in the next sections.


\begin{lemma}[{\cite[Prop.~3.1]{8POINTS}}, {\cite[Lemma 2.9 and Theorem 3.4]{MOTIVES}}]\label{lemma:tech2}
Fix $\bh = (1, h_1, \ldots , h_t)$. Then, for every $n\geqslant h_1$, there is a Zariski locally trivial fibration with fibre $\OH_{\bh}^{h_1}$   
\begin{equation}\label{eqn:rho-map}
\begin{tikzcd}
\OH_{\bh}^n\arrow[r,"\rho^n_{\bh}"]&\Gr(n-h_1,R_1),
\end{tikzcd}
\end{equation}
which on closed points sends a homogeneous ideal $[I]\in\OH_{\bh}^n$ to its linear part $[I_1]\in \Gr(n-h_1,R_1)$ . Moreover, the composite morphism
\[
\begin{tikzcd}
  H^n_{\bh} \arrow{r}{\pi^n_{\bh}} & \OH^n_{\bh} \arrow{r}{\rho^n_{\bh}} & \Gr(n-h_1,R_1)
\end{tikzcd}
\]
is Zariski locally trivial. Finally, there is a Zariski locally trivial fibration
\[
\begin{tikzcd}
    H_{\bh}^n\arrow[r]& H_{\bh}^{h_1},
\end{tikzcd}
\]
with fibre an $\BA^{(n-h_1)(d-h_1-1)}$-bundle over $\Gr(h_1,n)$. 
\end{lemma}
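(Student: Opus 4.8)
The plan is to deduce all three statements from a single principle: each of these morphisms is $\GL_n$-equivariant over a base on which $\GL_n$ acts transitively, so that local triviality is inherited from the Zariski-local triviality of the quotient map $\GL_n\to\Gr(n-h_1,R_1)$. Write $\GL_n=\GL(R_1)$ for the group acting on $R$ by linear substitution. Since the scalar torus $\BG_m\subset\GL_n$ is exactly the homothety torus, $\GL_n$ commutes with the $\BG_m$-action; hence it acts on $\Hilb^+\BA^n$, preserves each Hilbert--Samuel stratum $H^n_\bh$ (the Hilbert function is coordinate-free) and its fixed locus $\OH^n_\bh$, and commutes with $\pi^n_\bh$. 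Both $\rho^n_\bh$ and the composite $\rho^n_\bh\circ\pi^n_\bh$ are then manifestly equivariant for the induced action on $\Gr(n-h_1,R_1)$. Fixing the standard point $[W_0]$ with $W_0=\langle x_{h_1+1},\dots,x_n\rangle$, its stabiliser is a parabolic $P\subset\GL_n$ and the orbit map identifies $\Gr(n-h_1,R_1)=\GL_n/P$, with $\GL_n\to\GL_n/P$ Zariski-locally trivial via the standard affine charts of the Grassmannian. Consequently any $\GL_n$-equivariant $E\to\Gr(n-h_1,R_1)$ is isomorphic to the associated bundle $\GL_n\times^P E_{[W_0]}$, hence is a Zariski-locally trivial fibration with fibre $E_{[W_0]}$.

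For the first statement I would then identify $(\OH^n_\bh)_{[W_0]}$. A homogeneous ideal $I$ with $I_1=W_0$ satisfies $(x_{h_1+1},\dots,x_n)\subset I$, so $J:=I/(x_{h_1+1},\dots,x_n)$ is a homogeneous ideal in $\BC[x_1,\dots,x_{h_1}]$ with the same Hilbert function and $J_1=0$; this sets up an isomorphism of the fibre with $\OH^{h_1}_\bh$, proving the first claim. The second statement is then immediate from the same equivariance principle applied to $\rho^n_\bh\circ\pi^n_\bh$, with no need to name the fibre.

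For the third statement the substance is to describe the fibre $\Phi$ of the composite $\rho^n_\bh\circ\pi^n_\bh$ over $[W_0]$ by an elimination-of-variables normal form. A point of $\Phi$ is an $\Fm$-primary ideal $I$ with Hilbert--Samuel function $\bh$ whose image in $R_1$ (equivalently, the linear part of its initial ideal) equals $W_0$; this is the condition that, in $A=R/I$, the classes of $x_1,\dots,x_{h_1}$ form a basis of $\Fm_A/\Fm_A^2$ while $x_{h_1+1},\dots,x_n$ lie in $\Fm_A^2$. Using the elements $x_j-f_j\in I$ (with $f_j\in\Fm^2$, $j>h_1$) to eliminate the redundant variables, I would identify $\Phi$ with the total space of the vector bundle $(\Fm_A^2)^{\oplus(n-h_1)}$ over $H^{h_1}_\bh$, of rank $(n-h_1)\dim_\BC\Fm_A^2=(n-h_1)(d-h_1-1)$ with $d=|\bh|$; here the universal algebra over $H^{h_1}_\bh$ makes $\Fm_A^2$ into a locally free sheaf, so $\Phi\to H^{h_1}_\bh$ is an $\BA^{(n-h_1)(d-h_1-1)}$-bundle. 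Reassembling over the Grassmannian through the $\GL_n$-action (the parabolic $P$ acting on $H^{h_1}_\bh$ through its Levi quotient $\GL_{h_1}$, i.e.\ on $R_1/W_0\cong\langle x_1,\dots,x_{h_1}\rangle$) exhibits $H^n_\bh$ as a fibration over $H^{h_1}_\bh$ whose fibre is the promised $\BA^{(n-h_1)(d-h_1-1)}$-bundle over $\Gr(h_1,n)\cong\Gr(n-h_1,R_1)$.

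The main obstacle I expect is exactly this third step: carrying out the elimination in families rather than pointwise, i.e.\ producing the normal form $x_j-f_j$ as regular functions on $\Phi$, checking that the resulting quotient is flat with constant Hilbert function $\bh$, and verifying that the ``tail'' parameters $f_j$ genuinely sweep out the full space $\Fm_A^2$ freely, so that the fibre is honest affine space of dimension $(n-h_1)(d-h_1-1)$ and not a proper subvariety. A secondary delicate point is the well-definedness of the global morphism $H^n_\bh\to H^{h_1}_\bh$: the reduction to $h_1$ variables depends on a choice of splitting of $\Fm_A/\Fm_A^2$, so the map is only manifest locally on $\Gr(h_1,n)$, and one must check that these local reductions glue to a Zariski-locally trivial fibration — which is precisely where the local sections of $\GL_n\to\Gr$ are invoked once more.
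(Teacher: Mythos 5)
You should first note that the paper contains no proof of this lemma: it is imported verbatim from \cite{8POINTS} and \cite{MOTIVES}, so your attempt can only be measured against the arguments of those references, which do run along the lines you propose. Your treatment of the first two assertions is correct and is essentially the standard one: $\GL(R_1)$ acts on $\Hilb^{+}\BA^n$ commuting with the dilation torus, preserves the strata $H^n_{\bh}$ and their fixed loci $\OH^n_{\bh}$, the maps $\rho^n_{\bh}$ and $\rho^n_{\bh}\circ\pi^n_{\bh}$ are equivariant, an equivariant scheme over $\Gr(n-h_1,R_1)=\GL(R_1)/P$ is the associated bundle on its fibre over $[W_0]$, and $\GL(R_1)\to\GL(R_1)/P$ has Zariski-local sections. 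The identification of the fibre of $\rho^n_{\bh}$ with $\OH^{h_1}_{\bh}$, and of the fibre $\Phi$ of $\rho^n_{\bh}\circ\pi^n_{\bh}$ with the total space of a rank-$(n-h_1)(d-h_1-1)$ vector bundle over $H^{h_1}_{\bh}$, are also right, and the in-families elimination that you flag as the main obstacle is standard.

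The genuine gap is the final ``reassembling'' step of the third assertion. To obtain a morphism to the \emph{fixed} scheme $H^{h_1}_{\bh}$ (rather than to some associated bundle over the Grassmannian) your gluing needs the elimination map $e\colon\Phi\to H^{h_1}_{\bh}$, $I\mapsto I\cap\BC[x_1,\ldots,x_{h_1}]$, to be \emph{invariant} under the transition elements, which lie in $P$; equivariance through the Levi quotient would not suffice, and in fact even that equivariance is false, because the unipotent radical of $P$ does not fix $e$. Concretely, take $n=3$, $h_1=2$, $S=\BC[x,y]$, $J=(x^2,xy,y^4)$, so $\bh=(1,2,1,1)$, and $I=(z-y^2,\,x^2,\,xy,\,y^4)\in\Phi$, with $e(I)=J$. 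The substitution $g\colon x\mapsto x+z$, $y\mapsto y$, $z\mapsto z$ lies in the unipotent radical of $P$ (its Levi part is the identity), yet
\[
e(g\cdot I)=\bigl((x+y^2)^2,\,(x+y^2)\,y,\,y^4\bigr)\neq J,
\]
since $xy+y^3\notin J$. The structural reason is that modulo $I$ one only has $z\equiv f$ with $f\in\Fm^2$, so $g$ changes the surjection $S\to R/I$ by terms in the square of the maximal ideal, i.e.\ by a non-linear change of coordinates congruent to the identity modulo $\Fm^2$, and such automorphisms genuinely move points of $H^{h_1}_{\bh}$. (In the homogeneous case this cannot happen because there $W_0\subset I$ kills all such corrections; that is exactly why your proof of the first statement is safe, and why the phenomenon is invisible there.) Consequently the locally defined reductions to $h_1$ variables disagree on overlaps and do not glue. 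What your argument actually proves is the transposed structure: $H^n_{\bh}\to\Gr(n-h_1,R_1)$ is Zariski-locally trivial with fibre the total space of a rank-$(n-h_1)(d-h_1-1)$ vector bundle over $H^{h_1}_{\bh}$. This gives the same dimension count and motivic class, which is all the present paper ever extracts from the lemma, but it is not the stated fibration $H^n_{\bh}\to H^{h_1}_{\bh}$; producing that morphism requires an input beyond equivariance over the Grassmannian, and your proposal does not supply it.
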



    


\subsection{Trivial Negative tangents and elementary components} \label{subsec:TNT}

	Recall that the $\underline{d}$-nested Hilbert scheme has always  a distinguished irreducible component of dimension $d_r\cdot \dim X$. Precisely, the \emph{smoothable component}
    \[
    \Hilb^{\underline{d}}_{\mathrm{sm}}{ \BA^n}=\overline{\Set{[\underline{Z}]\in\Hilb\BA^n | Z^{(r)} \mbox{ is reduced}}}\subset \Hilb \BA^n.
    \]
    We refer to schemes corresponding to its closed points as smoothable schemes. 
	
	\begin{definition} An irreducible component $V\subset   \hilbert{}{\BA^n}$ is \textit{elementary} if it parametrises just fat nestings.  
         
	\end{definition}

Elementary components are considered the building blocks of the Hilbert schemes of points as each irreducible component is shown in \cite{Iarrocomponent} to be generically étale locally  a product of elementary components.

\begin{remark}\label{rem:deftheta}
 Let us identify the partial derivatives $\partial_{x_j}$, for $j=1,\ldots,n$, with a basis of the tangent space $\mathsf T_0\BA^n$ and let us consider a fat nesting $[\underline{Z}]\in \hilbert{+}{\BA^n}$. In this setting, we have a homomorphism
  \[
    \begin{tikzcd}[row sep = tiny]
         \mathsf T_{0} \BA^n \arrow[r,"\widetilde\theta"] & \mathsf T_{[\underline{Z}]}\Hilb^{} \BA^n\\
         \partial_{x_j}\arrow[r,mapsto]&\left( \pi^{(i)}\circ \partial_{x_j}\right)_{i=1}^r,
    \end{tikzcd}
  \]
where $\pi^{(i)}:R\to R/I^{(i)}$, for $i=1,\ldots,r$, is the canonical projection. Here, we have used the canonical identification of the tangent space to the nested Hilbert scheme at $[\underline I]\in\Hilb{\BA^n}$ with the subspace of $\bigoplus_{i=1}^r\Hom_R(I^{(i)},R/I^{(i)})$ cut out by the nesting conditions, see \cite{sernesi} for more details.

  We denote by  $\theta : \mathsf T_{0} \BA^n \to \mathsf T_{ [\underline{Z}]}^{< 0} \Hilb^{} \BA^n
  $ the map defined as  the composition of $\widetilde \theta$ with the canonical projection defining the negative tangent space, see \Cref{def:negativetangents}.
\end{remark}
\begin{definition}\label{def:TNT}
    Let $[\underline{Z}]\in \Hilb^{+} \BA^n$ be a fat nesting. Then, $[\underline{Z}]$ has TNT (Trivial Negative Tangents) if the   map 
\[
         \mathsf T_{0} \BA^n  \xrightarrow{\theta} \mathsf T_{ [\underline{Z}]}^{< 0}  \Hilb \BA^n
\]
    is surjective.
\end{definition} 
\Cref{thm:tnt per nested} is a generalisation of {\cite[Theorem 4.9]{ELEMENTARY}} and it relates the existence of ideals having TNT and the existence of generically reduced elementary components. 
\begin{theorem}[{\cite[Theorem 4]{UPDATES}}] \label{thm:tnt per nested} 
Let $V\subset \Hilb^{} X$ be an irreducible component.  Suppose that $V$ is generically reduced. Then $V$ is elementary if and only if a general point of $V$ has TNT.  
\end{theorem}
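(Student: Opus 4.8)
The plan is to reduce at once to $X=\BA^n$ and to play the two group actions on $\Hilb\BA^n$ against each other: the homotheties $\mathbb{G}_m$, whose attracting locus is exactly $\Hilb^{+}\BA^n$ (the fat nestings supported at the origin), and the translations $\mathbb{G}_a^n$, which preserve every irreducible component since they form a connected group. The bridge between the two is the map $\theta$ of \Cref{rem:deftheta}, recording how an infinitesimal translation of the support projects into $\mathsf T^{<0}_{[\underline Z]}\Hilb\BA^n$. Throughout I would use \Cref{rem:nonneg-punctual}, which identifies $\mathsf T_{[\underline Z]}\Hilb^{+}\BA^n$ with the non-negative part $\mathsf T^{\geqslant0}_{[\underline Z]}\Hilb\BA^n$, together with the fact that, as $V$ is generically reduced, a general $[\underline Z]\in V$ is a smooth point of $\Hilb\BA^n$, so that $\mathsf T_{[\underline Z]}V=\mathsf T_{[\underline Z]}\Hilb\BA^n$ and $\dim V=\mathsf t^{\geqslant0}_{[\underline Z]}\Hilb\BA^n+\mathsf t^{<0}_{[\underline Z]}\Hilb\BA^n$.

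For the implication ``elementary $\Rightarrow$ TNT'' I would argue as follows. If $V$ is elementary, every member is supported at one point, so the Hilbert--Chow map gives a $\mathbb{G}_a^n$-equivariant morphism $\sigma\colon V\to\BA^n$ sending a nesting to its support point; since translations move the support freely, this identifies $V\cong\BA^n\times V^{+}$, where $V^{+}=\sigma^{-1}(0)=V\cap\Hilb^{+}\BA^n$. Choosing $[\underline Z]$ general in $V^{+}$ (hence general in $V$ up to translation, so a smooth point), the product decomposition yields $\mathsf T_{[\underline Z]}V=\widetilde\theta(\mathsf T_0\BA^n)\oplus\mathsf T_{[\underline Z]}V^{+}$, with $\mathsf T_{[\underline Z]}V^{+}\subseteq\mathsf T_{[\underline Z]}\Hilb^{+}\BA^n=\mathsf T^{\geqslant0}_{[\underline Z]}\Hilb\BA^n$ by \Cref{rem:nonneg-punctual}. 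Projecting onto $\mathsf T^{<0}_{[\underline Z]}\Hilb\BA^n$ kills the second summand, while the whole space $\mathsf T_{[\underline Z]}V=\mathsf T_{[\underline Z]}\Hilb\BA^n$ surjects onto this quotient by construction. Hence $\theta(\mathsf T_0\BA^n)$, the image of $\widetilde\theta$, equals all of $\mathsf T^{<0}_{[\underline Z]}\Hilb\BA^n$, which is exactly TNT.

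For the converse ``TNT $\Rightarrow$ elementary'', let $[\underline Z]\in V$ be general; since TNT is a property of fat nestings, after translating its support to the origin $[\underline Z]$ lies in $\Hilb^{+}\BA^n$, and by generic reducedness it is a smooth point of $\Hilb\BA^n$. The translation directions $\widetilde\theta(\mathsf T_0\BA^n)$ have strictly negative weight and hence meet $\mathsf T^{\geqslant0}$ trivially (cf.\ \Cref{rem:BBHOMOo}), so $\theta$ is injective with $\dim\theta(\mathsf T_0\BA^n)=n$; combined with the surjectivity coming from TNT this forces $\mathsf t^{<0}_{[\underline Z]}\Hilb\BA^n=n$ and $\theta$ to be an isomorphism. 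As $[\underline Z]$ is smooth, $\Hilb\BA^n$ coincides with $V$ near it, and the Bia\l ynicki--Birula smoothness of the attracting cell shows $\Hilb^{+}\BA^n$ is smooth at $[\underline Z]$ of dimension $\mathsf t^{\geqslant0}_{[\underline Z]}\Hilb\BA^n$; thus $V^{+}=V\cap\Hilb^{+}\BA^n$ has this same dimension and $\dim V=\mathsf t^{\geqslant0}_{[\underline Z]}\Hilb\BA^n+n=n+\dim V^{+}$. Therefore the orbit $\mathbb{G}_a^n\cdot V^{+}$ is dense in $V$, and every point of $\overline{\mathbb{G}_a^n\cdot V^{+}}$ is a translate of a fat nesting supported at the origin, hence itself fat, so $V$ is elementary.

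The step I expect to be the main obstacle is the dimension bookkeeping in the converse: one must guarantee that $V^{+}$ really has the expected dimension $\mathsf t^{\geqslant0}_{[\underline Z]}\Hilb\BA^n$ and that no ``spreading'' first-order deformation survives outside the translation directions. This is precisely where the two structural inputs are indispensable—generic reducedness, to identify $\Hilb\BA^n$ with the single component $V$ near $[\underline Z]$ and so transport the smoothness of the Bia\l ynicki--Birula cell to $V^{+}$, and the transversality $\widetilde\theta(\mathsf T_0\BA^n)\cap\mathsf T^{\geqslant0}=0$, which upgrades the mere surjectivity in \Cref{def:TNT} to an honest splitting $\mathsf T_{[\underline Z]}\Hilb\BA^n=\mathsf T^{\geqslant0}_{[\underline Z]}\Hilb\BA^n\oplus\widetilde\theta(\mathsf T_0\BA^n)$. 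As an alternative route for controlling the general member once the negative tangents are pinned down, one can invoke the generic étale-local product decomposition into elementary components of \cite{Iarrocomponent}.
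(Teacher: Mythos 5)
First, a point of order: the paper does not actually prove this statement---it is imported verbatim from \cite[Theorem 4]{UPDATES}---so your proposal can only be measured against the arguments in the cited sources (\cite[Theorem 4.9]{ELEMENTARY} and its nested generalisation in \cite{UPDATES}). Your forward implication (elementary $\Rightarrow$ TNT) follows the standard line of those sources and is correct in outline: translations give a product structure on an elementary component, the punctual factor has non-negative tangents, and generic reducedness upgrades $\mathsf T_{[\underline Z]}V$ to the full tangent space $\mathsf T_{[\underline Z]}\Hilb\BA^n$. One step, though, needs more care than you give it: you apply \Cref{rem:nonneg-punctual} to $V^{+}$, which requires $V^{+}$ to be the fibre product $V\times_{\Hilb\BA^n}\Hilb^{+}\BA^n$, whereas your product decomposition $V\cong\BA^n\times V^{+}$ is established for the scheme-theoretic fibre $\sigma^{-1}(0)$ of the support morphism. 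These two objects agree on closed points but a priori not as schemes, because $\Hilb^{+}\BA^n\to\Hilb\BA^n$ is only known to be a monomorphism, not an immersion; a family of fat nestings supported at the origin need not factor through $\Hilb^{+}\BA^n$. The cited proofs sidestep this by a dimension count: surjectivity of $\BA^n\times V^{+}\to V$ on closed points gives $\dim V^{+}\geqslant\dim V-n$, the fibre-product description gives $\mathsf T_{[\underline Z]}V^{+}=\mathsf T_{[\underline Z]}V\cap\mathsf T^{\geqslant0}_{[\underline Z]}\Hilb\BA^n$, and injectivity of $\theta$ (a lemma proved via the support, not via weights) forces $\theta$ to be onto.

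The genuine gap is in your converse. You invoke ``Bia{\l}ynicki-Birula smoothness of the attracting cell'' to conclude that $\Hilb^{+}\BA^n$ is smooth at $[\underline Z]$ of dimension $\mathsf t^{\geqslant0}_{[\underline Z]}\Hilb\BA^n$. This is unjustified: the local structure of $\Hilb^{+}\BA^n$ at $[\underline Z]$ is governed by the geometry of $\Hilb\BA^n$ along the whole $\BG_m$-flow from $[\underline Z]$ down to its limit $\lim_{s\to\infty}s\cdot[\underline Z]$, a homogeneous nesting that may well be a singular point of $\Hilb\BA^n$. Generic reducedness gives smoothness at $[\underline Z]$ only, and BB-type smoothness theorems require smoothness of the ambient scheme along the orbit closure, not at a single non-fixed point. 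Two further steps are also off: the injectivity of $\theta$ cannot be argued by ``strictly negative weight'' (\Cref{rem:BBHOMOo} applies only at $\BG_m$-fixed points, and a general $[\underline Z]$ carries no weight decomposition), and your closing sentence---every point of $\overline{\BG_a^n\cdot V^{+}}$ is a translate of a fat nesting, hence fat---asserts exactly what has to be proven, since limits of fat nestings being fat is the whole issue.

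Fortunately, under the paper's definitions the converse needs none of this machinery. By \Cref{def:TNT}, having TNT presupposes being a fat nesting; so the hypothesis that a \emph{general} point of $V$ has TNT already says that a dense subset of $V$ consists of fat nestings. The fat locus is closed in $\Hilb^{\underline d}\BA^n$: the number of points in the fibres of the finite flat universal family $\OZ^{(r)}$ is lower semicontinuous (by the standard henselization argument for finite morphisms; equivalently, the fat locus is the preimage of the small diagonal under the Hilbert--Chow morphism applied to the deepest nesting). Hence $V$ lies entirely in the fat locus, i.e.\ $V$ is elementary---with no tangent-space analysis and without even using generic reducedness. The dimension bookkeeping you single out as ``the main obstacle'' can simply be discarded; the hard implication of the theorem, and the one where all the structure you set up genuinely matters, is the forward one.
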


\begin{remark}
    We will use the ideas presented this subsection in two different ways. Indeed, in the proof of \Cref{THMINTRO:A} (\Cref{thm:mainAtext}) we first observe that some ideal has TNT so proving that it lies only on generically reduced elementary components. Then,  we compute the dimension of some Hilbert--Samuel strata to show that they do not fit in the smoothable component. While, in the proof of \Cref{THMINTRO:B} (\Cref{thm:mainB}) we show that the generic point on some elementary component has not the TNT property, so showing that the component is generically non-reduced. 
\end{remark}
 
\subsection{The class of 2-step ideals}\label{subsec 2-step}
In this subsection we briefly review the definition of 2-step ideal  without linear syzygies from \cite{2step}, and we recall the exact formula for the dimension of the locus parametrising these objects.
\begin{definition}
A 2-step ideal $I\subset R$ of order $k\in\BZ_{>0}$ is a $\Fm$-primary ideal such that
\[
\mathfrak{m}^{k+2} \subset I \subset \mathfrak{m}^{k}\quad\mbox{ and }\quad I\not \subset \mathfrak{m}^{k+1}.
\] 
We say that the ideal $I$ is  without   linear syzygies if $\bh_I(k+1)-n\bh_I(k)\geqslant0$.
Similarly, a 2-step algebra is a local Artinian $\BC$-algebra of the form $R/I$ for $I$ 2-step ideal, and its Hilbert function is 2-step as well.
\end{definition}

\begin{prop}\label{prop:dim2step}
    Let $\bh$ be the Hilbert function of a 2-step ideal  without   linear syzygies, and let $\boldsymbol q = \bh_R-\bh $ be the Hilbert function of the corresponding quotient algebra. 
    Then, we have
    \[
    \dim H_{\boldsymbol q}^n =  \bh(k)\boldsymbol q(k) +\left(\bh({k+1}) -(n-1) \bh(k)\right)\boldsymbol q({k+1}) .
    \]
\end{prop}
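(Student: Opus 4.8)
The plan is to combine the Bia{\l}ynicki-Birula fibration $\pi^n_{\boldsymbol q}\colon H_{\boldsymbol q}^n\to\OH_{\boldsymbol q}^n$ with an explicit parametrisation of $2$-step ideals, computing separately the dimension of the fixed locus $\OH_{\boldsymbol q}^n$ and the dimension of the attracting fibres. Writing $k$ for the order, every $2$-step ideal $I$ satisfies $\Fm^{k+2}\subset I\subset\Fm^k$ and is determined by: the space of degree-$k$ leading forms $W\subset R_k$ with $\dim_\BC W=\bh(k)$; the space of degree-$(k+1)$ leading forms $U\subset R_{k+1}$ with $\dim_\BC U=\bh(k+1)$ and $R_1W\subset U$; and an inhomogeneous lift $\sigma\in\Hom_\BC(W,R_{k+1}/U)$ encoding the degree-$(k+1)$ tails of the chosen generators modulo $U$. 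The homogeneous data $(W,U)$ are the closed points of $\OH_{\boldsymbol q}^n$, and $\sigma$ parametrises the fibre of $\pi^n_{\boldsymbol q}$.

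First I would compute $\dim\OH_{\boldsymbol q}^n$ by realising it as a tower of Grassmannian bundles. The choice of $W\in\Gr(\bh(k),R_k)$ contributes $\bh(k)(\dim R_k-\bh(k))=\bh(k)\boldsymbol q(k)$. The defining inequality $\bh(k+1)-n\bh(k)\geqslant 0$ ensures that on a dense open subset of $\Gr(\bh(k),R_k)$ the multiplication map $R_1\otimes W\to R_{k+1}$ is injective, whence $\dim_\BC R_1W=n\bh(k)$; over this locus the choice of $U$ with $R_1W\subset U\subset R_{k+1}$ of dimension $\bh(k+1)$ is a point of $\Gr\bigl(\bh(k+1)-n\bh(k),\,R_{k+1}/R_1W\bigr)$, contributing $(\bh(k+1)-n\bh(k))\boldsymbol q(k+1)$. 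This presents $\OH_{\boldsymbol q}^n$ as irreducible of dimension $\bh(k)\boldsymbol q(k)+(\bh(k+1)-n\bh(k))\boldsymbol q(k+1)$.

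Next I would determine the fibre of $\pi^n_{\boldsymbol q}$ over a general (hence linear-syzygy-free) point $[I]\in\OH_{\boldsymbol q}^n$. By the isomorphism $\mathsf T_{[I]}(\pi^n)^{-1}([I])\cong\mathsf T^{>0}_{[I]}\Hilb\BA^n=\bigoplus_{d>0}\Hom_R(I,R/I)_d$ recalled in \Cref{subsec:HS}, it suffices to compute positive-degree homomorphisms. Since $\Fm^{k+2}\subset I$ forces $(R/I)_{\geqslant k+2}=0$, every $\phi$ of degree $\geqslant 2$ vanishes and, in degree $1$, only the $\bh(k)$ degree-$k$ generators have possibly nonzero images, landing in $(R/I)_{k+1}\cong R_{k+1}/U$ of dimension $\boldsymbol q(k+1)$. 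The absence of linear syzygies at $[I]$ means there are no syzygies in degree $k+1$, while every syzygy of degree $\geqslant k+2$ pairs a coefficient of degree $\geqslant 2$ against an image in $(R/I)_{k+1}$ and so lands in $(R/I)_{\geqslant k+3}=0$; hence no constraints arise and $\Hom_R(I,R/I)_1\cong\Hom_\BC(W,R_{k+1}/U)$ has dimension $\bh(k)\boldsymbol q(k+1)$, exactly matching the space of lifts $\sigma$.

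Finally I would assemble the count. The correspondence $I\leftrightarrow(W,U,\sigma)$ realises $H_{\boldsymbol q}^n$ as the total space of the vector bundle with fibre $\Hom_\BC(W,R_{k+1}/U)$ over $\OH_{\boldsymbol q}^n$, so it is an affine bundle and its dimension is the sum of base and fibre dimensions,
\[
\dim H_{\boldsymbol q}^n=\bh(k)\boldsymbol q(k)+(\bh(k+1)-n\bh(k))\boldsymbol q(k+1)+\bh(k)\boldsymbol q(k+1)=\bh(k)\boldsymbol q(k)+\bigl(\bh(k+1)-(n-1)\bh(k)\bigr)\boldsymbol q(k+1).
\]
The hard part will be making the correspondence $I\mapsto(W,U,\sigma)$ into an honest isomorphism of schemes onto an affine bundle over $\OH_{\boldsymbol q}^n$ — equivalently, checking that $H_{\boldsymbol q}^n$ is reduced with everywhere the expected tangent dimension — so that the fibre dimension $\bh(k)\boldsymbol q(k+1)$ truly measures the attracting directions rather than merely bounding them. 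Here the inequality $\bh(k+1)-n\bh(k)\geqslant 0$ is essential: it keeps the fibre dimension constant over the dense linear-syzygy-free locus and confines the jumps to a strictly smaller-dimensional sublocus, so the generic computation governs the dimension of the irreducible stratum.
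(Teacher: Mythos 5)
First, a caveat on the comparison: this paper does not actually prove \Cref{prop:dim2step} --- the statement is recalled without proof from \cite{2step} --- so your proposal has to be judged against what the statement requires rather than against an argument printed here. Your skeleton is the natural one and much of it is sound: a $2$-step ideal is exactly a triple $(W,U,\sigma)$ with $R_1W\subseteq U$ and $\sigma\in\Hom_{\BC}(W,R_{k+1}/U)$ arbitrary, and the Bia{\l}ynicki-Birula fibre over $[(W,U)]$ is precisely the affine space of the $\sigma$'s, of \emph{constant} dimension $\bh(k)\boldsymbol q(k+1)$ over all of $\OH^n_{\boldsymbol q}$. In particular your closing worries (reducedness of the stratum, tangent directions ``merely bounding'' the fibre) are beside the point: dimension only depends on the underlying reduced space, and you already have the matching lower bound from the explicit $\sigma$'s. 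Two smaller corrections: the syzygy discussion in your $\Hom$ computation is vacuous, since every syzygy has coefficients in $\Fm$ and $\Fm\cdot(R/I)_{k+1}\subseteq (R/I)_{\geqslant k+2}=0$, so degree-$1$ assignments are never constrained, with or without linear syzygies; and the claim that the numerical inequality $\bh(k+1)-n\bh(k)\geqslant 0$ forces generic injectivity of $R_1\otimes W\to R_{k+1}$ needs either Hochster--Laksov (valid for $k\geqslant 2$ only; for $k=1$ and $\bh(k)\geqslant 2$ it is false because of Koszul syzygies) or, better, semicontinuity starting from one linear-syzygy-free ideal, which is what the hypothesis actually provides.

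The genuine gap is the passage from the syzygy-free open locus to all of $\OH^n_{\boldsymbol q}$ (equivalently, to all of $H^n_{\boldsymbol q}$, since the $\sigma$-fibres have constant dimension). Over the locus where $\dim_{\BC}R_1W=n\bh(k)-t$, $t\geqslant 1$, the Grassmannian of admissible $U$'s has dimension $(\bh(k+1)-n\bh(k)+t)\boldsymbol q(k+1)$, i.e.\ it jumps by exactly $t\,\boldsymbol q(k+1)$; so the stated formula is an equality if and only if
\[
\codim_{\Gr(\bh(k),R_k)}\left\{\,W \ :\ \dim_{\BC}R_1W\leqslant n\bh(k)-t\,\right\}\ \geqslant\ t\,\boldsymbol q(k+1)\qquad\text{for all } t\geqslant 1,
\]
and your proposal neither states nor proves this inequality; instead it asserts that the jumps are confined to a ``strictly smaller-dimensional sublocus'', which is false. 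Concretely, take $n=3$, $k=2$, $\boldsymbol q=(1,3,4,4)$, so $\bh(2)=2$, $\bh(3)=6$, and the formula gives $2\cdot 4+(6-4)\cdot 4=16$. A pencil of quadrics $W$ has a linear syzygy if and only if it has a common linear factor, $W=h\langle \ell,m\rangle$; these form a $4$-dimensional locus in $\Gr(2,R_2)$ with $\dim_{\BC} R_1W=5$, over which $U$ ranges in $\BP(R_3/R_1W)\cong\BP^4$ and $\sigma$ in an $8$-dimensional space: total $4+4+8=16$ again. This second $16$-dimensional irreducible family is \emph{not} contained in the closure of the syzygy-free one: the closure of the graph of $W\mapsto R_1W$ is $8$-dimensional and birational to $\Gr(2,R_2)$, so its fibres over the $4$-dimensional degenerate locus are at most $3$-dimensional and cannot fill the $\BP^4$. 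Hence $H^3_{\boldsymbol q}$ is reducible, the jump locus contributes a component of \emph{equal} dimension, and the proposition survives only because codimension and jump happen to coincide. What your argument establishes is therefore only the lower bound $\dim H_{\boldsymbol q}^n\geqslant \bh(k)\boldsymbol q(k)+\left(\bh(k+1)-(n-1)\bh(k)\right)\boldsymbol q(k+1)$; the upper bound --- that no family lying over syzygy-degenerate $W$'s ever exceeds this number --- is the real content of the proposition and is missing from the proposal.
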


\section{\texorpdfstring{Proof of \Cref{THMINTRO:A}}{Proof of Theorem A}}
\label{sec3}
In this section we prove \Cref{THMINTRO:A}. First we describe a collection of ideals having the Hilbert functions discussed in \Cref{prop:non-smoothable}, which in turn is a key tool in the proof of \Cref{THMINTRO:A}. We conjecture in \Cref{conj:TNT} that all the ideals belonging to this collection have TNT, and we confirm it in \Cref{prop:checked}, for $n\leqslant 15$. 


Fix an integer $n\geqslant 4$ and consider the ideals
\begin{equation}\label{eq:DeltaJ}
    \Delta_n=\rk\begin{pmatrix}
    x_1 & x_2 & \cdots &x_{n}\\
    x_n & x_1 & \cdots &x_{n-1} 
\end{pmatrix}\leqslant 1, \qquad
J_n = x_n(x_i+x_{n-1} \ |\ i=1,\ldots,n-2 ),
\end{equation}
and their sum $I^{(2)}=\Delta_n+J_n$. We omit the dependence on $n$ from $I^{(2)}$.

\begin{remark}\label{rem:gensI2}
    We have $\bh_{R/\Delta_n}(i)=n$, for all $i\geqslant1$. A minimal set of generators for  $\Delta_n$ has the form
    \[
    \Set{x_ix_j-x_{i-1}x_{j+1}| 1\leqslant i\leqslant j\leqslant n }.
    \]
     Moreover, we have $\bh_{R/I^{(2)}}=(1,n,2)$.
\end{remark}

\begin{example}\label{ex:dim4}This example appeared firstly in \cite{2step}. For $n=4$, the ideal
    \[
    I^{(2)}=\Delta_4+J_4 ,
    \]
    has minimal free resolution of the form
    \[
    0\leftarrow I^{(2)}\leftarrow R(-2)^{\oplus8}\leftarrow R(-3)^{\oplus 12} \oplus R(-4)\leftarrow R(-4)^{\oplus 4} \oplus R(-5)^{\oplus 4}\leftarrow R(-6)^{\oplus 2}  \leftarrow 0.
    \]
    A direct check shows that, denoted by $I^{(1)}$ the ideal $I^{(1)}=(x_1,x_2)^2+(x_3,x_4)$, we have
    \[
    \mathsf t^{=-1}_{[I^{(1)}\supset I^{(2)}]}\Hilb\BA^4=4.
    \]
    In other words, the point $[I^{(1)}\supset I^{(2)}]\in\Hilb\BA^4$ has TNT, and hence it lies on a generically reduced elementary component, see \cite{2step} for the details.
    Both the resolution and the dimension of the tangent space can be computed via the following \textit{M2} \cite{M2} code which adopts the package \cite{HilbQuotPaoloLella}.
    \begin{verbatim}
    loadPackage "HilbertAndQuotSchemesOfPoints";
    n=4;     -- n >= 4
    R=QQ[x_1..x_n];
    Delta = minors(2,matrix {{x_1 .. x_n},
                             {x_n} | {x_1 .. x_(n-1)}});
    J = x_n*ideal( for i from 1 to n-2 list (x_i+x_(n-1)) );
    I2 = Delta + J;
    betti res I2
    s = 2;     -- 2 <= s <= n-2
    I1 = (ideal (for i from 1 to s list x_i))^2 +
          ideal (for i from s+1 to n list x_i);
    point = nestedHilbertSchemePoint {I1,I2};
    TS = tangentSpace point;
    hilbertSeries TS
    hilbertFunction(-1,TS)
    \end{verbatim}
\end{example}

We show now that the behaviour in \Cref{ex:dim4} can be promoted to a general statement. Precisely, we prove that, for $n\geqslant 4$, the nestings $I^{(1)}\supset I^{(2)}$, constructed from \eqref{eq:DeltaJ}, are non-smoothable. We expect they to have TNT as we conjecture in \Cref{conj:TNT}, see also \Cref{rem:expect} and \Cref{prop:checked}.

{ 
\begin{prop}\label{prop:non-smoothable}
Fix a positive integer $n\geqslant4$. Then, the generic nesting with vector of Hilbert functions 
    \[
    \underline{\bh}=((1,s),(1,k,2)),
    \]
    for $4\leqslant k \leqslant n$, and $2\leqslant s\leqslant k-2$ is non-smoothable.
\end{prop}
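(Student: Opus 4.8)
The plan is to prove that a general nesting $[\underline{Z}]$ with $\underline{\bh}=((1,s),(1,k,2))$ is not a point of the smoothable component $\Hilb^{\underline{d}}_{\sm}\BA^n$, where $\underline{d}=(1+s,k+3)$. The first step is a reduction to the full embedding dimension case: since $Z^{(2)}$ has Hilbert function $(1,k,2)$ it has embedding dimension exactly $k$, so the whole nesting lies in a linear subspace $\BA^k\subseteq\BA^n$, and because smoothability of a fat nesting depends only on its minimal embedding dimension, I may assume $k=n$. This reduction is the first delicate point; it is the independence of smoothability from the ambient space, and it is compatible with the embedding-dimension fibrations of \Cref{lemma:tech2}.

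\textbf{The dimension estimate.} Assume then $k=n$, so $I^{(2)}\subseteq\Fm^2$. Here $H^n_{(1,n,2)}$ is simply the Grassmannian of codimension-$2$ subspaces of $R_2$, and \Cref{prop:dim2step} gives $\dim H^n_{(1,n,2)}=n^2+n-4$. I would then exhibit the nested stratum $H^n_{\underline{\bh}}$ as a fibration over $H^n_{(1,n,2)}$: for fixed $[I^{(2)}]$, the ideals $I^{(1)}\supseteq I^{(2)}$ with Hilbert function $(1,s)$ are exactly the $I^{(1)}=L'+\Fm^2$ with $L'\in\Gr(n-s,R_1)$, so the fibre is $\Gr(n-s,n)$ and $\dim H^n_{\underline{\bh}}=n^2+n-4+s(n-s)$. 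Letting the support vary over $\BA^n$ gives an irreducible family $\mathcal{H}$ of such nestings with $\dim\mathcal{H}=n^2+2n-4+s(n-s)$.

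\textbf{The large cases.} Comparing with $\dim\Hilb^{\underline{d}}_{\sm}\BA^n=n(n+3)$ (a general smoothable nesting being $n+3$ distinct points together with a choice of $1+s$ of them), I get $\dim\mathcal{H}-\dim\Hilb^{\underline{d}}_{\sm}\BA^n=s(n-s)-n-4$. For $2\leqslant s\leqslant n-2$ one has $s(n-s)\geqslant 2(n-2)$, which exceeds $n+4$ precisely when $n\geqslant 9$. Thus for $n\geqslant 9$ the irreducible family $\mathcal{H}$ is strictly larger than the smoothable component, so its general point is non-smoothable; this settles all cases with $k\geqslant 9$.

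\textbf{The small cases and the main obstacle.} The remaining cases $4\leqslant n\leqslant 8$ are exactly those where the estimate fails (at $n=8$, $s=2$ the two dimensions already coincide), and there $\mathcal{H}$ is instead a genuinely new, smaller elementary component, invisible to the dimension count. I would finish these by the TNT criterion, exactly as in \Cref{ex:dim4}: by \Cref{prop:checked} the nestings built in \eqref{eq:DeltaJ} have TNT for $n\leqslant 15$, hence---TNT being an open condition---the general point of $\mathcal{H}$ has TNT, and by \Cref{thm:tnt per nested} the closure of $\mathcal{H}$ is a generically reduced elementary component, whose general point is therefore non-smoothable. The main obstacle is thus twofold: justifying the reduction to $k=n$ through the intrinsic nature of smoothability, and reconciling the two regimes---an unconditional dimension inequality that only bites for $n\geqslant 9$, and the explicit TNT computation that covers the small cases.
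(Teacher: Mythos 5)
Your proposal is correct and follows essentially the same route as the paper: reduce to $k=n$, compute $\dim H^n_{\underline{\bh}} = n^2+n-4+s(n-s)$ by exhibiting the stratum as a (trivial) Grassmannian bundle over the compressed stratum $H^n_{(1,n,2)}$, compare with $\dim\Hilb^{s+1,n+3}_{\sm}\BA^n = n(n+3)$, and fall back on the TNT verification of \Cref{prop:checked} for small $n$. The difference in thresholds is immaterial: the paper uses the non-strict inequality (gap $\leqslant 0$) to cover $n\geqslant 8$ and TNT for $4\leqslant n\leqslant 8$, while you insist on a strict inequality, valid for $n\geqslant 9$, and let TNT absorb $n\leqslant 8$; both fallbacks are within the range checked in \Cref{prop:checked}.

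The one place you genuinely diverge is the justification of the reduction to $k=n$. The paper deduces it from the fibration structure of \Cref{lemma:tech2}, which makes the reduction a pure dimension bookkeeping: running the same count for $k<n$ inside $\BA^n$, the fibrations add equal amounts to both sides, and the gap $k+4-s(k-s)$ comes out independent of $n$. You instead invoke the principle that smoothability of a fat \emph{nesting} depends only on its minimal embedding dimension. That principle is a theorem for single fat points (Cartwright--Erman--Velasco--Viray), but it is not established for nestings in this paper or in the references it uses, so as stated your reduction rests on an unproved claim. It is a fixable weak point rather than a fatal one: since, as you note, your construction is compatible with \Cref{lemma:tech2}, the repair is simply to perform the stratum dimension count directly in $\BA^n$ for arbitrary $4\leqslant k\leqslant n$, which reproduces the same gap and removes any appeal to intrinsic smoothability.
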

\begin{proof} As a consequence of \Cref{lemma:tech2}, it is enough to consider the case $k=n$.
 The dimension of the smoothable component is $n(n+3)$,  the stratum $H_{(1,n,2)}^n$ parametrises compressed algebras. As a consequence it agrees with the homogeneous locus and it is isomorphic to a Grassmannian $\Gr(2,R_2)=\Gr\left(2,\binom{n+1}{2}\right)$. On the other hand, the morphism
 \[
 \begin{tikzcd}
  H^n_{\underline{\bh}}\arrow[r]&H^n_{(1,n,2)},   
 \end{tikzcd}
 \]
defines the trivial $\Gr(s,R_1)$-bundle. Therefore, the stratum  $H^n_{\underline{\bh}}$ has dimension 
 \[
 \dim H^n_{\underline{\bh}}=n^2+n -4 +s(n-s).
 \]
 Now we compare the dimensions of the smoothable component $\Hilb^{s+1,n+3}_{\sm}\BA^n$ and of the locally closed subset $H^n_{\underline{\bh}}\times\BA^n\subset\Hilb^{s+1,n+3}\BA^n$. We have
\begin{equation}\label{eq:gap}
    \dim \Hilb^{s+1,n+3}_{\sm}\BA^n- \dim H^n_{\underline{\bh}}\times\BA^n = n +4-s(n-s)= n(1-s) +4+s^2,
\end{equation}
which, for $n\geqslant 8$, is non-positive if and only if  $2\leqslant s \leqslant n-2$. In particular, in this range the generic $\underline{\bh}$-nesting is non-smoothable. For the cases $4\leqslant n \leqslant 8$ a direct check shows that the generic $\underline{\bh}$-nesting has TNT, see \Cref{ex:dim4} and \Cref{prop:checked}.    
\end{proof}

\begin{remark}
 As anticipated in the introduction, it was shown in \cite{ERMANVELASCO} that the smallest integer $e$ for which a fat point having Hilbert function $(1,n,e)$ is non-smoothable is $e=3$. In contrast, \Cref{prop:non-smoothable} shows that, by allowing the presence of nestings, this minimum becomes smaller.
\end{remark}
\begin{theorem}[\Cref{THMINTRO:A}]\label{thm:mainAtext}
   Let $R$ be the polynomial ring in $n\geqslant 4$ variables and let $2\leqslant s \leqslant n-2$ be an integer. Put 
   \[
   I^{(1)}=(x_1,\ldots,x_s)^2+(x_{s+1},\ldots,x_n),\mbox{ and } I^{(2)}=(H_2)+\Fm^3,
   \]
    where $H_2\subset R_2$ is a generic linear subspace of codimension $2$ of the space of quadratic forms.  Then, the point $[I^{(1)}\supset I^{(2)}]\in\Hilb^{1+s,n+3}\BA^n$ is non-smoothable.  
\end{theorem}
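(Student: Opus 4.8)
The plan is to deduce the statement from \Cref{prop:non-smoothable} by an equivariance argument that eliminates the dependence on the particular choice of $I^{(1)}$. First I would record the relevant combinatorics. Since $(x_1,\dots,x_s)^2$ contains every quadratic monomial in $x_1,\dots,x_s$ while $(x_{s+1},\dots,x_n)$ contains every monomial involving one of the last $n-s$ variables, one has $R_2\subset I^{(1)}$ and therefore $\Fm^2\subset I^{(1)}$; hence $R/I^{(1)}$ has Hilbert function $(1,s)$, and $I^{(1)}$ is determined by its linear part $(I^{(1)})_1=\langle x_{s+1},\dots,x_n\rangle$, a point of $\Gr(s,R_1)$. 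On the other hand $R/I^{(2)}$ has Hilbert function $(1,n,2)$, and because $(I^{(1)})_{\geqslant 2}=R_{\geqslant 2}$ the inclusion $I^{(2)}=(H_2)+\Fm^3\subset I^{(1)}$ holds for \emph{every} codimension-two subspace $H_2\subset R_2$. Thus $[I^{(1)}\supset I^{(2)}]$ is a genuine point of $\Hilb^{1+s,n+3}\BA^n$ lying in the Hilbert--Samuel stratum $H^n_{\underline{\bh}}$ with $\underline{\bh}=((1,s),(1,n,2))$.

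Next I would invoke the structure of this stratum used in the proof of \Cref{prop:non-smoothable}: the map $H^n_{\underline{\bh}}\to H^n_{(1,n,2)}\cong\Gr(2,R_2)$ is a trivial $\Gr(s,R_1)$-bundle, so $H^n_{\underline{\bh}}\cong\Gr(s,R_1)\times\Gr(2,R_2)$ is irreducible. By \Cref{prop:non-smoothable} its generic point is non-smoothable, and since the smoothable component is closed, the non-smoothable locus $U\subset H^n_{\underline{\bh}}$ is open; irreducibility makes it dense. It remains only to check that our particular nesting — with $I^{(1)}$ fixed and $H_2$ generic — lies in $U$ rather than in the proper closed complement $H^n_{\underline{\bh}}\cap\Hilb_{\sm}$.

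This is where the equivariance enters. The linear action of $\GL_n$ on $\BA^n$ sends reduced nestings to reduced nestings and is connected, hence preserves the irreducible smoothable component; consequently non-smoothability is a $\GL_n$-invariant condition. Moreover $\GL_n$ acts transitively on the codimension-$s$ subspaces of $R_1$, i.e.\ transitively on the admissible ideals $I^{(1)}$, and for $g\in\GL_n$ one has $g\cdot[I^{(1)}\supset (H_2)+\Fm^3]=[gI^{(1)}\supset (gH_2)+\Fm^3]$ with $gH_2$ again generic of codimension two. Therefore the property ``for generic $H_2$ the nesting $[I^{(1)}\supset (H_2)+\Fm^3]$ is non-smoothable'' is constant on the single $\GL_n$-orbit of all such $I^{(1)}$. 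By \Cref{prop:non-smoothable} this property holds for a generic $I^{(1)}$ — the fibre of the dense open $U$ over a generic point of $\Gr(s,R_1)$ is itself dense open — and hence it holds for every $I^{(1)}$, in particular for $(x_1,\dots,x_s)^2+(x_{s+1},\dots,x_n)$. This gives the non-smoothability of $[I^{(1)}\supset I^{(2)}]$ and completes the argument.

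The main obstacle is precisely this last reduction. Because $H^n_{\underline{\bh}}$ meets $\Hilb_{\sm}$ in a proper closed subset, one cannot conclude from \Cref{prop:non-smoothable} alone that a \emph{prescribed} $I^{(1)}$ (rather than a generic one) is non-smoothable; a special linear part could a priori be smoothable. What makes the argument go through is that the special behaviour is confined to the $\Gr(2,R_2)$-factor (through $H_2$), while the $\Gr(s,R_1)$-factor carrying $I^{(1)}$ is a homogeneous space for $\GL_n$, so transitivity together with the $\GL_n$-invariance of non-smoothability transports the generic conclusion to our fixed $I^{(1)}$. All of the genuinely hard work — the dimension count for $n\geqslant 8$ and the TNT verifications for the small cases — is already contained in \Cref{prop:non-smoothable}.
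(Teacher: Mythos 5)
Your proof is correct, and its core is the same as the paper's: everything is reduced to \Cref{prop:non-smoothable}. What you do differently is to address explicitly the mismatch between that proposition, which asserts non-smoothability of the nesting that is generic in the \emph{whole} stratum $H^n_{\underline{\bh}}\cong \Gr(s,R_1)\times\Gr(2,R_2)$, and the theorem, which fixes the point in the first factor and lets only $H_2$ vary generically. This gap is genuine: a dense open subset of a product need not meet a prescribed fibre, and the dimension count behind \Cref{prop:non-smoothable} fails on a single fibre, since $\dim\left(\{I^{(1)}\}\times\Gr(2,R_2)\times\BA^n\right)=n^2+2n-4$ is strictly smaller than $\dim \Hilb^{s+1,n+3}_{\sm}\BA^n=n^2+3n$, so some extra input is needed. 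Your $\GL_n$-equivariance supplies it: the smoothable component is $\GL_n$-stable (being the closure of the $\GL_n$-stable locus of reduced nestings), $\GL_n$ acts transitively on the ideals $\Fm^2+W$ with Hilbert function $(1,s)$, and the action preserves the shape $(H_2)+\Fm^3$ while carrying dense open families of $H_2$'s to dense open families; hence the conclusion over a generic fibre propagates to every fibre, in particular to the one over the theorem's $I^{(1)}$. The paper's own proof is much terser: it verifies the Hilbert functions (and uses semicontinuity of graded Betti numbers to identify generic $H_2$ with the generic point of $H^n_{(1,n,2)}$) and then directly invokes \Cref{prop:non-smoothable}, leaving the passage from ``generic $I^{(1)}$'' to the fixed $I^{(1)}$ implicit as a ``change of linear coordinates'' symmetry — which is precisely what your equivariance step formalises. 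So your write-up is, on this point, more complete than the paper's, at the cost of a longer argument.
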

\begin{proof}
The ideal $I^{(2)}=\Delta_n+J_n$, from \eqref{eq:DeltaJ}, is generated by $\left(\binom{n+1}{2}-2\right)$ quadrics and we have $\bh_{R/I^{(2)}}=(1,n,2)$, see \Cref{rem:gensI2}. By semicontinuity of the graded Betti numbers, this is true for the generic point of $H_{(1,n,2)}^2$. Similarly, we have $\bh_{R/I^{(1)}}=(1,s)$. Therefore, we can apply \Cref{prop:non-smoothable} and we get the result. 
\end{proof}
\begin{remark}\label{rem:expect} 
It is remarkable that, in all cases we have checked, the  nestings in \Cref{thm:mainAtext} have the TNT property. In particular, providing a generically reduced elementary component of $\Hilb\BA^n$. Our check consists in showing that the ideal $I^{(2)}=\Delta_n+J_n$, from \eqref{eq:DeltaJ}, have the TNT property for a given $n\geqslant 4$. This is enough because the TNT property is an open condition, see \cite{ELEMENTARY}. This evidence and the apparent combinatorial structure of the considered ideals  lead us to formulate \Cref{conj:TNT}, which we are able to confirm up to dimension $n=15$ in \Cref{prop:checked}.
\end{remark}
\begin{conjecture}\label{conj:TNT}
   Fix an integer $n\geqslant 4$ and consider a nesting
    \[
    \underline{\bh}=((1,s),(1,k,2)).
    \]
    Then, $V_{\underline{\bh}}=\overline{H^n_{\underline{\bh}}}\times \BA^n$ is a generically reduced   elementary component if and only if  $
        4\leqslant k\leqslant n $, and $ 2\leqslant s\leqslant k-2 $. On the other hand, if $\bh^{(2)}(2)=1$, then $V_{\underline{\bh}}$ is contained in the smoothable component.  
\end{conjecture}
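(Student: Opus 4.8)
The plan is to reduce everything to the compressed case $k=n$, and then to split the biconditional into a positive part, proved via the TNT property, and a negative part, proved by exhibiting explicit smoothings. By \Cref{lemma:tech2} the strata $H^n_{\underline{\bh}}$ with $\underline{\bh}=((1,s),(1,k,2))$ sit, as $k$ varies, in Zariski locally trivial fibrations over Grassmannians (this is exactly the reduction already used in \Cref{prop:non-smoothable}), so that whether a general nesting is smoothable and whether it has TNT are both inherited from the full embedding–dimension case. I would therefore work throughout with $\underline{\bh}=((1,s),(1,n,2))$, treating the excluded small values $k\leqslant 3$ separately at the end, where after the same reduction one lands in few variables and the short length forces smoothability.

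\textbf{The ``if'' direction.} Here I would follow the pattern of \Cref{thm:mainAtext}. The punctual stratum is smooth: as in the proof of \Cref{prop:non-smoothable}, $H^n_{(1,n,2)}\cong\Gr(2,R_2)$ and $H^n_{\underline{\bh}}\to H^n_{(1,n,2)}$ is a trivial $\Gr(s,R_1)$-bundle, hence $H^n_{\underline{\bh}}$ is smooth. Since TNT is a Zariski open condition (see \cite{ELEMENTARY}), to prove that the general nesting has TNT it suffices to verify it at the single explicit point $[I^{(1)}\supset I^{(2)}]$ with $I^{(1)}=(x_1,\dots,x_s)^2+(x_{s+1},\dots,x_n)$ and $I^{(2)}=\Delta_n+J_n$ from \eqref{eq:DeltaJ}. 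Granting this, TNT together with the smoothness of $H^n_{\underline{\bh}}$ yields, by \cite{ELEMENTARY}, that the general point is a smooth point of $\Hilb\BA^n$; in particular $V_{\underline{\bh}}=\overline{H^n_{\underline{\bh}}}\times\BA^n$ is generically reduced, and \Cref{thm:tnt per nested} then identifies it as an elementary component, while \Cref{prop:non-smoothable} certifies that it is distinct from the smoothable one precisely when $2\leqslant s\leqslant n-2$. The heart of the matter is thus the surjectivity of $\theta\colon\mathsf T_{0}\BA^n\to\mathsf T^{<0}_{[I^{(1)}\supset I^{(2)}]}\Hilb\BA^n$, uniformly in $n$. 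Because $Z^{(2)}$ has embedding dimension $n$, the map $\theta$ is injective, and as $\partial_{x_1},\dots,\partial_{x_n}$ have weight $-1$, its image is an $n$-dimensional subspace of $\mathsf T^{=-1}$; hence TNT is equivalent to the two graded statements $\mathsf t^{=-1}_{[I^{(1)}\supset I^{(2)}]}\Hilb\BA^n=n$ and $\mathsf T^{=c}_{[I^{(1)}\supset I^{(2)}]}\Hilb\BA^n=0$ for all $c\leqslant-2$. Both are computations of the graded nested $\Hom$ inside $\bigoplus_i\Hom_R(I^{(i)},R/I^{(i)})$ cut out by the nesting conditions.

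\textbf{The ``only if'' direction and the final assertion.} For these I would read off the dimension gap \eqref{eq:gap}, namely $\dim\Hilb^{s+1,n+3}_{\sm}\BA^n-\dim(H^n_{\underline{\bh}}\times\BA^n)=n(1-s)+4+s^2$, which (for $n\geqslant 8$, the remaining cases being checked directly) is strictly positive exactly when $s=1$ or $s\geqslant n-1$. In each such case I would show that the general nesting of type $\underline{\bh}$ is smoothable, so that $V_{\underline{\bh}}$ cannot be a separate generically reduced elementary component. For $s=1$ this is consistent with the gap because the ambient scheme $Z^{(2)}$ of type $(1,n,2)$ is itself smoothable, $e=3$ being the first non-smoothable value for $(1,n,e)$ by \cite{ERMANVELASCO}; one makes it precise by writing down a one–parameter family degenerating $n+3$ reduced points to $Z^{(2)}$ while carrying a length–two subscheme tracking $Z^{(1)}$. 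For the degeneration $\bh^{(2)}(2)=1$, that is $Z^{(2)}$ of type $(1,k,1)$, the same source gives smoothability ($e=1<3$), and the same construction places $V_{\underline{\bh}}$ inside the smoothable component.

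\textbf{Main obstacle.} The genuinely open step is the uniform–in–$n$ proof of TNT at $[I^{(1)}\supset I^{(2)}]$: the computations underlying \Cref{ex:dim4} and \Cref{prop:checked} are verified only for $n\leqslant 15$, and turning them into an $n$–independent argument requires a uniform description of the syzygies of $I^{(2)}=\Delta_n+J_n$. The natural route is to produce an explicit minimal free resolution of $I^{(2)}$ as a function of $n$, extending the shape $R(-2)^{\oplus\binom{n+1}{2}-2}\leftarrow R(-3)^{\oplus\ast}\oplus R(-4)^{\oplus\ast}\leftarrow\cdots$ observed for $n=4$, and then to evaluate the degree $-1$ and lower graded pieces of the nested $\Hom$ directly from it. The asymmetry introduced by $J_n$, which breaks the circulant symmetry of the determinantal part $\Delta_n$, is what makes this resolution delicate, and controlling it uniformly is the step I expect to be hardest.
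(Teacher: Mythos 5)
The statement you were asked to prove is a \emph{conjecture}: the paper does not prove it, and its own contribution (\Cref{prop:checked}) is exactly your ``if''-direction argument --- openness of the TNT property plus the explicit witness $I^{(1)}=(x_1,\ldots,x_s)^2+(x_{s+1},\ldots,x_n)$, $I^{(2)}=\Delta_n+J_n$ from \eqref{eq:DeltaJ} --- executed by machine for $n\leqslant 15$, together with the reduction to $k=n$ via \Cref{lemma:tech2} that you also adopt. Your proposal reproduces that strategy and then openly declares the step that would turn it into a theorem (a uniform-in-$n$ computation of $\mathsf T^{<0}$ at the witness, e.g.\ via an explicit minimal free resolution of $\Delta_n+J_n$) to be out of reach. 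So, as written, this is a research programme, not a proof; it is on par with, and no stronger than, what the paper itself establishes. Your graded reformulation of TNT at the witness (injectivity of $\theta$, $\mathsf t^{=-1}=n$, and vanishing of $\mathsf T^{=c}$ for $c\leqslant -2$) is correct for these homogeneous nestings of maximal embedding dimension and matches the data recorded in \Cref{fig:table non smoothability}.

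The genuine gap, beyond the one you acknowledge, is in your ``only if'' direction and in the final assertion. First, a strictly positive value of \eqref{eq:gap} proves nothing: dimension counts only ever establish non-smoothability (gap $\leqslant 0$); when the gap is positive you can conclude neither that $H^n_{\underline{\bh}}\times\BA^n$ lies in the smoothable component nor that it fails to be a component. Second, and more seriously, your plan to smooth the nesting by smoothing $Z^{(2)}$ ``while carrying a length-two subscheme tracking $Z^{(1)}$'' must use $s=1$ in an essential, and currently unexplained, way, because the naive version of this argument proves too much: for $2\leqslant s\leqslant n-2$ the top scheme $Z^{(2)}$, of Hilbert function $(1,n,2)$, is \emph{still} smoothable by \cite{ERMANVELASCO}, yet \Cref{prop:non-smoothable} (the paper's \Cref{THMINTRO:A}) says precisely that the nesting is not. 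Smoothability of $Z^{(2)}$ never implies smoothability of the pair; one must exhibit a flat family of length-$(1+s)$ subschemes inside a smoothing of $Z^{(2)}$ whose limit is the prescribed $Z^{(1)}$, and the failure of exactly this is the content of the paper's main theorem. Note also that for the bare ``only if'' claim there is a cheaper route you did not take: every point of $V_{\underline{\bh}}$ is a punctual nesting, so if $V_{\underline{\bh}}$ is an irreducible component it is automatically elementary, and the contrapositive of \Cref{thm:tnt per nested} reduces ``not a generically reduced elementary component'' to ``the generic point fails TNT'', a tangent-space computation (this is what the negative entries outside the shaded region of \Cref{fig:table non smoothability} certify for $n\leqslant 15$). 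What cannot be shortcut this way is the last sentence of the conjecture, that $\bh^{(2)}(2)=1$ forces containment in the smoothable component: that genuinely requires a smoothing-of-nestings construction, and it is the thinnest point of your sketch.
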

\begin{prop}\label{prop:checked}
    \Cref{conj:TNT} is true for $n\leqslant 15$.
\end{prop}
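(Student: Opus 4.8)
The plan is to reduce \Cref{conj:TNT} in the range $n\leqslant 15$ to a finite sequence of explicit tangent-space computations, one for each admissible pair $(n,s)$, and to carry these out in \textit{Macaulay2} \cite{M2} with the package \cite{HilbQuotPaoloLella} exactly as in \Cref{ex:dim4}. The first step is a reduction in the number of variables via \Cref{lemma:tech2}: the Zariski locally trivial fibrations appearing there relate the stratum $H^n_{\underline{\bh}}$ for $\underline{\bh}=((1,s),(1,k,2))$ to the corresponding stratum in $k=h_1$ variables, and they preserve both the property of being a generically reduced elementary component and the TNT property. As in the proof of \Cref{prop:non-smoothable}, it therefore suffices to treat $k=n$; running over $n=4,\ldots,15$ then covers every pair $(k,n)$ with $4\leqslant k\leqslant n\leqslant 15$.

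For the sufficiency direction (that $V_{\underline{\bh}}$ is a generically reduced elementary component when $k=n$ and $2\leqslant s\leqslant n-2$) I would argue as follows. By \Cref{prop:non-smoothable} the generic such nesting is non-smoothable, so by \Cref{thm:tnt per nested} it is enough to exhibit a single nesting with these Hilbert functions enjoying the TNT property: indeed TNT is an open condition \cite{ELEMENTARY}, so its validity at one point forces it at the generic point, and \Cref{thm:tnt per nested} then identifies $\overline{H^n_{\underline{\bh}}}\times\BA^n$ with a generically reduced elementary component. The explicit witness is the combinatorial nesting $[I^{(1)}\supset I^{(2)}]$ with $I^{(2)}=\Delta_n+J_n$ from \eqref{eq:DeltaJ} and $I^{(1)}=(x_1,\ldots,x_s)^2+(x_{s+1},\ldots,x_n)$. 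For this point one assembles the tangent space as the subspace of $\bigoplus_i\Hom_R(I^{(i)},R/I^{(i)})$ cut out by the nesting conditions, extracts the negative part $\mathsf T^{<0}_{[I^{(1)}\supset I^{(2)}]}\Hilb\BA^n$, and verifies that the map $\theta$ of \Cref{rem:deftheta} is surjective onto it, generalising the value $\mathsf t^{=-1}_{[I^{(1)}\supset I^{(2)}]}\Hilb\BA^4=4$ of \Cref{ex:dim4}.

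For the remaining assertions—that $V_{\underline{\bh}}$ fails to be a generically reduced elementary component outside the range $2\leqslant s\leqslant k-2$, and that it is contained in the smoothable component when $\bh^{(2)}(2)=1$—I would proceed case by case over the finitely many excluded triples. The dimension comparison \eqref{eq:gap} already shows that in these boundary cases $\overline{H^n_{\underline{\bh}}}\times\BA^n$ has dimension at most that of the relevant $\Hilb_{\sm}$; to upgrade this to genuine containment I would verify in \textit{Macaulay2}, for the combinatorial representative of each such nesting, that its total tangent space dimension equals the dimension of the smoothable component, so that the point is a smooth point lying on $\Hilb_{\sm}$, whence $V_{\underline{\bh}}\subseteq\Hilb_{\sm}$.

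The main obstacle is computational rather than conceptual: for $n$ close to $15$ the ambient nested Hilbert scheme sits inside a very high-dimensional space, and building the tangent space, isolating its weight-$(-1)$ piece, and computing the rank of $\theta$ are memory- and time-intensive. Working with the graded weight-$(-1)$ part keeps each problem finite-dimensional and feasible, but the growth of these linear-algebra computations is exactly what pins the verification at $n\leqslant 15$. The conceptual content—openness of TNT, the fibration reductions of \Cref{lemma:tech2}, and \Cref{thm:tnt per nested}—is what guarantees that checking the single combinatorial representative for each $(n,s)$ settles the conjecture throughout that range.
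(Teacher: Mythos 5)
Your treatment of the positive direction coincides with the paper's proof: the paper also reduces to exhibiting, for each pair $(n,s)$ with $n\leqslant 15$, the single witness $I^{(1)}=(x_1,\ldots,x_s)^2+(x_{s+1},\ldots,x_n)$, $I^{(2)}=\Delta_n+J_n$ from \eqref{eq:DeltaJ}, checking TNT with the code of \Cref{ex:dim4}, and invoking openness of TNT (\Cref{rem:expect}) to pass to the generic point of the stratum. One small imprecision: \Cref{thm:tnt per nested} cannot be the tool that \emph{produces} the generically reduced elementary component, since it takes generic reducedness of the component as a hypothesis; the inference actually used by the paper (see \Cref{ex:dim4} and \Cref{rem:expect}, resting on \cite{2step} and \cite{ELEMENTARY}) is that a point with TNT automatically lies on a generically reduced elementary component. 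As stated, your appeal to \Cref{thm:tnt per nested} is circular, though easily repaired by citing the correct statement.

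The genuine gap is in your handling of the negative directions of \Cref{conj:TNT}. You propose to show containment in the smoothable component by verifying that the total tangent space dimension at the combinatorial representative equals $\dim \Hilb^{s+1,n+3}_{\sm}\BA^n$, ``so that the point is a smooth point lying on $\Hilb_{\sm}$.'' This implication is false: equality of the tangent dimension with the dimension of the smoothable component does not place the point \emph{on} the smoothable component. Irreducible components of (nested) Hilbert schemes of points can have dimension strictly smaller than $n\cdot d_r$ --- the elementary component of $\Hilb^8\BA^4$ of dimension $25<32$ recalled in \Cref{ex:RNC} is exactly such an example --- so a point whose tangent space has dimension $n\cdot d_r$ could a priori lie only on such a smaller component, where it is simply a singular point. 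To conclude membership in $\Hilb_{\sm}$ one must exhibit it independently (e.g.~by an explicit smoothing of the nesting), and without that, ``smoothness $\Rightarrow$ unique component $\Rightarrow$ containment'' never gets started. The route consistent with the paper's verification (cf.~\Cref{fig:table non smoothability}) is different: for the excluded pairs $(n,s)$ one computes the negative tangent space at a \emph{general} point of $H^n_{\underline{\bh}}$ and finds that TNT fails there; the contrapositive of \Cref{thm:tnt per nested} then shows that $V_{\underline{\bh}}$ cannot be a generically reduced elementary component, which is precisely the negation required by the ``only if'' part of the conjecture. Note also that this generic failure cannot be deduced from failure at your special combinatorial representative, since semicontinuity only bounds the generic negative tangent dimension from above; the computation must be run at a sufficiently general (random) point of the stratum.
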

\begin{proof}
By openness of the TNT property, see \Cref{rem:expect}, in order to prove the statement it is enough to exhibit a $\underline{\bh}$-nesting having TNT. We have almost no choice for $I^{(1)}$ and we chose $I^{(2)}=\Delta_n+J_n$, from \eqref{eq:DeltaJ}. Now, the proof is a direct check using the code in \Cref{ex:dim4}.
\end{proof}

\begin{figure}[!ht]
    \begin{center}
        \begin{tikzpicture}[yscale=0.45,xscale=0.65]
\draw [->] (-1,1) --node[above left] {$n$} (-7,-11);
\draw [|-|] (-5,-12) --node[below] {$s$} (10,-12);
\node at (-5,-12) [above] {\tiny $0$};
\node at (10,-12) [above] {\tiny $n$};

\node at (-1.25,0.5) [right] {\tiny $4$};
\draw [] (-1.25-0.1,0.5) -- (-1.25+0.1,0.5);
\node at  (-6.75,-10.5)[right] {\tiny $15$};
\draw [] (-6.75-0.1,-10.5) -- (-6.75+0.1,-10.5);

\draw [thin] (0,0) rectangle (1,1);
\node at (.5,.5) [] {\tiny $8^{20}$};
\draw [thin] (1,0) rectangle (2,1);
\node at (1.5,.5) [] {\tiny $5^{9}$};
\fill [line width=0pt,fill=yellow,opacity=0.4] (2,0) rectangle (3,1);
\draw [thin] (2,0) rectangle (3,1);
\node at (2.5,.5) [] {\tiny $4^{4}$};
\draw [thin] (3,0) rectangle (4,1);
\node at (3.5,.5) [] {\tiny $5^{9}$};
\draw [thin] (4,0) rectangle (5,1);
\node at (4.5,.5) [] {\tiny $8^{16}$};
\draw [thin] (-.5,-1) rectangle (.5,0);
\node at (0,-.5) [] {\tiny $9^{25}$};
\draw [thin] (.5,-1) rectangle (1.5,0);
\node at (1,-.5) [] {\tiny $5^{10}$};
\fill [line width=0pt,fill=yellow,opacity=0.4] (1.5,-1) rectangle (2.5,0);
\draw [thin] (1.5,-1) rectangle (2.5,0);
\node at (2,-.5) [] {\tiny $3^{5}$};
\fill [line width=0pt,fill=yellow,opacity=0.4] (2.5,-1) rectangle (3.5,0);
\draw [thin] (2.5,-1) rectangle (3.5,0);
\node at (3,-.5) [] {\tiny $3^{5}$};
\draw [thin] (3.5,-1) rectangle (4.5,0);
\node at (4,-.5) [] {\tiny $5^{10}$};
\draw [thin] (4.5,-1) rectangle (5.5,0);
\node at (5,-.5) [] {\tiny $9^{20}$};
\draw [thin] (-1,-2) rectangle (0,-1);
\node at (-.5,-1.5) [] {\tiny $10^{30}$};
\draw [thin] (0,-2) rectangle (1,-1);
\node at (.5,-1.5) [] {\tiny $5^{11}$};
\fill [line width=0pt,fill=yellow,opacity=0.4] (1,-2) rectangle (2,-1);
\draw [thin] (1,-2) rectangle (2,-1);
\node at (1.5,-1.5) [] {\tiny $2^{6}$};
\fill [line width=0pt,fill=yellow,opacity=0.4] (2,-2) rectangle (3,-1);
\draw [thin] (2,-2) rectangle (3,-1);
\node at (2.5,-1.5) [] {\tiny $1^{6}$};
\fill [line width=0pt,fill=yellow,opacity=0.4] (3,-2) rectangle (4,-1);
\draw [thin] (3,-2) rectangle (4,-1);
\node at (3.5,-1.5) [] {\tiny $2^{6}$};
\draw [thin] (4,-2) rectangle (5,-1);
\node at (4.5,-1.5) [] {\tiny $5^{11}$};
\draw [thin] (5,-2) rectangle (6,-1);
\node at (5.5,-1.5) [] {\tiny $10^{24}$};
\draw [thin] (-1.5,-3) rectangle (-.5,-2);
\node at (-1,-2.5) [] {\tiny $11^{35}$};
\draw [thin] (-.5,-3) rectangle (.5,-2);
\node at (0,-2.5) [] {\tiny $5^{12}$};
\fill [line width=0pt,fill=yellow,opacity=0.4] (.5,-3) rectangle (1.5,-2);
\draw [thin] (.5,-3) rectangle (1.5,-2);
\node at (1,-2.5) [] {\tiny $1^{7}$};
\fill [line width=0pt,fill=yellow,opacity=0.4] (1.5,-3) rectangle (2.5,-2);
\fill [pattern={Lines[angle=45,distance=3pt,line width=0.05pt]},opacity=0.3,pattern color=NavyBlue] (1.5,-3) rectangle (2.5,-2);
\fill [pattern={Lines[angle=135,distance=3pt,line width=0.05pt]},opacity=0.3,pattern color=NavyBlue] (1.5,-3) rectangle (2.5,-2);
\draw [thin] (1.5,-3) rectangle (2.5,-2);
\node at (2,-2.5) [] {\tiny $-1^{7}$};
\fill [line width=0pt,fill=yellow,opacity=0.4] (2.5,-3) rectangle (3.5,-2);
\fill [pattern={Lines[angle=45,distance=3pt,line width=0.05pt]},opacity=0.3,pattern color=NavyBlue] (2.5,-3) rectangle (3.5,-2);
\fill [pattern={Lines[angle=135,distance=3pt,line width=0.05pt]},opacity=0.3,pattern color=NavyBlue] (2.5,-3) rectangle (3.5,-2);
\draw [thin] (2.5,-3) rectangle (3.5,-2);
\node at (3,-2.5) [] {\tiny $-1^{7}$};
\fill [line width=0pt,fill=yellow,opacity=0.4] (3.5,-3) rectangle (4.5,-2);
\draw [thin] (3.5,-3) rectangle (4.5,-2);
\node at (4,-2.5) [] {\tiny $1^{7}$};
\draw [thin] (4.5,-3) rectangle (5.5,-2);
\node at (5,-2.5) [] {\tiny $5^{12}$};
\draw [thin] (5.5,-3) rectangle (6.5,-2);
\node at (6,-2.5) [] {\tiny $11^{28}$};
\draw [thin] (-2,-4) rectangle (-1,-3);
\node at (-1.5,-3.5) [] {\tiny $12^{40}$};
\draw [thin] (-1,-4) rectangle (0,-3);
\node at (-.5,-3.5) [] {\tiny $5^{13}$};
\fill [line width=0pt,fill=yellow,opacity=0.4] (0,-4) rectangle (1,-3);
\fill [pattern={Lines[angle=45,distance=3pt,line width=0.05pt]},opacity=0.3,pattern color=NavyBlue] (0,-4) rectangle (1,-3);
\fill [pattern={Lines[angle=135,distance=3pt,line width=0.05pt]},opacity=0.3,pattern color=NavyBlue] (0,-4) rectangle (1,-3);
\draw [thin] (0,-4) rectangle (1,-3);
\node at (.5,-3.5) [] {\tiny $0^{8}$};
\fill [line width=0pt,fill=yellow,opacity=0.4] (1,-4) rectangle (2,-3);
\fill [pattern={Lines[angle=45,distance=3pt,line width=0.05pt]},opacity=0.3,pattern color=NavyBlue] (1,-4) rectangle (2,-3);
\fill [pattern={Lines[angle=135,distance=3pt,line width=0.05pt]},opacity=0.3,pattern color=NavyBlue] (1,-4) rectangle (2,-3);
\draw [thin] (1,-4) rectangle (2,-3);
\node at (1.5,-3.5) [] {\tiny $-3^{8}$};
\fill [line width=0pt,fill=yellow,opacity=0.4] (2,-4) rectangle (3,-3);
\fill [pattern={Lines[angle=45,distance=3pt,line width=0.05pt]},opacity=0.3,pattern color=NavyBlue] (2,-4) rectangle (3,-3);
\fill [pattern={Lines[angle=135,distance=3pt,line width=0.05pt]},opacity=0.3,pattern color=NavyBlue] (2,-4) rectangle (3,-3);
\draw [thin] (2,-4) rectangle (3,-3);
\node at (2.5,-3.5) [] {\tiny $-4^{8}$};
\fill [line width=0pt,fill=yellow,opacity=0.4] (3,-4) rectangle (4,-3);
\fill [pattern={Lines[angle=45,distance=3pt,line width=0.05pt]},opacity=0.3,pattern color=NavyBlue] (3,-4) rectangle (4,-3);
\fill [pattern={Lines[angle=135,distance=3pt,line width=0.05pt]},opacity=0.3,pattern color=NavyBlue] (3,-4) rectangle (4,-3);
\draw [thin] (3,-4) rectangle (4,-3);
\node at (3.5,-3.5) [] {\tiny $-3^{8}$};
\fill [line width=0pt,fill=yellow,opacity=0.4] (4,-4) rectangle (5,-3);
\fill [pattern={Lines[angle=45,distance=3pt,line width=0.05pt]},opacity=0.3,pattern color=NavyBlue] (4,-4) rectangle (5,-3);
\fill [pattern={Lines[angle=135,distance=3pt,line width=0.05pt]},opacity=0.3,pattern color=NavyBlue] (4,-4) rectangle (5,-3);
\draw [thin] (4,-4) rectangle (5,-3);
\node at (4.5,-3.5) [] {\tiny $0^{8}$};
\draw [thin] (5,-4) rectangle (6,-3);
\node at (5.5,-3.5) [] {\tiny $5^{13}$};
\draw [thin] (6,-4) rectangle (7,-3);
\node at (6.5,-3.5) [] {\tiny $12^{32}$};
\draw [thin] (-2.5,-5) rectangle (-1.5,-4);
\node at (-2,-4.5) [] {\tiny $13^{45}$};
\draw [thin] (-1.5,-5) rectangle (-.5,-4);
\node at (-1,-4.5) [] {\tiny $5^{14}$};
\fill [line width=0pt,fill=yellow,opacity=0.4] (-.5,-5) rectangle (.5,-4);
\fill [pattern={Lines[angle=45,distance=3pt,line width=0.05pt]},opacity=0.3,pattern color=NavyBlue] (-.5,-5) rectangle (.5,-4);
\fill [pattern={Lines[angle=135,distance=3pt,line width=0.05pt]},opacity=0.3,pattern color=NavyBlue] (-.5,-5) rectangle (.5,-4);
\draw [thin] (-.5,-5) rectangle (.5,-4);
\node at (0,-4.5) [] {\tiny $-1^{9}$};
\fill [line width=0pt,fill=yellow,opacity=0.4] (.5,-5) rectangle (1.5,-4);
\fill [pattern={Lines[angle=45,distance=3pt,line width=0.05pt]},opacity=0.3,pattern color=NavyBlue] (.5,-5) rectangle (1.5,-4);
\fill [pattern={Lines[angle=135,distance=3pt,line width=0.05pt]},opacity=0.3,pattern color=NavyBlue] (.5,-5) rectangle (1.5,-4);
\draw [thin] (.5,-5) rectangle (1.5,-4);
\node at (1,-4.5) [] {\tiny $-5^{9}$};
\fill [line width=0pt,fill=yellow,opacity=0.4] (1.5,-5) rectangle (2.5,-4);
\fill [pattern={Lines[angle=45,distance=3pt,line width=0.05pt]},opacity=0.3,pattern color=NavyBlue] (1.5,-5) rectangle (2.5,-4);
\fill [pattern={Lines[angle=135,distance=3pt,line width=0.05pt]},opacity=0.3,pattern color=NavyBlue] (1.5,-5) rectangle (2.5,-4);
\draw [thin] (1.5,-5) rectangle (2.5,-4);
\node at (2,-4.5) [] {\tiny $-7^{9}$};
\fill [line width=0pt,fill=yellow,opacity=0.4] (2.5,-5) rectangle (3.5,-4);
\fill [pattern={Lines[angle=45,distance=3pt,line width=0.05pt]},opacity=0.3,pattern color=NavyBlue] (2.5,-5) rectangle (3.5,-4);
\fill [pattern={Lines[angle=135,distance=3pt,line width=0.05pt]},opacity=0.3,pattern color=NavyBlue] (2.5,-5) rectangle (3.5,-4);
\draw [thin] (2.5,-5) rectangle (3.5,-4);
\node at (3,-4.5) [] {\tiny $-7^{9}$};
\fill [line width=0pt,fill=yellow,opacity=0.4] (3.5,-5) rectangle (4.5,-4);
\fill [pattern={Lines[angle=45,distance=3pt,line width=0.05pt]},opacity=0.3,pattern color=NavyBlue] (3.5,-5) rectangle (4.5,-4);
\fill [pattern={Lines[angle=135,distance=3pt,line width=0.05pt]},opacity=0.3,pattern color=NavyBlue] (3.5,-5) rectangle (4.5,-4);
\draw [thin] (3.5,-5) rectangle (4.5,-4);
\node at (4,-4.5) [] {\tiny $-5^{9}$};
\fill [line width=0pt,fill=yellow,opacity=0.4] (4.5,-5) rectangle (5.5,-4);
\fill [pattern={Lines[angle=45,distance=3pt,line width=0.05pt]},opacity=0.3,pattern color=NavyBlue] (4.5,-5) rectangle (5.5,-4);
\fill [pattern={Lines[angle=135,distance=3pt,line width=0.05pt]},opacity=0.3,pattern color=NavyBlue] (4.5,-5) rectangle (5.5,-4);
\draw [thin] (4.5,-5) rectangle (5.5,-4);
\node at (5,-4.5) [] {\tiny $-1^{9}$};
\draw [thin] (5.5,-5) rectangle (6.5,-4);
\node at (6,-4.5) [] {\tiny $5^{14}$};
\draw [thin] (6.5,-5) rectangle (7.5,-4);
\node at (7,-4.5) [] {\tiny $13^{36}$};
\draw [thin] (-3,-6) rectangle (-2,-5);
\node at (-2.5,-5.5) [] {\tiny $14^{50}$};
\draw [thin] (-2,-6) rectangle (-1,-5);
\node at (-1.5,-5.5) [] {\tiny $5^{15}$};
\fill [line width=0pt,fill=yellow,opacity=0.4] (-1,-6) rectangle (0,-5);
\fill [pattern={Lines[angle=45,distance=3pt,line width=0.05pt]},opacity=0.3,pattern color=NavyBlue] (-1,-6) rectangle (0,-5);
\fill [pattern={Lines[angle=135,distance=3pt,line width=0.05pt]},opacity=0.3,pattern color=NavyBlue] (-1,-6) rectangle (0,-5);
\draw [thin] (-1,-6) rectangle (0,-5);
\node at (-.5,-5.5) [] {\tiny $-2^{10}$};
\fill [line width=0pt,fill=yellow,opacity=0.4] (0,-6) rectangle (1,-5);
\fill [pattern={Lines[angle=45,distance=3pt,line width=0.05pt]},opacity=0.3,pattern color=NavyBlue] (0,-6) rectangle (1,-5);
\fill [pattern={Lines[angle=135,distance=3pt,line width=0.05pt]},opacity=0.3,pattern color=NavyBlue] (0,-6) rectangle (1,-5);
\draw [thin] (0,-6) rectangle (1,-5);
\node at (.5,-5.5) [] {\tiny $-7^{10}$};
\fill [line width=0pt,fill=yellow,opacity=0.4] (1,-6) rectangle (2,-5);
\fill [pattern={Lines[angle=45,distance=3pt,line width=0.05pt]},opacity=0.3,pattern color=NavyBlue] (1,-6) rectangle (2,-5);
\fill [pattern={Lines[angle=135,distance=3pt,line width=0.05pt]},opacity=0.3,pattern color=NavyBlue] (1,-6) rectangle (2,-5);
\draw [thin] (1,-6) rectangle (2,-5);
\node at (1.5,-5.5) [] {\tiny $-10^{10}$};
\fill [line width=0pt,fill=yellow,opacity=0.4] (2,-6) rectangle (3,-5);
\fill [pattern={Lines[angle=45,distance=3pt,line width=0.05pt]},opacity=0.3,pattern color=NavyBlue] (2,-6) rectangle (3,-5);
\fill [pattern={Lines[angle=135,distance=3pt,line width=0.05pt]},opacity=0.3,pattern color=NavyBlue] (2,-6) rectangle (3,-5);
\draw [thin] (2,-6) rectangle (3,-5);
\node at (2.5,-5.5) [] {\tiny $-11^{10}$};
\fill [line width=0pt,fill=yellow,opacity=0.4] (3,-6) rectangle (4,-5);
\fill [pattern={Lines[angle=45,distance=3pt,line width=0.05pt]},opacity=0.3,pattern color=NavyBlue] (3,-6) rectangle (4,-5);
\fill [pattern={Lines[angle=135,distance=3pt,line width=0.05pt]},opacity=0.3,pattern color=NavyBlue] (3,-6) rectangle (4,-5);
\draw [thin] (3,-6) rectangle (4,-5);
\node at (3.5,-5.5) [] {\tiny $-10^{10}$};
\fill [line width=0pt,fill=yellow,opacity=0.4] (4,-6) rectangle (5,-5);
\fill [pattern={Lines[angle=45,distance=3pt,line width=0.05pt]},opacity=0.3,pattern color=NavyBlue] (4,-6) rectangle (5,-5);
\fill [pattern={Lines[angle=135,distance=3pt,line width=0.05pt]},opacity=0.3,pattern color=NavyBlue] (4,-6) rectangle (5,-5);
\draw [thin] (4,-6) rectangle (5,-5);
\node at (4.5,-5.5) [] {\tiny $-7^{10}$};
\fill [line width=0pt,fill=yellow,opacity=0.4] (5,-6) rectangle (6,-5);
\fill [pattern={Lines[angle=45,distance=3pt,line width=0.05pt]},opacity=0.3,pattern color=NavyBlue] (5,-6) rectangle (6,-5);
\fill [pattern={Lines[angle=135,distance=3pt,line width=0.05pt]},opacity=0.3,pattern color=NavyBlue] (5,-6) rectangle (6,-5);
\draw [thin] (5,-6) rectangle (6,-5);
\node at (5.5,-5.5) [] {\tiny $-2^{10}$};
\draw [thin] (6,-6) rectangle (7,-5);
\node at (6.5,-5.5) [] {\tiny $5^{15}$};
\draw [thin] (7,-6) rectangle (8,-5);
\node at (7.5,-5.5) [] {\tiny $14^{40}$};
\draw [thin] (-3.5,-7) rectangle (-2.5,-6);
\node at (-3,-6.5) [] {\tiny $15^{55}$};
\draw [thin] (-2.5,-7) rectangle (-1.5,-6);
\node at (-2,-6.5) [] {\tiny $5^{16}$};
\fill [line width=0pt,fill=yellow,opacity=0.4] (-1.5,-7) rectangle (-.5,-6);
\fill [pattern={Lines[angle=45,distance=3pt,line width=0.05pt]},opacity=0.3,pattern color=NavyBlue] (-1.5,-7) rectangle (-.5,-6);
\fill [pattern={Lines[angle=135,distance=3pt,line width=0.05pt]},opacity=0.3,pattern color=NavyBlue] (-1.5,-7) rectangle (-.5,-6);
\draw [thin] (-1.5,-7) rectangle (-.5,-6);
\node at (-1,-6.5) [] {\tiny $-3^{11}$};
\fill [line width=0pt,fill=yellow,opacity=0.4] (-.5,-7) rectangle (.5,-6);
\fill [pattern={Lines[angle=45,distance=3pt,line width=0.05pt]},opacity=0.3,pattern color=NavyBlue] (-.5,-7) rectangle (.5,-6);
\fill [pattern={Lines[angle=135,distance=3pt,line width=0.05pt]},opacity=0.3,pattern color=NavyBlue] (-.5,-7) rectangle (.5,-6);
\draw [thin] (-.5,-7) rectangle (.5,-6);
\node at (0,-6.5) [] {\tiny $-9^{11}$};
\fill [line width=0pt,fill=yellow,opacity=0.4] (.5,-7) rectangle (1.5,-6);
\fill [pattern={Lines[angle=45,distance=3pt,line width=0.05pt]},opacity=0.3,pattern color=NavyBlue] (.5,-7) rectangle (1.5,-6);
\fill [pattern={Lines[angle=135,distance=3pt,line width=0.05pt]},opacity=0.3,pattern color=NavyBlue] (.5,-7) rectangle (1.5,-6);
\draw [thin] (.5,-7) rectangle (1.5,-6);
\node at (1,-6.5) [] {\tiny $-13^{11}$};
\fill [line width=0pt,fill=yellow,opacity=0.4] (1.5,-7) rectangle (2.5,-6);
\fill [pattern={Lines[angle=45,distance=3pt,line width=0.05pt]},opacity=0.3,pattern color=NavyBlue] (1.5,-7) rectangle (2.5,-6);
\fill [pattern={Lines[angle=135,distance=3pt,line width=0.05pt]},opacity=0.3,pattern color=NavyBlue] (1.5,-7) rectangle (2.5,-6);
\draw [thin] (1.5,-7) rectangle (2.5,-6);
\node at (2,-6.5) [] {\tiny $-15^{11}$};
\fill [line width=0pt,fill=yellow,opacity=0.4] (2.5,-7) rectangle (3.5,-6);
\fill [pattern={Lines[angle=45,distance=3pt,line width=0.05pt]},opacity=0.3,pattern color=NavyBlue] (2.5,-7) rectangle (3.5,-6);
\fill [pattern={Lines[angle=135,distance=3pt,line width=0.05pt]},opacity=0.3,pattern color=NavyBlue] (2.5,-7) rectangle (3.5,-6);
\draw [thin] (2.5,-7) rectangle (3.5,-6);
\node at (3,-6.5) [] {\tiny $-15^{11}$};
\fill [line width=0pt,fill=yellow,opacity=0.4] (3.5,-7) rectangle (4.5,-6);
\fill [pattern={Lines[angle=45,distance=3pt,line width=0.05pt]},opacity=0.3,pattern color=NavyBlue] (3.5,-7) rectangle (4.5,-6);
\fill [pattern={Lines[angle=135,distance=3pt,line width=0.05pt]},opacity=0.3,pattern color=NavyBlue] (3.5,-7) rectangle (4.5,-6);
\draw [thin] (3.5,-7) rectangle (4.5,-6);
\node at (4,-6.5) [] {\tiny $-13^{11}$};
\fill [line width=0pt,fill=yellow,opacity=0.4] (4.5,-7) rectangle (5.5,-6);
\fill [pattern={Lines[angle=45,distance=3pt,line width=0.05pt]},opacity=0.3,pattern color=NavyBlue] (4.5,-7) rectangle (5.5,-6);
\fill [pattern={Lines[angle=135,distance=3pt,line width=0.05pt]},opacity=0.3,pattern color=NavyBlue] (4.5,-7) rectangle (5.5,-6);
\draw [thin] (4.5,-7) rectangle (5.5,-6);
\node at (5,-6.5) [] {\tiny $-9^{11}$};
\fill [line width=0pt,fill=yellow,opacity=0.4] (5.5,-7) rectangle (6.5,-6);
\fill [pattern={Lines[angle=45,distance=3pt,line width=0.05pt]},opacity=0.3,pattern color=NavyBlue] (5.5,-7) rectangle (6.5,-6);
\fill [pattern={Lines[angle=135,distance=3pt,line width=0.05pt]},opacity=0.3,pattern color=NavyBlue] (5.5,-7) rectangle (6.5,-6);
\draw [thin] (5.5,-7) rectangle (6.5,-6);
\node at (6,-6.5) [] {\tiny $-3^{11}$};
\draw [thin] (6.5,-7) rectangle (7.5,-6);
\node at (7,-6.5) [] {\tiny $5^{16}$};
\draw [thin] (7.5,-7) rectangle (8.5,-6);
\node at (8,-6.5) [] {\tiny $15^{44}$};
\draw [thin] (-4,-8) rectangle (-3,-7);
\node at (-3.5,-7.5) [] {\tiny $16^{60}$};
\draw [thin] (-3,-8) rectangle (-2,-7);
\node at (-2.5,-7.5) [] {\tiny $5^{17}$};
\fill [line width=0pt,fill=yellow,opacity=0.4] (-2,-8) rectangle (-1,-7);
\fill [pattern={Lines[angle=45,distance=3pt,line width=0.05pt]},opacity=0.3,pattern color=NavyBlue] (-2,-8) rectangle (-1,-7);
\fill [pattern={Lines[angle=135,distance=3pt,line width=0.05pt]},opacity=0.3,pattern color=NavyBlue] (-2,-8) rectangle (-1,-7);
\draw [thin] (-2,-8) rectangle (-1,-7);
\node at (-1.5,-7.5) [] {\tiny $-4^{12}$};
\fill [line width=0pt,fill=yellow,opacity=0.4] (-1,-8) rectangle (0,-7);
\fill [pattern={Lines[angle=45,distance=3pt,line width=0.05pt]},opacity=0.3,pattern color=NavyBlue] (-1,-8) rectangle (0,-7);
\fill [pattern={Lines[angle=135,distance=3pt,line width=0.05pt]},opacity=0.3,pattern color=NavyBlue] (-1,-8) rectangle (0,-7);
\draw [thin] (-1,-8) rectangle (0,-7);
\node at (-.5,-7.5) [] {\tiny $-11^{12}$};
\fill [line width=0pt,fill=yellow,opacity=0.4] (0,-8) rectangle (1,-7);
\fill [pattern={Lines[angle=45,distance=3pt,line width=0.05pt]},opacity=0.3,pattern color=NavyBlue] (0,-8) rectangle (1,-7);
\fill [pattern={Lines[angle=135,distance=3pt,line width=0.05pt]},opacity=0.3,pattern color=NavyBlue] (0,-8) rectangle (1,-7);
\draw [thin] (0,-8) rectangle (1,-7);
\node at (.5,-7.5) [] {\tiny $-16^{12}$};
\fill [line width=0pt,fill=yellow,opacity=0.4] (1,-8) rectangle (2,-7);
\fill [pattern={Lines[angle=45,distance=3pt,line width=0.05pt]},opacity=0.3,pattern color=NavyBlue] (1,-8) rectangle (2,-7);
\fill [pattern={Lines[angle=135,distance=3pt,line width=0.05pt]},opacity=0.3,pattern color=NavyBlue] (1,-8) rectangle (2,-7);
\draw [thin] (1,-8) rectangle (2,-7);
\node at (1.5,-7.5) [] {\tiny $-19^{12}$};
\fill [line width=0pt,fill=yellow,opacity=0.4] (2,-8) rectangle (3,-7);
\fill [pattern={Lines[angle=45,distance=3pt,line width=0.05pt]},opacity=0.3,pattern color=NavyBlue] (2,-8) rectangle (3,-7);
\fill [pattern={Lines[angle=135,distance=3pt,line width=0.05pt]},opacity=0.3,pattern color=NavyBlue] (2,-8) rectangle (3,-7);
\draw [thin] (2,-8) rectangle (3,-7);
\node at (2.5,-7.5) [] {\tiny $-20^{12}$};
\fill [line width=0pt,fill=yellow,opacity=0.4] (3,-8) rectangle (4,-7);
\fill [pattern={Lines[angle=45,distance=3pt,line width=0.05pt]},opacity=0.3,pattern color=NavyBlue] (3,-8) rectangle (4,-7);
\fill [pattern={Lines[angle=135,distance=3pt,line width=0.05pt]},opacity=0.3,pattern color=NavyBlue] (3,-8) rectangle (4,-7);
\draw [thin] (3,-8) rectangle (4,-7);
\node at (3.5,-7.5) [] {\tiny $-19^{12}$};
\fill [line width=0pt,fill=yellow,opacity=0.4] (4,-8) rectangle (5,-7);
\fill [pattern={Lines[angle=45,distance=3pt,line width=0.05pt]},opacity=0.3,pattern color=NavyBlue] (4,-8) rectangle (5,-7);
\fill [pattern={Lines[angle=135,distance=3pt,line width=0.05pt]},opacity=0.3,pattern color=NavyBlue] (4,-8) rectangle (5,-7);
\draw [thin] (4,-8) rectangle (5,-7);
\node at (4.5,-7.5) [] {\tiny $-16^{12}$};
\fill [line width=0pt,fill=yellow,opacity=0.4] (5,-8) rectangle (6,-7);
\fill [pattern={Lines[angle=45,distance=3pt,line width=0.05pt]},opacity=0.3,pattern color=NavyBlue] (5,-8) rectangle (6,-7);
\fill [pattern={Lines[angle=135,distance=3pt,line width=0.05pt]},opacity=0.3,pattern color=NavyBlue] (5,-8) rectangle (6,-7);
\draw [thin] (5,-8) rectangle (6,-7);
\node at (5.5,-7.5) [] {\tiny $-11^{12}$};
\fill [line width=0pt,fill=yellow,opacity=0.4] (6,-8) rectangle (7,-7);
\fill [pattern={Lines[angle=45,distance=3pt,line width=0.05pt]},opacity=0.3,pattern color=NavyBlue] (6,-8) rectangle (7,-7);
\fill [pattern={Lines[angle=135,distance=3pt,line width=0.05pt]},opacity=0.3,pattern color=NavyBlue] (6,-8) rectangle (7,-7);
\draw [thin] (6,-8) rectangle (7,-7);
\node at (6.5,-7.5) [] {\tiny $-4^{12}$};
\draw [thin] (7,-8) rectangle (8,-7);
\node at (7.5,-7.5) [] {\tiny $5^{17}$};
\draw [thin] (8,-8) rectangle (9,-7);
\node at (8.5,-7.5) [] {\tiny $16^{48}$};
\draw [thin] (-4.5,-9) rectangle (-3.5,-8);
\node at (-4,-8.5) [] {\tiny $17^{65}$};
\draw [thin] (-3.5,-9) rectangle (-2.5,-8);
\node at (-3,-8.5) [] {\tiny $5^{18}$};
\fill [line width=0pt,fill=yellow,opacity=0.4] (-2.5,-9) rectangle (-1.5,-8);
\fill [pattern={Lines[angle=45,distance=3pt,line width=0.05pt]},opacity=0.3,pattern color=NavyBlue] (-2.5,-9) rectangle (-1.5,-8);
\fill [pattern={Lines[angle=135,distance=3pt,line width=0.05pt]},opacity=0.3,pattern color=NavyBlue] (-2.5,-9) rectangle (-1.5,-8);
\draw [thin] (-2.5,-9) rectangle (-1.5,-8);
\node at (-2,-8.5) [] {\tiny $-5^{13}$};
\fill [line width=0pt,fill=yellow,opacity=0.4] (-1.5,-9) rectangle (-.5,-8);
\fill [pattern={Lines[angle=45,distance=3pt,line width=0.05pt]},opacity=0.3,pattern color=NavyBlue] (-1.5,-9) rectangle (-.5,-8);
\fill [pattern={Lines[angle=135,distance=3pt,line width=0.05pt]},opacity=0.3,pattern color=NavyBlue] (-1.5,-9) rectangle (-.5,-8);
\draw [thin] (-1.5,-9) rectangle (-.5,-8);
\node at (-1,-8.5) [] {\tiny $-13^{13}$};
\fill [line width=0pt,fill=yellow,opacity=0.4] (-.5,-9) rectangle (.5,-8);
\fill [pattern={Lines[angle=45,distance=3pt,line width=0.05pt]},opacity=0.3,pattern color=NavyBlue] (-.5,-9) rectangle (.5,-8);
\fill [pattern={Lines[angle=135,distance=3pt,line width=0.05pt]},opacity=0.3,pattern color=NavyBlue] (-.5,-9) rectangle (.5,-8);
\draw [thin] (-.5,-9) rectangle (.5,-8);
\node at (0,-8.5) [] {\tiny $-19^{13}$};
\fill [line width=0pt,fill=yellow,opacity=0.4] (.5,-9) rectangle (1.5,-8);
\fill [pattern={Lines[angle=45,distance=3pt,line width=0.05pt]},opacity=0.3,pattern color=NavyBlue] (.5,-9) rectangle (1.5,-8);
\fill [pattern={Lines[angle=135,distance=3pt,line width=0.05pt]},opacity=0.3,pattern color=NavyBlue] (.5,-9) rectangle (1.5,-8);
\draw [thin] (.5,-9) rectangle (1.5,-8);
\node at (1,-8.5) [] {\tiny $-23^{13}$};
\fill [line width=0pt,fill=yellow,opacity=0.4] (1.5,-9) rectangle (2.5,-8);
\fill [pattern={Lines[angle=45,distance=3pt,line width=0.05pt]},opacity=0.3,pattern color=NavyBlue] (1.5,-9) rectangle (2.5,-8);
\fill [pattern={Lines[angle=135,distance=3pt,line width=0.05pt]},opacity=0.3,pattern color=NavyBlue] (1.5,-9) rectangle (2.5,-8);
\draw [thin] (1.5,-9) rectangle (2.5,-8);
\node at (2,-8.5) [] {\tiny $-25^{13}$};
\fill [line width=0pt,fill=yellow,opacity=0.4] (2.5,-9) rectangle (3.5,-8);
\fill [pattern={Lines[angle=45,distance=3pt,line width=0.05pt]},opacity=0.3,pattern color=NavyBlue] (2.5,-9) rectangle (3.5,-8);
\fill [pattern={Lines[angle=135,distance=3pt,line width=0.05pt]},opacity=0.3,pattern color=NavyBlue] (2.5,-9) rectangle (3.5,-8);
\draw [thin] (2.5,-9) rectangle (3.5,-8);
\node at (3,-8.5) [] {\tiny $-25^{13}$};
\fill [line width=0pt,fill=yellow,opacity=0.4] (3.5,-9) rectangle (4.5,-8);
\fill [pattern={Lines[angle=45,distance=3pt,line width=0.05pt]},opacity=0.3,pattern color=NavyBlue] (3.5,-9) rectangle (4.5,-8);
\fill [pattern={Lines[angle=135,distance=3pt,line width=0.05pt]},opacity=0.3,pattern color=NavyBlue] (3.5,-9) rectangle (4.5,-8);
\draw [thin] (3.5,-9) rectangle (4.5,-8);
\node at (4,-8.5) [] {\tiny $-23^{13}$};
\fill [line width=0pt,fill=yellow,opacity=0.4] (4.5,-9) rectangle (5.5,-8);
\fill [pattern={Lines[angle=45,distance=3pt,line width=0.05pt]},opacity=0.3,pattern color=NavyBlue] (4.5,-9) rectangle (5.5,-8);
\fill [pattern={Lines[angle=135,distance=3pt,line width=0.05pt]},opacity=0.3,pattern color=NavyBlue] (4.5,-9) rectangle (5.5,-8);
\draw [thin] (4.5,-9) rectangle (5.5,-8);
\node at (5,-8.5) [] {\tiny $-19^{13}$};
\fill [line width=0pt,fill=yellow,opacity=0.4] (5.5,-9) rectangle (6.5,-8);
\fill [pattern={Lines[angle=45,distance=3pt,line width=0.05pt]},opacity=0.3,pattern color=NavyBlue] (5.5,-9) rectangle (6.5,-8);
\fill [pattern={Lines[angle=135,distance=3pt,line width=0.05pt]},opacity=0.3,pattern color=NavyBlue] (5.5,-9) rectangle (6.5,-8);
\draw [thin] (5.5,-9) rectangle (6.5,-8);
\node at (6,-8.5) [] {\tiny $-13^{13}$};
\fill [line width=0pt,fill=yellow,opacity=0.4] (6.5,-9) rectangle (7.5,-8);
\fill [pattern={Lines[angle=45,distance=3pt,line width=0.05pt]},opacity=0.3,pattern color=NavyBlue] (6.5,-9) rectangle (7.5,-8);
\fill [pattern={Lines[angle=135,distance=3pt,line width=0.05pt]},opacity=0.3,pattern color=NavyBlue] (6.5,-9) rectangle (7.5,-8);
\draw [thin] (6.5,-9) rectangle (7.5,-8);
\node at (7,-8.5) [] {\tiny $-5^{13}$};
\draw [thin] (7.5,-9) rectangle (8.5,-8);
\node at (8,-8.5) [] {\tiny $5^{18}$};
\draw [thin] (8.5,-9) rectangle (9.5,-8);
\node at (9,-8.5) [] {\tiny $17^{52}$};
\draw [thin] (-5,-10) rectangle (-4,-9);
\node at (-4.5,-9.5) [] {\tiny $18^{70}$};
\draw [thin] (-4,-10) rectangle (-3,-9);
\node at (-3.5,-9.5) [] {\tiny $5^{19}$};
\fill [line width=0pt,fill=yellow,opacity=0.4] (-3,-10) rectangle (-2,-9);
\fill [pattern={Lines[angle=45,distance=3pt,line width=0.05pt]},opacity=0.3,pattern color=NavyBlue] (-3,-10) rectangle (-2,-9);
\fill [pattern={Lines[angle=135,distance=3pt,line width=0.05pt]},opacity=0.3,pattern color=NavyBlue] (-3,-10) rectangle (-2,-9);
\draw [thin] (-3,-10) rectangle (-2,-9);
\node at (-2.5,-9.5) [] {\tiny $-6^{14}$};
\fill [line width=0pt,fill=yellow,opacity=0.4] (-2,-10) rectangle (-1,-9);
\fill [pattern={Lines[angle=45,distance=3pt,line width=0.05pt]},opacity=0.3,pattern color=NavyBlue] (-2,-10) rectangle (-1,-9);
\fill [pattern={Lines[angle=135,distance=3pt,line width=0.05pt]},opacity=0.3,pattern color=NavyBlue] (-2,-10) rectangle (-1,-9);
\draw [thin] (-2,-10) rectangle (-1,-9);
\node at (-1.5,-9.5) [] {\tiny $-15^{14}$};
\fill [line width=0pt,fill=yellow,opacity=0.4] (-1,-10) rectangle (0,-9);
\fill [pattern={Lines[angle=45,distance=3pt,line width=0.05pt]},opacity=0.3,pattern color=NavyBlue] (-1,-10) rectangle (0,-9);
\fill [pattern={Lines[angle=135,distance=3pt,line width=0.05pt]},opacity=0.3,pattern color=NavyBlue] (-1,-10) rectangle (0,-9);
\draw [thin] (-1,-10) rectangle (0,-9);
\node at (-.5,-9.5) [] {\tiny $-22^{14}$};
\fill [line width=0pt,fill=yellow,opacity=0.4] (0,-10) rectangle (1,-9);
\fill [pattern={Lines[angle=45,distance=3pt,line width=0.05pt]},opacity=0.3,pattern color=NavyBlue] (0,-10) rectangle (1,-9);
\fill [pattern={Lines[angle=135,distance=3pt,line width=0.05pt]},opacity=0.3,pattern color=NavyBlue] (0,-10) rectangle (1,-9);
\draw [thin] (0,-10) rectangle (1,-9);
\node at (.5,-9.5) [] {\tiny $-27^{14}$};
\fill [line width=0pt,fill=yellow,opacity=0.4] (1,-10) rectangle (2,-9);
\fill [pattern={Lines[angle=45,distance=3pt,line width=0.05pt]},opacity=0.3,pattern color=NavyBlue] (1,-10) rectangle (2,-9);
\fill [pattern={Lines[angle=135,distance=3pt,line width=0.05pt]},opacity=0.3,pattern color=NavyBlue] (1,-10) rectangle (2,-9);
\draw [thin] (1,-10) rectangle (2,-9);
\node at (1.5,-9.5) [] {\tiny $-30^{14}$};
\fill [line width=0pt,fill=yellow,opacity=0.4] (2,-10) rectangle (3,-9);
\fill [pattern={Lines[angle=45,distance=3pt,line width=0.05pt]},opacity=0.3,pattern color=NavyBlue] (2,-10) rectangle (3,-9);
\fill [pattern={Lines[angle=135,distance=3pt,line width=0.05pt]},opacity=0.3,pattern color=NavyBlue] (2,-10) rectangle (3,-9);
\draw [thin] (2,-10) rectangle (3,-9);
\node at (2.5,-9.5) [] {\tiny $-31^{14}$};
\fill [line width=0pt,fill=yellow,opacity=0.4] (3,-10) rectangle (4,-9);
\fill [pattern={Lines[angle=45,distance=3pt,line width=0.05pt]},opacity=0.3,pattern color=NavyBlue] (3,-10) rectangle (4,-9);
\fill [pattern={Lines[angle=135,distance=3pt,line width=0.05pt]},opacity=0.3,pattern color=NavyBlue] (3,-10) rectangle (4,-9);
\draw [thin] (3,-10) rectangle (4,-9);
\node at (3.5,-9.5) [] {\tiny $-30^{14}$};
\fill [line width=0pt,fill=yellow,opacity=0.4] (4,-10) rectangle (5,-9);
\fill [pattern={Lines[angle=45,distance=3pt,line width=0.05pt]},opacity=0.3,pattern color=NavyBlue] (4,-10) rectangle (5,-9);
\fill [pattern={Lines[angle=135,distance=3pt,line width=0.05pt]},opacity=0.3,pattern color=NavyBlue] (4,-10) rectangle (5,-9);
\draw [thin] (4,-10) rectangle (5,-9);
\node at (4.5,-9.5) [] {\tiny $-27^{14}$};
\fill [line width=0pt,fill=yellow,opacity=0.4] (5,-10) rectangle (6,-9);
\fill [pattern={Lines[angle=45,distance=3pt,line width=0.05pt]},opacity=0.3,pattern color=NavyBlue] (5,-10) rectangle (6,-9);
\fill [pattern={Lines[angle=135,distance=3pt,line width=0.05pt]},opacity=0.3,pattern color=NavyBlue] (5,-10) rectangle (6,-9);
\draw [thin] (5,-10) rectangle (6,-9);
\node at (5.5,-9.5) [] {\tiny $-22^{14}$};
\fill [line width=0pt,fill=yellow,opacity=0.4] (6,-10) rectangle (7,-9);
\fill [pattern={Lines[angle=45,distance=3pt,line width=0.05pt]},opacity=0.3,pattern color=NavyBlue] (6,-10) rectangle (7,-9);
\fill [pattern={Lines[angle=135,distance=3pt,line width=0.05pt]},opacity=0.3,pattern color=NavyBlue] (6,-10) rectangle (7,-9);
\draw [thin] (6,-10) rectangle (7,-9);
\node at (6.5,-9.5) [] {\tiny $-15^{14}$};
\fill [line width=0pt,fill=yellow,opacity=0.4] (7,-10) rectangle (8,-9);
\fill [pattern={Lines[angle=45,distance=3pt,line width=0.05pt]},opacity=0.3,pattern color=NavyBlue] (7,-10) rectangle (8,-9);
\fill [pattern={Lines[angle=135,distance=3pt,line width=0.05pt]},opacity=0.3,pattern color=NavyBlue] (7,-10) rectangle (8,-9);
\draw [thin] (7,-10) rectangle (8,-9);
\node at (7.5,-9.5) [] {\tiny $-6^{14}$};
\draw [thin] (8,-10) rectangle (9,-9);
\node at (8.5,-9.5) [] {\tiny $5^{19}$};
\draw [thin] (9,-10) rectangle (10,-9);
\node at (9.5,-9.5) [] {\tiny $18^{56}$};
\draw [thin] (-5.5,-11) rectangle (-4.5,-10);
\node at (-5,-10.5) [] {\tiny $19^{75}$};
\draw [thin] (-4.5,-11) rectangle (-3.5,-10);
\node at (-4,-10.5) [] {\tiny $5^{20}$};
\fill [line width=0pt,fill=yellow,opacity=0.4] (-3.5,-11) rectangle (-2.5,-10);
\fill [pattern={Lines[angle=45,distance=3pt,line width=0.05pt]},opacity=0.3,pattern color=NavyBlue] (-3.5,-11) rectangle (-2.5,-10);
\fill [pattern={Lines[angle=135,distance=3pt,line width=0.05pt]},opacity=0.3,pattern color=NavyBlue] (-3.5,-11) rectangle (-2.5,-10);
\draw [thin] (-3.5,-11) rectangle (-2.5,-10);
\node at (-3,-10.5) [] {\tiny $-7^{15}$};
\fill [line width=0pt,fill=yellow,opacity=0.4] (-2.5,-11) rectangle (-1.5,-10);
\fill [pattern={Lines[angle=45,distance=3pt,line width=0.05pt]},opacity=0.3,pattern color=NavyBlue] (-2.5,-11) rectangle (-1.5,-10);
\fill [pattern={Lines[angle=135,distance=3pt,line width=0.05pt]},opacity=0.3,pattern color=NavyBlue] (-2.5,-11) rectangle (-1.5,-10);
\draw [thin] (-2.5,-11) rectangle (-1.5,-10);
\node at (-2,-10.5) [] {\tiny $-17^{15}$};
\fill [line width=0pt,fill=yellow,opacity=0.4] (-1.5,-11) rectangle (-.5,-10);
\fill [pattern={Lines[angle=45,distance=3pt,line width=0.05pt]},opacity=0.3,pattern color=NavyBlue] (-1.5,-11) rectangle (-.5,-10);
\fill [pattern={Lines[angle=135,distance=3pt,line width=0.05pt]},opacity=0.3,pattern color=NavyBlue] (-1.5,-11) rectangle (-.5,-10);
\draw [thin] (-1.5,-11) rectangle (-.5,-10);
\node at (-1,-10.5) [] {\tiny $-25^{15}$};
\fill [line width=0pt,fill=yellow,opacity=0.4] (-.5,-11) rectangle (.5,-10);
\fill [pattern={Lines[angle=45,distance=3pt,line width=0.05pt]},opacity=0.3,pattern color=NavyBlue] (-.5,-11) rectangle (.5,-10);
\fill [pattern={Lines[angle=135,distance=3pt,line width=0.05pt]},opacity=0.3,pattern color=NavyBlue] (-.5,-11) rectangle (.5,-10);
\draw [thin] (-.5,-11) rectangle (.5,-10);
\node at (0,-10.5) [] {\tiny $-31^{15}$};
\fill [line width=0pt,fill=yellow,opacity=0.4] (.5,-11) rectangle (1.5,-10);
\fill [pattern={Lines[angle=45,distance=3pt,line width=0.05pt]},opacity=0.3,pattern color=NavyBlue] (.5,-11) rectangle (1.5,-10);
\fill [pattern={Lines[angle=135,distance=3pt,line width=0.05pt]},opacity=0.3,pattern color=NavyBlue] (.5,-11) rectangle (1.5,-10);
\draw [thin] (.5,-11) rectangle (1.5,-10);
\node at (1,-10.5) [] {\tiny $-35^{15}$};
\fill [line width=0pt,fill=yellow,opacity=0.4] (1.5,-11) rectangle (2.5,-10);
\fill [pattern={Lines[angle=45,distance=3pt,line width=0.05pt]},opacity=0.3,pattern color=NavyBlue] (1.5,-11) rectangle (2.5,-10);
\fill [pattern={Lines[angle=135,distance=3pt,line width=0.05pt]},opacity=0.3,pattern color=NavyBlue] (1.5,-11) rectangle (2.5,-10);
\draw [thin] (1.5,-11) rectangle (2.5,-10);
\node at (2,-10.5) [] {\tiny $-37^{15}$};
\fill [line width=0pt,fill=yellow,opacity=0.4] (2.5,-11) rectangle (3.5,-10);
\fill [pattern={Lines[angle=45,distance=3pt,line width=0.05pt]},opacity=0.3,pattern color=NavyBlue] (2.5,-11) rectangle (3.5,-10);
\fill [pattern={Lines[angle=135,distance=3pt,line width=0.05pt]},opacity=0.3,pattern color=NavyBlue] (2.5,-11) rectangle (3.5,-10);
\draw [thin] (2.5,-11) rectangle (3.5,-10);
\node at (3,-10.5) [] {\tiny $-37^{15}$};
\fill [line width=0pt,fill=yellow,opacity=0.4] (3.5,-11) rectangle (4.5,-10);
\fill [pattern={Lines[angle=45,distance=3pt,line width=0.05pt]},opacity=0.3,pattern color=NavyBlue] (3.5,-11) rectangle (4.5,-10);
\fill [pattern={Lines[angle=135,distance=3pt,line width=0.05pt]},opacity=0.3,pattern color=NavyBlue] (3.5,-11) rectangle (4.5,-10);
\draw [thin] (3.5,-11) rectangle (4.5,-10);
\node at (4,-10.5) [] {\tiny $-35^{15}$};
\fill [line width=0pt,fill=yellow,opacity=0.4] (4.5,-11) rectangle (5.5,-10);
\fill [pattern={Lines[angle=45,distance=3pt,line width=0.05pt]},opacity=0.3,pattern color=NavyBlue] (4.5,-11) rectangle (5.5,-10);
\fill [pattern={Lines[angle=135,distance=3pt,line width=0.05pt]},opacity=0.3,pattern color=NavyBlue] (4.5,-11) rectangle (5.5,-10);
\draw [thin] (4.5,-11) rectangle (5.5,-10);
\node at (5,-10.5) [] {\tiny $-31^{15}$};
\fill [line width=0pt,fill=yellow,opacity=0.4] (5.5,-11) rectangle (6.5,-10);
\fill [pattern={Lines[angle=45,distance=3pt,line width=0.05pt]},opacity=0.3,pattern color=NavyBlue] (5.5,-11) rectangle (6.5,-10);
\fill [pattern={Lines[angle=135,distance=3pt,line width=0.05pt]},opacity=0.3,pattern color=NavyBlue] (5.5,-11) rectangle (6.5,-10);
\draw [thin] (5.5,-11) rectangle (6.5,-10);
\node at (6,-10.5) [] {\tiny $-25^{15}$};
\fill [line width=0pt,fill=yellow,opacity=0.4] (6.5,-11) rectangle (7.5,-10);
\fill [pattern={Lines[angle=45,distance=3pt,line width=0.05pt]},opacity=0.3,pattern color=NavyBlue] (6.5,-11) rectangle (7.5,-10);
\fill [pattern={Lines[angle=135,distance=3pt,line width=0.05pt]},opacity=0.3,pattern color=NavyBlue] (6.5,-11) rectangle (7.5,-10);
\draw [thin] (6.5,-11) rectangle (7.5,-10);
\node at (7,-10.5) [] {\tiny $-17^{15}$};
\fill [line width=0pt,fill=yellow,opacity=0.4] (7.5,-11) rectangle (8.5,-10);
\fill [pattern={Lines[angle=45,distance=3pt,line width=0.05pt]},opacity=0.3,pattern color=NavyBlue] (7.5,-11) rectangle (8.5,-10);
\fill [pattern={Lines[angle=135,distance=3pt,line width=0.05pt]},opacity=0.3,pattern color=NavyBlue] (7.5,-11) rectangle (8.5,-10);
\draw [thin] (7.5,-11) rectangle (8.5,-10);
\node at (8,-10.5) [] {\tiny $-7^{15}$};
\draw [thin] (8.5,-11) rectangle (9.5,-10);
\node at (9,-10.5) [] {\tiny $5^{20}$};
\draw [thin] (9.5,-11) rectangle (10.5,-10);
\node at (10,-10.5) [] {\tiny $19^{60}$};

\draw [NavyBlue,thick] (-3.5,-11) -- (-3.5,-10) -- (-3,-10) -- (-3,-9) -- (-2.5,-9) -- (-2.5,-8) -- (-2,-8) -- (-2,-7) -- (-1.5,-7) -- (-1.5,-6) -- (-1,-6) -- (-1,-5) -- (-0.5,-5) -- (-0.5,-4) -- (0,-4) -- (0,-3) -- (1.5,-3) -- (1.5,-2) -- (2.5,-2);

\draw [xscale=-1,NavyBlue,xshift=-5cm,thick] (-3.5,-11) -- (-3.5,-10) --  (-3,-10) -- (-3,-9) -- (-2.5,-9) -- (-2.5,-8) -- (-2,-8) -- (-2,-7) -- (-1.5,-7) -- (-1.5,-6) -- (-1,-6) -- (-1,-5) -- (-0.5,-5) -- (-0.5,-4) -- (0,-4) -- (0,-3) -- (1.5,-3) -- (1.5,-2) -- (2.5,-2);

\begin{scope}[shift={(0,-0.3)}]
\fill [line width=0pt,fill=yellow,opacity=0.4] (6,-13) rectangle (5,-14);
\node at (7.25,-13.6) [] {\tiny TNT property};

\fill [pattern={Lines[angle=45,distance=3pt,line width=0.05pt]},opacity=0.3,pattern color=NavyBlue] (-4,-13) rectangle (-3,-14);
\fill [pattern={Lines[angle=135,distance=3pt,line width=0.05pt]},opacity=0.3,pattern color=NavyBlue] (-4,-13) rectangle (-3,-14);
\draw [NavyBlue,thick] (-4,-13) rectangle (-3,-14);
\node at (0,-13.5) [] {\tiny $\dim (H_{\bh}\times \mathbb{A}^n) \geqslant \dim \Hilb^{s+1,n+3}_{\text{sm}}(\mathbb{A}^n)$};
\end{scope}
        \end{tikzpicture}
    \end{center}
    \caption{Each box is associated to a pair $(n,s)$, $4\leqslant n \leqslant 15$ and $0\leqslant s\leqslant n$ and the corresponding Hilbert function $\underline{\bh}=\big((1,s),(1,n,2)\big)$. The label $a^b$ of a box means that: $a$ is the difference in \eqref{eq:gap}; $b$ is $\mathsf t^{=-1}_{[\underline{I}]} \Hilb\mathbb{A}^n$, for  $[I]\in H_{\underline{\bh}}^n$ generic.}
    \label{fig:table non smoothability}
\end{figure}
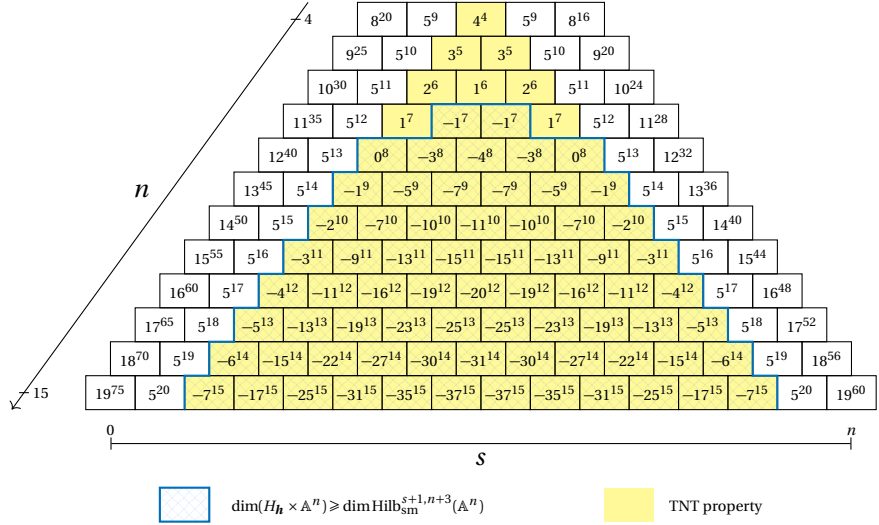
}

\section{\texorpdfstring{Proof of \Cref{THMINTRO:B}}{Proof of Theorem B}}
\label{sec4}
This section is devoted to the proof of \Cref{THMINTRO:B} from the introduction. First we prove a technical lemma relating the tangent space to a point $[\underline{I}]\in\Hilb\BA^n$ having TNT and the tangent space to a point obtained by sandwiching some power of the maximal ideal $\Fm$ in the nesting $\underline{I}$, and then we prove our main theorem. Along the way, we prove a stronger result only involving homogeneous ideals.
{ 
\begin{lemma}\label{lemma:tech3}
Let $[\underline{I}]\in \Hilb^+\BA^n$ be a point having TNT.  Assume that there is a  $j\in\Set{1,\ldots,r}$ such that 
    \[
    I^{(j)} \supsetneq \Fm^k \supsetneq I^{(j+1)},
    \]
    for some $k>0$, where we assume $I^{(r+1)}=0$ by convention. Consider the point
    \[
 p_{\underline{I},k}  = [I^{(1)}\supset\cdots \supset I^{(j)}\supset\Fm^k\supset I^{(j+1)}\supset\cdots\supset I^{(r)}]\in\Hilb \BA^n.
    \]
Then, there is an isomorphism
    \[
    \mathsf{T}_{p_{\underline{I},k}}^{<0}\Hilb \BA^n \cong \mathsf{T}_{[\underline{I}]}^{<0}\Hilb \BA^n \oplus  
        \Hom_{R}  \left( \Fm^k/I^{(j+1)},I^{(j)}/\Fm^k \right) . 
    \]  
\end{lemma}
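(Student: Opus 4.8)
The plan is to compute the tangent space at $p_{\underline I,k}$ via the forgetful map that drops the inserted term $\Fm^k$, and then descend to negative tangents. Recall from \Cref{rem:deftheta} that a tangent vector to a nesting is a tuple $(\varphi_i)_i$ with $\varphi_i\in\Hom_R(I^{(i)},R/I^{(i)})$ compatible along each inclusion, meaning that for $I^{(i)}\supset I^{(i+1)}$ the two induced maps $I^{(i+1)}\to R/I^{(i)}$ (restrict‑then‑$\varphi_i$, and $\varphi_{i+1}$‑then‑project) agree. First I would write a tangent vector at $p_{\underline I,k}$ as $(\psi_1,\dots,\psi_j,\chi,\psi_{j+1},\dots,\psi_r)$ with $\chi\in\Hom_R(\Fm^k,R/\Fm^k)$, and let $F$ forget $\chi$. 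A short diagram chase shows that compatibility of $\chi$ with both neighbours forces the direct compatibility of $(\psi_j,\psi_{j+1})$, so $F$ lands in $\mathsf T_{[\underline I]}\Hilb\BA^n$; and setting all $\psi_i=0$ and unwinding the two compatibility conditions identifies
\[
\ker F=\Set{\chi | \chi(I^{(j+1)})=0,\ \chi(\Fm^k)\subseteq I^{(j)}/\Fm^k}=\Hom_R\!\left(\Fm^k/I^{(j+1)},I^{(j)}/\Fm^k\right),
\]
which is exactly the summand appearing in the statement; call it $K$.

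Next I would pass to the negative tangent spaces of \Cref{def:negativetangents}. Since $(R/\Fm^k)_{\geqslant k}=0$, evaluating the non‑negativity condition in the $\Fm^k$‑slot at level $k$ shows that the only filtration‑preserving $\chi$ is $\chi=0$; hence $K\cap\mathsf T^{\geqslant0}_{p_{\underline I,k}}\Hilb\BA^n=0$, so $K$ injects into $\mathsf T^{<0}_{p_{\underline I,k}}\Hilb\BA^n$. As $F$ visibly carries non‑negative tangents to non‑negative tangents, it descends to a map $\bar F\colon\mathsf T^{<0}_{p_{\underline I,k}}\Hilb\BA^n\to\mathsf T^{<0}_{[\underline I]}\Hilb\BA^n$.

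I would then prove $\ker\bar F\cong K$. The inclusion $\supseteq$ is clear. For $\subseteq$, take $t=(\psi_i,\chi)$ with $F(t)$ non‑negative, so every $\psi_i$ preserves its filtration. Because $\Fm^k\subseteq I^{(j)}$ we have $(R/I^{(j)})_{\geqslant k}=0$, so non‑negativity of $\psi_j$ at level $k$ forces $\psi_j(\Fm^k)=0$ in $R/I^{(j)}$; because $I^{(j+1)}\subseteq\Fm^k$ we have $(R/I^{(j+1)})_{\geqslant k}=\Fm^k/I^{(j+1)}$, so non‑negativity of $\psi_{j+1}$ forces $\psi_{j+1}(I^{(j+1)})\subseteq\Fm^k/I^{(j+1)}$. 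Feeding these two facts back into the compatibility relations defining $t$ yields $\chi(\Fm^k)\subseteq I^{(j)}/\Fm^k$ and $\chi(I^{(j+1)})=0$, i.e.\ $\chi\in K$. Thus $\ker\bar F$ equals the image of $K$, which is $\cong K$ by the previous paragraph. Finally, surjectivity of $\bar F$ is where the TNT hypothesis (\Cref{def:TNT}) is used: $\mathsf T^{<0}_{[\underline I]}\Hilb\BA^n$ is spanned by the classes $\theta(\partial_{x_l})$, and each lifts tautologically to the translation tangent vector of the enlarged nesting (put the reduction of $\partial_{x_l}$ modulo $\Fm^k$ in the $\Fm^k$‑slot), whose image under $\bar F$ is $\theta(\partial_{x_l})$. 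Hence the sequence $0\to K\to\mathsf T^{<0}_{p_{\underline I,k}}\Hilb\BA^n\xrightarrow{\bar F}\mathsf T^{<0}_{[\underline I]}\Hilb\BA^n\to0$ of $\BC$‑vector spaces is exact, and being a sequence of vector spaces it splits, giving the asserted isomorphism.

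The hard part is the surjectivity of $\bar F$: the forgetful map on full tangent spaces is not surjective in general, so no formal argument suffices, and it is precisely TNT that furnishes enough negative directions—the translations—to cover all of $\mathsf T^{<0}_{[\underline I]}\Hilb\BA^n$. The second point requiring care is that $\ker\bar F$ is the \emph{whole} module $K$ and not merely a subspace; this hinges on the two vanishing/identification statements $(R/I^{(j)})_{\geqslant k}=0$ and $(R/I^{(j+1)})_{\geqslant k}=\Fm^k/I^{(j+1)}$, which are exactly the consequences of the sandwiching $I^{(j)}\supseteq\Fm^k\supseteq I^{(j+1)}$ and make the non‑negativity of $\psi_j$ and $\psi_{j+1}$ automatically push $\chi$ into $K$.
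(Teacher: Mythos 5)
Your proof is correct and follows essentially the same route as the paper's: the paper likewise identifies the new summand as the tangent vectors supported purely in the $\Fm^k$-slot (the same commuting-diagram conditions yield $\Hom_{R}\left(\Fm^k/I^{(j+1)},I^{(j)}/\Fm^k\right)$) and uses TNT to account for the rest of $\mathsf T^{<0}_{p_{\underline{I},k}}\Hilb\BA^n$ via the derivation vectors $(\partial_{x_i}^{\underline{I}},\partial_{x_i}^{\Fm^k})$. Your packaging as a split short exact sequence for the forgetful map, together with the explicit check that its kernel in the negative tangent space is exactly $K$, is just a slightly more detailed organisation of the paper's direct-sum decomposition.
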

\begin{proof} Recall that 
  the TNT property implies that the tangent space $\mathsf T_{[{\underline{I}}]}\Hilb\BA^n$ decomposes as 
  \[
  \mathsf T _{[{\underline{I}}]}\Hilb\BA^n \cong \left\langle \partial_{x_i}^{\underline{I}} \ \big|\ i=1,\ldots,n \right\rangle_{\BC}  \oplus \mathsf T^{\geqslant 0}_{[{\underline{I}}]}\Hilb\BA^n,
  \]
  where $\partial_{x_i}^{\underline{I}}$ denotes the sequence $(\pi^{(i)}\circ\partial_{x_j})_{i=1}^r$, see \Cref{rem:deftheta}. In particular,  we have $\mathsf T ^{<0}_{[{\underline{I}}]}\Hilb\BA^n \cong \left\langle \partial_{x_i}^{\underline{I}} \ \big|\ i=1,\ldots,n \right\rangle_{\BC}$. Although the negative tangent space is a quotient of the whole tangent space, with abuse of notation we will make an extensive use of this isomorphism and we will interpret  $\mathsf T ^{<0}_{[{\underline{I}}]}\Hilb\BA^n $ as a subspace of $\mathsf T _{[{\underline{I}}]}\Hilb\BA^n $.
  
Recall also that 
\begin{equation}\label{eq:tgmk}
    \mathsf T_{[\Fm^k]}\Hilb\BA^n\cong\mathsf T_{[\Fm^k]}^{<0}\Hilb\BA^n \cong  \Hom_R(\Fm^k,R/\Fm^k)_{-1}\cong\Hom_{\BC}(R_k,R_{k-1}) .
\end{equation}
Denote by $\mathsf D  $ the subspace
\[
\mathsf D= \left\langle{ (\partial_{x_i}^{\underline{I}},\partial_{x_i}^{\Fm^k})\ \big|\ i=1,\ldots,n }\right\rangle_{\BC} \subset \mathsf{T}_{p_{\underline{I},k}}^{<0}\Hilb \BA^n \subset \left\langle \partial_{x_i}^{\underline{I}} \ \big|\ i=1,\ldots,n \right\rangle_{\BC}\oplus \Hom_R(\Fm^k,\Fm^{k-1}/\Fm^k).
\]
  We look for a complement of $\mathsf D$, i.e.~we want to characterise tangents $\varphi_k\in \mathsf T_{[\Fm^k]}\Hilb \BA^n $ making the following diagram 
    \begin{equation}
        \label{eq:diaginc}
        \begin{tikzcd}
    I^{(j)}\arrow[d,"0"']& \Fm^k \arrow[l,hook',"\iota_j"']\arrow[d,"\varphi_k"]& I^{(j+1)}\arrow[l,hook',"\iota_{j+1}"']\arrow[d,"0"]\\
    R/I^{(j)}& R/\Fm^k \arrow[l,two heads,"\pi_j"]& R/I^{(j+1)}\arrow[l,two heads,"\pi_{j+1}"]
\end{tikzcd}
    \end{equation}
commute. 

First, the condition $\pi_j\circ \varphi_k\equiv 0 $ implies that $\varphi_k(\Fm^k)\subset I^{(j)}/\Fm^k$. On the other hand, since $I^{(j+1)}\subset \ker \varphi_k$ the morphism $\varphi_k$ factors uniquely through the kernel $\Fm^k/I^{(j+1)}$ of $\pi_{j+1}$. This gives the decomposition
\[
\mathsf{T}_{p_{\underline{I},k}}^{<0}\Hilb \BA^n \cong \mathsf D\oplus  \Hom_{R}  \left( \Fm^k/I^{(j+1)},I^{(j)}/\Fm^k \right),
\]
which together with the TNT property of $[\underline{I}]$ gives the statement.
\end{proof}
}

{
In the homogeneous setting, we can prove a much stronger result.
\begin{prop}\label{cor:thmB-2step-homogeneous}
   Let   $[I^{(1)} \supset I^{(2)}]\in (\Hilb\BA^n)^{\BG_m}$ be a nesting of homogeneous  ideals    such that $I^{(1)}\supset \Fm^k\supset I^{(2)} $ and 
   \[
   \mathsf T_{[I^{(1)}  \supset I^{(2)}]} ^{< 0}\Hilb\BA^n \cong \mathsf T_{[I^{(1)} \supset I^{(2)}]} ^{= -1}\Hilb\BA^n.
   \]
      Then, we have 
      \begin{itemize}
        \item $\mathsf T_{[I^{(1)} \supset \mathfrak{m}^k \supset I^{(2)}]} ^{\geqslant 0}\Hilb\BA^n \cong \mathsf T_{[I^{(1)}\supset I^{(2)}]} ^{\geqslant 0}\Hilb\BA^n $,
        \item $\mathsf T_{[I^{(1)} \supset \mathfrak{m}^k \supset I^{(2)}]} ^{< 0}\Hilb\BA^n \cong \mathsf T_{[I^{(1)} \supset \mathfrak{m}^k \supset I^{(2)}]} ^{= -1}\Hilb\BA^n$ \\ \hspace*{2.9cm}${}\cong \mathsf T_{[I^{(1)}   \supset I^{(2)}]} ^{= -1}\Hilb\BA^n \oplus \Hom_{\BC}\big((R/I^{(2)})_{k},I^{(1)}_{k-1}\big)$.
        \end{itemize}
\end{prop}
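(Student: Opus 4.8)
The plan is to realise all the tangent spaces involved as spaces of compatible tuples of \emph{graded} homomorphisms and to compare them weight by weight. Abbreviate $\mathsf T_3=\mathsf T_{[I^{(1)}\supset\Fm^k\supset I^{(2)}]}\Hilb\BA^n$ and $\mathsf T_2=\mathsf T_{[I^{(1)}\supset I^{(2)}]}\Hilb\BA^n$, and write $\mathsf T_3^{=d}$, $\mathsf T_2^{=d}$ for their weight-$d$ pieces under the $\BG_m$-action. Using the identification of \Cref{rem:deftheta}, a vector in $\mathsf T_3$ is a triple $(\varphi_1,\psi,\varphi_2)$ with $\psi\in\Hom_R(\Fm^k,R/\Fm^k)$ making the two squares
\[
\begin{tikzcd}
\Fm^k\arrow[r,hook]\arrow[d,"\psi"]&I^{(1)}\arrow[d,"\varphi_1"]\\
R/\Fm^k\arrow[r,two heads]&R/I^{(1)}
\end{tikzcd}
\qquad
\begin{tikzcd}
I^{(2)}\arrow[r,hook]\arrow[d,"\varphi_2"]&\Fm^k\arrow[d,"\psi"]\\
R/I^{(2)}\arrow[r,two heads]&R/\Fm^k
\end{tikzcd}
\]
commute; forgetting $\psi$ gives a graded map $F\colon\mathsf T_3\to\mathsf T_2$, and exactly as in the proof of \Cref{lemma:tech3} (with $j=1$) one gets $\ker F\cong\Hom_R(\Fm^k/I^{(2)},I^{(1)}/\Fm^k)$.

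The decisive structural input is that $\Hom_R(\Fm^k,R/\Fm^k)$ is concentrated in weight $-1$, as recalled in \eqref{eq:tgmk}; hence in every weight $d\ne-1$ the middle entry $\psi$ of a tangent vector must vanish. Combined with the containments $I^{(1)}\supset\Fm^k\supset I^{(2)}$, which give $(I^{(1)})_e=R_e$ and $(R/I^{(1)})_e=0$ for $e\ge k$ and $(I^{(2)})_e=0$ for $e<k$, the bookkeeping becomes transparent. When $\psi=0$ the two squares force $\varphi_1|_{\Fm^k}=0$ and $\varphi_2(I^{(2)})\subset\Fm^k/I^{(2)}$; a direct degree count shows that for $d\ge0$ both conditions hold automatically for every $(\varphi_1,\varphi_2)\in\mathsf T_2^{=d}$. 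Therefore $F$ restricts to an isomorphism $\mathsf T_3^{=d}\xrightarrow{\sim}\mathsf T_2^{=d}$ in each non-negative weight, which is the first bullet.

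For the negative part I would split into two ranges. In weight $d\le-2$ one again has $\psi=0$, so $F$ embeds $\mathsf T_3^{=d}$ into $\mathsf T_2^{=d}$; but the hypothesis $\mathsf T_2^{<0}=\mathsf T_2^{=-1}$ forces $\mathsf T_2^{=d}=0$, whence $\mathsf T_3^{=d}=0$. This yields $\mathsf T_3^{<0}=\mathsf T_3^{=-1}$. In weight $-1$ the kernel contributes $\Hom_R(\Fm^k/I^{(2)},I^{(1)}/\Fm^k)_{-1}$, which---since $\Fm^k/I^{(2)}$ is generated in degree $k$ with degree-$k$ piece $(R/I^{(2)})_k$, while $I^{(1)}/\Fm^k$ sits in degrees $<k$ with $(k-1)$-st piece $I^{(1)}_{k-1}$---is canonically $\Hom_\BC\big((R/I^{(2)})_k,I^{(1)}_{k-1}\big)$.

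The step I expect to be the main obstacle is the surjectivity of $F$ in weight $-1$. Concretely, given a compatible pair $(\varphi_1,\varphi_2)\in\mathsf T_2^{=-1}$ one must produce $\psi$, equivalently a linear map $\bar\psi\colon R_k\to R_{k-1}$, with $\bar\psi\equiv\varphi_1\pmod{(I^{(1)})_{k-1}}$ on $R_k$ and $\bar\psi=\varphi_2$ on $(I^{(2)})_k$. These two prescriptions are consistent on $(I^{(2)})_k$ precisely because of the weight-$(-1)$ nesting compatibility of $(\varphi_1,\varphi_2)$, and they extend off $(I^{(2)})_k$ because $R_{k-1}\onto(R/I^{(1)})_{k-1}$ is surjective; that an arbitrary linear $\bar\psi$ defines a genuine module map uses $(R/\Fm^k)_{\ge k}=0$. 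Granting this, the short exact sequence $0\to\ker F\cap\mathsf T_3^{=-1}\to\mathsf T_3^{=-1}\xrightarrow{F}\mathsf T_2^{=-1}\to0$ of $\BC$-vector spaces splits, giving $\mathsf T_3^{=-1}\cong\mathsf T_2^{=-1}\oplus\Hom_\BC\big((R/I^{(2)})_k,I^{(1)}_{k-1}\big)$ and hence the second bullet.
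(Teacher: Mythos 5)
Your proof is correct and takes essentially the same route as the paper's: both rest on the concentration of $\Hom_R(\Fm^k,R/\Fm^k)$ in weight $-1$ recorded in \eqref{eq:tgmk}, compare the tangent spaces through the forgetful map, prove surjectivity in weight $-1$ by exactly the lift you describe (prescribe $\varphi_2$ on $(I^{(2)})_k$ and lift $\varphi_1$ over the surjection $R_{k-1}\onto (R/I^{(1)})_{k-1}$ on a complement, which is precisely the paper's block-matrix section $\sigma$), and identify the kernel with $\Hom_{\BC}\big((R/I^{(2)})_k,I^{(1)}_{k-1}\big)$. The only presentational difference is that you establish the non-negative part and the vanishing in weights $\leqslant -2$ by direct degree counts, where the paper invokes \Cref{rem:nonneg-punctual}, \Cref{rem:BBHOMOo} and the hypothesis on $\mathsf T^{<0}_{[I^{(1)}\supset I^{(2)}]}\Hilb\BA^n$.
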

\begin{proof}    The first isomorphism is a consequence of the vanishing $\mathsf t^{\geqslant 0}_{[\Fm^k]}\Hilb\BA^n= 0$ and of the isomorphism  $\mathsf T^{\geqslant 0}_{[I^{(1)}\supset I^{(2)}]}\Hilb\BA^n\cong \mathsf T _{[I^{(1)}\supset I^{(2)}]}\Hilb^+\BA^n$, see \Cref{rem:nonneg-punctual} and \Cref{rem:BBHOMOo}.  The second isomorphism is a consequence of our assumptions on the nesting $I^{(1)}\supset I^{(2)}$.
     We move now to the proof of the last isomorphism. We start by showing that the natural projection 
     \[
\begin{tikzpicture}
         \node (A) at (0,0) [] {$\mathsf T_{[I^{(1)} \supset \mathfrak{m}^k \supset I^{(2)}]} ^{=-1}\Hilb\BA^n$};
         \node (B) at (3.75,0) [] {$\mathsf T_{[I^{(1)} \supset   I^{(2)}]} ^{=-1}\Hilb\BA^n$};
         \draw [->>] (A) --node[above]{\tiny $\pi$} (B);
\end{tikzpicture}
     \]
     is surjective.
     We look at the $-1$-tangent vectors of the triple $[I^{(1)} \supset \mathfrak{m}^k \supset I^{(2)}]$, i.e.~triples of tangent vectors making the diagram \eqref{eq:diaginc} commute. Because of the   assumption on the negative tangent space and of the isomorphisms in \eqref{eq:tgmk}, we can focus on the homogeneous  pieces of degrees $k$ and $k-1$ of the ideals and of the quotients respectively. By the assumption on the nesting, at the level of vector spaces, we have
\begin{itemize}
\item $I_k^{(1)} = (\mathfrak{m}^k)_k = R_k \cong I^{(2)}_k \oplus (R/I^{(2)})_k$,
\item $(R/I^{(2)})_{k-1} = (R/\mathfrak{m}^k)_{k-1} = R_{k-1} \cong (R/I^{(1)})_{k-1} \oplus I^{(1)}_{k-1}$.
\end{itemize}

Surjectivity of $\pi$ is equivalent to the fact that, if $[\varphi^{(1)},\varphi^{(2)}]\in\mathsf T^{=-1}_{[I^{(1)}\supset I^{(2)}]}\Hilb\BA^n$ is a tangent vector, then there exists a $\varphi^k\in\mathsf T^{=-1}_{[\Fm^k]}\Hilb\BA^n$ making the following diagram commute.
\begin{equation}\label{eq:vector-space-level2}
\begin{tikzpicture}[xscale=2]
\node at (0,0.025) [] {$\overbrace{I^{(2)}_k \oplus (R/I^{(2)})_k}^{(\mathfrak{m}^k)_k}$};
\node at (-2,0) [] {$\overbrace{I^{(2)}_k \oplus (R/I^{(2)})_k}^{I^{(1)}_k}$};
\node at (2,-0.25) [] {$I^{(2)}_k$};


\node at (-2,-2) [] {$(R/I^{(1)})_{k-1}$};
\node at (0,-2.27) [] {$\underbrace{(R/I^{(1)})_{k-1} \oplus I^{(1)}_{k-1}}_{(R/\mathfrak{m}^k)_{k-1}}$};
\node at (2,-2.27) [] {$\underbrace{(R/I^{(1)})_{k-1} \oplus I^{(1)}_{k-1}.}_{(R/I^{(2)})_{k-1}}$};

\draw [-latex',dotted] (0,-0.5) --node[right]{\tiny $\varphi_k$} (0,-1.75);
\draw [-latex'] (-2,-0.5) --node[left]{\tiny $\left[\begin{array}{c|c} \varphi^{(1)}_1& \varphi_2^{(1)} \end{array}\right]$} (-2,-1.75);
\draw [-latex'] (2,-0.5) --node[right]{\tiny $\left[\begin{array}{c} \varphi^{(2)}_1 \\ \hline \varphi^{(2)}_2\end{array}\right]$} (2,-1.75);

\draw [<-left hook] (-1.5,-0.25) --node[above]{\tiny $\left[\begin{array}{c|c} \text{id}& 0 \\ \hline 0&\text{id}\end{array}\right]$} (-0.5,-0.25);
\draw [<-left hook] (0.5,-0.25) --node[above]{\tiny $\left[\begin{array}{c} \text{id} \\ \hline 0\end{array}\right]$} (1.85,-0.25);

\draw [<<-] (-1.65,-2.) --node[above]{\tiny $\left[\begin{array}{c|c} \text{id}& 0 \end{array}\right]$} (-0.65,-2.);
\draw [<<-] (0.65,-2.) --node[above]{\tiny $\left[\begin{array}{c|c} \text{id}& 0 \\ \hline 0&\text{id} \end{array}\right]$} (1.35,-2.);
\end{tikzpicture}
\end{equation}
In particular, we have $\varphi_1^{(1)}\equiv \varphi_1^{(2)}$. A possible solution is 
\[
\varphi_k = \left[\begin{array}{c|c} \varphi^{(1)}_1& \varphi_2^{(1)} \\ \hline \varphi_2^{(2)}& 0 \end{array}\right].
\]
Note that the isomorphisms in \eqref{eq:tgmk} ensure that $\varphi_k$ defines uniquely an element in $\Hom_R(\Fm^k,R/\Fm^k)$. Note also that that this construction defines a section 
\[
\begin{tikzcd}
    \mathsf T_{[I^{(1)} \supset   I^{(2)}]} ^{=-1}\Hilb\BA^n\arrow[r,"\sigma",hook] &     \mathsf T_{[I^{(1)} \supset \mathfrak{m}^k \supset I^{(2)}]} ^{=-1}\Hilb\BA^n
\end{tikzcd} 
\]
of $\pi$. In order to conclude, we need to compute a complement of $\sigma\left( \mathsf T_{[I^{(1)}\supset I^{(2)}]} ^{=-1}\Hilb\BA^n\right)$. This is equivalent to find the tangents $\varphi_k\in\mathsf T^{=-1}_{[\Fm^k]}\Hilb\BA^n$ making the diagram \eqref{eq:vector-space-level2} commute with $\varphi_i^{(j)}=0$, for $i,j=1,2$. This is the case if 
\[
\varphi_k = \left[\begin{array}{c|c} 0 & 0 \\ \hline 0& \psi_k \end{array}\right],
\]
for every $\psi_k \in \Hom_{\BC}\big((R/I^{(2)})_{k},I^{(1)}_{k-1}\big)$. This concludes the proof.
\end{proof}

The following example shows that the hypotheses of \Cref{cor:thmB-2step-homogeneous} are sharp.
\begin{example}
    Consider the nesting of ideals in 4 variables
    \[
    I^{(1)}=\Fm^2\supset\Fm^3\supset I^{(2)}=x_4\Fm^2+ (x_3x_1,x_3x_2,x_2^2)\Fm+ (x_1^4,x_3^5),
    \]
    with corresponding Hilbert functions
    \[
    \bh_{R/I^{(1)}}=(1,4 ),\quad \bh_{R/I^{(2)}}=(1, 4, 10, 3, 2 ).
    \]
    Then, a direct computation using  the package \cite{HilbQuotPaoloLella} shows that 
    \[
    \mathsf t_{[I^{(1)}\supset I^{(2)}]}^{=-2}\Hilb\BA^4=10,\qquad
        \mathsf t_{[I^{(1)}\supset I^{(2)}]}^{=-3}\Hilb\BA^4=8,\qquad
            \mathsf t_{[I^{(1)}\supset I^{(2)}]}^{=-4}\Hilb\BA^4=0,
    \]
    and  
    \[
    \mathsf t^{<0}_{[I^{(1)}\supset \Fm^3\supset I^{(2)}]}\Hilb\BA^4 = 
    \mathsf t^{=-1}_{[I^{(1)}\supset I^{(2)}]}\Hilb\BA^4 + \hom_{\BC}((R/I^{(2)})_2,I^{(1)}_{2})+1.   
    \]
\end{example}
}

We are now in a position to prove \Cref{THMINTRO:B} from the introduction.
\begin{theorem}[\Cref{THMINTRO:B}]\label{thm:mainB}
     Let $V\subset \Hilb^{\underline{d}} \BA^n$ be a generically reduced elementary component. Assume that there is at least a point $[\underline{I}]\in V$ and some $k\in\BZ_{>0}$ such that $I^{(j)}\supsetneq \Fm^k\supsetneq I^{(j+1)} $, for some $1\leqslant j\leqslant r$, where we assume $I^{(r+1)}=0$ by convention. Put 
    \[
    \widetilde{\underline {d}}=\left(d_1,\ldots, d_j,\binom{n+k-1}{n},d_{j+1},\ldots,d_r\right),
    \]  
    Then, there is a generically non-reduced elementary component $\widetilde V\subset \Hilb^{\widetilde{\underline{d}}}\BA^n$ such that  
$ \widetilde V_{\red}\cong   V_{\red} . $
\end{theorem}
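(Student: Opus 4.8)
The plan is to produce $\widetilde V$ as the closure of the image of $V$ under the insertion of $\Fm_p^k$ (the $(k-1)$-st infinitesimal neighbourhood of the support point $p$) in the $(j{+}1)$-st slot, and to control it through \Cref{lemma:tech3}. First I would introduce the insertion morphism $\iota\colon[\underline I]\mapsto[I^{(1)}\supset\cdots\supset I^{(j)}\supset\Fm_p^k\supset I^{(j+1)}\supset\cdots\supset I^{(r)}]$ and the forgetful (deletion) morphism $\delta\colon\Hilb^{\widetilde{\underline d}}\BA^n\to\Hilb^{\underline d}\BA^n$ dropping the $(j{+}1)$-st term, so that $\delta\circ\iota=\id$. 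The two containments $I^{(j)}\supsetneq\Fm_p^k\supsetneq I^{(j+1)}$ are conditions on the Hilbert--Samuel functions $\bh_{R/I^{(j)}}$ and $\bh_{R/I^{(j+1)}}$ (that $R/I^{(j)}$ is concentrated in degrees $<k$ and that $\bh_{R/I^{(j+1)}}$ coincides with the Hilbert function of $R$ in degrees $<k$); since these functions are locally constant on $\Hilb^+\BA^n$ and $V$ is irreducible, the single point provided by hypothesis forces the containments on all of $V_{\red}$, and they stay strict since the colengths $d_j<\binom{n+k-1}{n}<d_{j+1}$ are fixed. Thus $\iota$ is defined on $V_{\red}$ and, being a section of $\delta$, is an isomorphism onto a locally closed subvariety; I set $\widetilde V$ to be the component of $\Hilb^{\widetilde{\underline d}}\BA^n$ containing $\overline{\iota(V_{\red})}$. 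Note already that $\overline{\iota(V_{\red})}$ parametrises only fat nestings, since the locus of subschemes supported at one point is closed, so a flat limit of the fat tops $\Fm_{p_t}^k$ is again supported at one point.

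Next I would compute the tangent space at a general punctual point $q=p_{\underline I,k}$, where $[\underline I]\in V$ is general and supported at the origin. By translation invariance and \Cref{thm:tnt per nested}, such $[\underline I]$ has TNT and is a smooth point of $V$, so $\mathsf t_{[\underline I]}\Hilb\BA^n=\dim V$, $\mathsf t^{<0}_{[\underline I]}\Hilb\BA^n=n$ and $\mathsf t^{\geqslant0}_{[\underline I]}\Hilb\BA^n=\dim V-n$. \Cref{lemma:tech3} gives
\[
\mathsf T^{<0}_{q}\Hilb\BA^n\cong\mathsf T^{<0}_{[\underline I]}\Hilb\BA^n\oplus\Hom_R\!\big(\Fm^k/I^{(j+1)},\,I^{(j)}/\Fm^k\big),
\]
so $\mathsf t^{<0}_q\Hilb\BA^n=n+e$ with $e=\hom_R(\Fm^k/I^{(j+1)},I^{(j)}/\Fm^k)$. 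Running the same diagram chase on the non-negative part, the two compatibility conditions forced by the inserted $\Fm^k$ read $\varphi^{(j)}(\Fm^k)=0$ and $\varphi^{(j+1)}(I^{(j+1)})\subset\Fm^k/I^{(j+1)}$; by \Cref{def:negativetangents} these are automatic for filtration-non-negative tangents precisely because $\Fm^k\subset I^{(j)}$ and $I^{(j+1)}\subset\Fm^k$, while $\mathsf t^{\geqslant0}_{[\Fm^k]}\Hilb\BA^n=0$ (every homomorphism $\Fm^k\to R/\Fm^k$ has negative weight) forces the middle component to vanish. This yields $\mathsf T^{\geqslant0}_q\Hilb\BA^n\cong\mathsf T^{\geqslant0}_{[\underline I]}\Hilb\BA^n$, exactly as in \Cref{cor:thmB-2step-homogeneous} but without homogeneity, whence $\mathsf t^{\geqslant0}_q\Hilb\BA^n=\dim V-n$ and $\mathsf t_q\Hilb\BA^n=\dim V+e$.

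To see $\widetilde V$ is an irreducible component, let $W\supseteq\overline{\iota(V_{\red})}$ be the ambient component. Since $\delta(W)$ is irreducible and contains $V_{\red}=\delta(\iota(V_{\red}))$, maximality of $V$ gives $\overline{\delta(W)}=V$, so $\delta\colon W\to V$ is dominant. If the general point of $W$ is fat, then $W$ is elementary and translation-stable, the support morphism $W\to\BA^n$ has all fibres isomorphic to $W^+=W\cap\Hilb^+\BA^n$, and $\dim_q W^+\leqslant\mathsf t^{\geqslant0}_q\Hilb\BA^n=\dim V-n$ gives $\dim W=\dim W^++n\leqslant\dim V$; combined with $\dim W\geqslant\dim\overline{\iota(V_{\red})}=\dim V$ this forces $W=\overline{\iota(V_{\red})}=\widetilde V_{\red}\cong V_{\red}$. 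The delicate point — and the step I expect to be the main obstacle — is to rule out that the general point of $W$ is non-fat. When $j<r$ this is immediate: $\delta$ deletes an intermediate term and hence preserves the top of a nesting and its support, so a non-fat general point of $W$ would make the general point of $\delta(W)=V$ non-fat, contradicting that $V$ is elementary. When $j=r$ the inserted $\Fm^k$ is the top term, $\delta$ no longer preserves the support, and one must instead show directly that $\Fm^k_p$ is rigid, i.e.\ an isolated (necessarily non-reduced) point of the family $\{Y\mid I^{(j+1)}\subset I_Y\subset I^{(j)},\ \colen Y=\binom{n+k-1}{n}\}$, whose tangent space at $\Fm^k$ is exactly the summand $\Hom_R(\Fm^k/I^{(j+1)},I^{(j)}/\Fm^k)$; equivalently, that the $e$ extra negative tangents are obstructed. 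I would establish this by combining the non-smoothability of $q$ (transferred from $V$ through $\delta$, which sends reduced nestings to reduced nestings) with the punctual bound $\mathsf t^{\geqslant0}_q\Hilb\BA^n=\dim V-n$.

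Finally, granting component-ness, generic non-reducedness follows at once: $q$ lies on the unique component $\widetilde V_{\red}$, yet $\mathsf t_q\Hilb\BA^n=\dim V+e>\dim V=\dim_q\widetilde V_{\red}$, because $e\geqslant1$ — indeed $\Fm^k/I^{(j+1)}$ and $I^{(j)}/\Fm^k$ are nonzero modules over the local Artinian ring $R/I^{(j+1)}$, and composing a surjection onto the residue field with an inclusion of the socle produces a nonzero homomorphism. By translation invariance this singularity propagates along a dense subset of $\widetilde V$, so $\widetilde V$ is generically non-reduced, which completes the proof.
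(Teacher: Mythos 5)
Your overall strategy coincides with the paper's: insert $\Fm^k$ via a section $\iota$ of the forgetful morphism $\delta$, identify $\widetilde V_{\red}$ with $V_{\red}$, show that the non-negative tangent space at $q=\iota([\underline I])$ is unchanged while the negative one grows by $e=\hom_R\big(\Fm^k/I^{(j+1)},I^{(j)}/\Fm^k\big)\geqslant 1$ (this is exactly \Cref{lemma:tech3}), and deduce generic non-reducedness from the excess of tangents (the paper phrases this as failure of TNT at the general point, via \Cref{thm:tnt per nested}). Your auxiliary steps are correct: the propagation of the strict sandwich condition over all of $V$ by local constancy of Hilbert--Samuel functions, the computation $\mathsf T^{\geqslant 0}_q\Hilb\BA^n\cong\mathsf T^{\geqslant 0}_{[\underline I]}\Hilb\BA^n$ without homogeneity assumptions, the argument that $e\geqslant 1$, the dimension bound $\dim W=\dim W^++n\leqslant \mathsf t^{\geqslant 0}_q+n=\dim V$ when the general point of the ambient component $W$ is fat, and the support argument excluding non-fat general points when $j<r$. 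In these respects your write-up is actually more detailed than the paper, whose proof disposes of the key claim in one sentence (``also the closure $\widetilde V$ of $H^n_{\widetilde{\underline{\bh}}}\times\BA^n$ is an elementary component'') and only then runs the tangent-space comparison.

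The gap is exactly where you flagged it, and your proposed repair does not close it. When $j=r$, neither of your two ingredients rules out a \emph{mixed} component: non-smoothability of $q$ only excludes the smoothable component, and the punctual bound $\mathsf t^{\geqslant 0}_q=\dim V-n$ only constrains components whose general point is fat. But a component $W\supseteq\widetilde V$ could a priori have general point of the form $\big(\underline Z, Y_p\sqcup Y'\big)$ with $\underline Z\in V$ fat at $p$, $Y_p\supseteq Z^{(r)}$ punctual at $p$, and $Y'$ of positive length supported elsewhere; such a $W$ is neither elementary nor contained in the smoothable component, and $\delta$ still maps it onto $V$. Concretely, the closure of $\big\{(\underline Z,\,Z^{(r)}\sqcup P)\ \big|\ \underline Z\in V,\ |P|=\tbinom{n+k-1}{n}-d_r\big\}$ is irreducible of dimension $\dim V+n\big(\tbinom{n+k-1}{n}-d_r\big)>\dim\widetilde V$, so if $\widetilde V$ sat inside it, $\widetilde V$ would not be a component. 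What is needed (and what your own semicontinuity-of-fibre-dimension reduction shows is needed) is precisely that for general $[\underline Z]\in V$ the point $[\Fm_p^k]$ is set-theoretically isolated in the fibre $\big\{Y\ \big|\ Z^{(r)}\subseteq Y,\ \length Y=\tbinom{n+k-1}{n}\big\}$, i.e.\ that none of the $e$ extra negative tangents integrates to an actual deformation splitting length off $\Fm_p^k$ while keeping $Z^{(r)}$ inside. You name this rigidity statement but give no argument for it, and it does not follow from non-smoothability plus the punctual bound; an obstruction argument for these deformations is genuinely required. It is worth noting that the paper itself supplies no such argument either (its one-sentence assertion covers this case too, and all of its stated applications have the insertion in the interior of the nesting, where your support argument does apply), so your proposal makes the missing step visible rather than resolving it; but as a proof of the theorem as stated, it is incomplete at exactly this point.
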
 

\begin{proof}
 Let $\bh^{k}$ be the Hilbert function of the local algebra $R/\Fm^k$. 
    Let also $\underline{\bh}$ be the $r$-tuple of Hilbert functions such that the closure  of $H^n_{\underline{\bh}}\times\BA^n$ in $\Hilb\BA^n$ is the elementary component $V$. Denote by $\widetilde{\underline{\bh}}$ the sequence 
    \[
    \widetilde{\underline{\bh}}={(\bh_1,\ldots,\bh_j,\bh^k,\bh_{j+1},\ldots,\bh_r)}. 
    \]
    Then, by hypothesis, the power $\Fm^k$ of the maximal ideal can be strictly sandwiched in between the $j$-th and $(j+1)$-st ideals of  every nesting in $V$. Moreover,  also the closure $\widetilde V$ of $H^n_{\widetilde{\underline{\bh}}}\times\BA^n$ in $\Hilb\BA^n$ is an elementary component. 
    Consider the forgetful morphism 
    \[
    \begin{tikzcd} H^n_{\widetilde{\underline{\bh}}}\arrow[r]&H^n_{\underline{\bh}}.
    \end{tikzcd}
    \]
    This is an isomorphism. Indeed, it is clearly bijective and, at the tangent space level is an isomorphism by the results in \cite{2step}, see \Cref{subsec:HS}. On the other hand, the tangent space to the whole Hilbert scheme also consists of negative tangents. The dimensions of the respective negative parts of the tangent spaces do not agree as it can be computed via \Cref{lemma:tech3}. This concludes the proof.
\end{proof}

\begin{remark}
    Note that \Cref{thm:mainB} recovers \cite[Theorem 5]{UPDATES}. For instance, if $[I]\in\Hilb\BA^n$  corresponds to a $\Fm$-primary ideal having TNT, we can consider the commutative diagram
    \[
    \begin{tikzcd}
    R\arrow[d ]& \Fm   \arrow[l,hook' ]\arrow[d ]& I \arrow[l,hook' ]\arrow[d ]\\
    0& R/\Fm \arrow[l,two heads ]& R/I \arrow[l,two heads ]
\end{tikzcd}
    \]
    and  apply \Cref{thm:mainB}. This gives
    \[
      \mathsf t^{=-1}_{[\Fm\supset I]}\Hilb\BA^n=     \mathsf t^{=-1}_{[ I]}\Hilb\BA^n+   \hom_R(\Fm/I,R/\Fm)=n+\bh_{R/I}(1).
    \]
\end{remark}

\begin{example}
Fix some $4\leqslant n\leqslant  15$ and $2 \leqslant s \leqslant n-2$. Consider a point $[I^{(1)} \supset I^{(2)}] \in \Hilb^{s+1,n+3}\mathbb{A}^n$ with the TNT property, see \Cref{prop:checked}.   Hence, by  \Cref{cor:thmB-2step-homogeneous}, the Hilbert scheme $\Hilb^{s+1,n+1,n+3} \mathbb{A}^n$ has a generically non-reduced component whose dimension is $n(n-s) + 2\left(\tbinom{n+1}{2}-2\right) + n$ and
\[
\mathsf t^{=-1}_{[I^{(1)}  \supset \mathfrak{m}^2 \supset I^{(2)}]}\Hilb^{s+1,n+1,n+3}\BA^n = n +  \bh_{R/I^{(2)}} (2)   \bh_{I^{(1)}} (1) = n + 2(n-s).
\] 
For $n$ general, we did not prove TNT. However, the negative tangent space is concentrated in degree $-1$ and we get
\[
\mathsf t^{=-1}_{[I^{(1)}  \supset \mathfrak{m}^2 \supset I^{(2)}]}\Hilb^{s+1,n+1,n+3}\BA^n = \mathsf t^{=-1}_{[I^{(1)}    \supset I^{(2)}]}\Hilb^{s+1,n+3}\BA^n + 2(n-s).
\] 
\end{example}

\begin{example}
Consider a generic point $[I^{(1)}\supset I^{(1)}]\in (\Hilb \BA^4)^{\BG_m}$ with corresponding Hilbert functions
\[
\bh_{R/I^{(1)}} = (1,4,3)\qquad\text{and}\qquad \bh_{R/I^{(2)}} = (1,4,10,18,10).
\]
The point $[I^{(1)}\supset I^{(2)}] \in \Hilb^{8,43} (\mathbb{A}^4)$ has the TNT property, direct check or see \cite{2step}. We have
\[
 I^{(1)} \supsetneq \mathfrak{m}^3 \supsetneq I^{(2)} ,
\]
as required in \Cref{cor:thmB-2step-homogeneous}. The dimension of the tangent space to $\Hilb^{8,15,43} (\mathbb{A}^4)$ at the point $[I^{(1)} \supset \Fm^3 \supset I^{(2)}]$ jumps by
\[
\hom_{\BC}\big(I^{(1)}_2,(R/I^{(2)})_3\big) = \bh_{I^{(1)}} (2)  \bh_{ R/I^{(2)}}(3) = 7 \cdot 18 = 126.
\]
\end{example}

\section{\texorpdfstring{Proof of \Cref{THMINTRO:C}}{Proof of Theorem C}}
\label{sec5}
In this section we study the Hilbert scheme of points on singular hypersurfaces of $\BA^3$, and we prove \Cref{THMINTRO:C}. We start with two examples. The first focuses on surface singularities of embedding dimension higher than 3. The second is an application of a result by Iarrobino from \cite{IARRO}.

\begin{example}\label{ex:RNC}
    Consider the surface $S\subset \BA^4$ defined by the following ideal
    \[
    I_S=\rk\begin{pmatrix}
        x_1&x_2&x_3\\x_2&x_3&x_4
    \end{pmatrix}\leqslant 1.
    \]
    Note that $S$ is an affine chart of the vertex of a cone over the twisted cubic. Let now $Z\subset \BA^4$ be the zero-dimensional subscheme of length 8 defined by
    \[
    I_Z= (x_1,x_3)^2+(x_2,x_4)^2+  \det \begin{pmatrix}
        x_1 &x_3\\x_2 &x_4
    \end{pmatrix}.
    \]
    Recall, that $[Z]\in\Hilb^8\BA^4 $ has the TNT property and hence $Z$ is non-smoothable, see \cite{8POINTS,Iarrob}. Clearly, we have $I_S\subset I_Z$ and hence $Z\subset S$. As a consequence, the scheme $\Hilb^8 S$ is reducible.
\end{example}
\begin{example}\label{ex:iarrosurf}
    As shown in \cite{IARRO}, the generic local Artinian algebra $(A,\Fm_A)$  having Hilbert function $(1,3,6,10,15,21,17,5)$ is non smoothable. Note that $\Fm_A^8\equiv 0$. As a consequence, for every hypersurface $S\subset \BA^3$ having a singularity of multiplicity at least 8, the scheme $\Hilb^{78}S$ is reducible.
\end{example}

\begin{remark}
    In \cite{IARRO}, the author argues via dimension count. Precisely, he estimates the dimension of the locus parametrising compressed algebras of given socle type, showing the existence of loci too big to fit in the smoothable component. In the paper \cite{2step} the authors also apply a similar strategy  to a different class of algebras, namely 2-step ones. Here, we apply the same techniques to 2-step algebras corresponding to zero-dimensional closed subschemes of hypersurfaces of $\BA^3$.
\end{remark}

\begin{theorem}[\Cref{THMINTRO:C}]\label{thm:mainC}
Let $S\subset \BA^3$ be a hypersurface. If $S$ has a singular point of multiplicity at least 5, then $\Hilb^d S $ is reducible for $d\geqslant 22$.
 \end{theorem}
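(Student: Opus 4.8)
The plan is to produce, for every $d\geqslant 22$, an irreducible family of length-$d$ subschemes of $S$, concentrated (in part) at the singular point, whose generic member is non-reduced and whose dimension equals $2d=\dim\Hilb^d_{\sm}S$; comparing this family with the smoothable component will force reducibility. First I would reduce to a local model: after translating the singular point to the origin, write $S=V(f)$ with $f\in\Fm^5$, since multiplicity at least $5$ means $\mult_0(f)\geqslant 5$. The key elementary observation is that any $\Fm$-primary ideal $I\subseteq R=\BC[x_1,x_2,x_3]$ with $\Fm^5\subseteq I$ automatically satisfies $f\in I$, so the corresponding fat point $\Spec(R/I)$ is a closed subscheme of $S$ supported at $0$.

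Next I would single out a $2$-step stratum in $\BA^3$ realising the threshold. Take the quotient Hilbert function $\boldsymbol q=(1,3,6,8,4)$, i.e.\ the order-$3$ $2$-step ideals with $\dim I_3=2$ and $\dim I_4=11$ (so that $\Fm^5\subseteq I$, as $\Fm^5=0$ in $R/I$). This is without linear syzygies, since $\bh_I(4)-3\,\bh_I(3)=11-6=5\geqslant 0$, so \Cref{prop:dim2step} applies and gives
\[
\dim H^3_{\boldsymbol q}=\bh_I(3)\,\boldsymbol q(3)+\bigl(\bh_I(4)-2\,\bh_I(3)\bigr)\,\boldsymbol q(4)=2\cdot 8+(11-2\cdot 2)\cdot 4=44=2\cdot 22.
\]
By the theory of $2$-step ideals in \cite{2step} this stratum is non-empty, and I would fix a top-dimensional irreducible component $\overline H_0$, of dimension $44$; its generic point is a non-reduced algebra of length $22$ supported at $0$, and by the observation above $\overline H_0\subseteq\Hilb^{22}S$.

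For general $d\geqslant 22$ I would bootstrap by adding reduced points. Let $F_d$ be the closure of the locus of subschemes $Z_0\sqcup W$ with $[Z_0]\in\overline H_0$ and $W\subset S_{\sm}$ reduced of length $d-22$ and disjoint from $0$. Then $F_d$ is irreducible of dimension $44+2(d-22)=2d$, it is contained in $\Hilb^dS$, and its generic member is non-reduced. Since $S$ is a surface, $\Hilb^d_{\sm}S$ is irreducible of dimension $2d$ with reduced generic member. If $F_d$ were contained in $\Hilb^d_{\sm}S$, then equality of dimensions between two irreducible closed sets would force $F_d=\Hilb^d_{\sm}S$, contradicting that the generic member of $F_d$ is non-reduced; hence the generic point of $F_d$ lies in a component of $\Hilb^dS$ distinct from the smoothable one, and $\Hilb^dS$ is reducible.

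The main obstacle is that the dimension count is only \emph{tight}: at $d=22$ (and $d=23$) one has $\dim H^3_{\boldsymbol q}=2d$ rather than a strict inequality, so the naive ``too big to be smoothable'' argument is unavailable and must be replaced by the refined equal-dimension comparison above. This comparison crucially uses that \Cref{prop:dim2step} computes the dimension of the stratum \emph{exactly}, and that the smoothable component is irreducible of dimension precisely $2d$ with reduced generic point. Checking that the constant is $22$ — rather than something larger — amounts to verifying that among the admissible order-$3$ $2$-step functions $\boldsymbol q$ in three variables the quantity $\dim H^3_{\boldsymbol q}-2\length$ is maximised, and equals $0$, at $\boldsymbol q=(1,3,6,8,4)$; this finite optimisation is the computational heart of the argument.
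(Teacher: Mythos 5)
Your proposal is correct and takes essentially the same route as the paper: the same quotient Hilbert function $(1,3,6,8,4)$, the same observation that a length-$5$ Hilbert function forces $\Fm^5\subseteq I$ and hence $f_S\in I$ when the singular point has multiplicity at least $5$, the same dimension count $\dim H^3_{\bh}=44=\dim\Hilb^{22}_{\sm}S$ via \Cref{prop:dim2step}, and the same equal-dimension comparison against the smoothable component. The only difference is that you make explicit the bootstrap to $d>22$ by attaching reduced points on the smooth locus, a step the paper leaves implicit, while your closing ``finite optimisation'' remark is unnecessary for the statement itself, since exhibiting one admissible Hilbert function of size $22$ suffices.
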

\begin{proof}
Let us denote by $f_S\in R$ the equation of $S$. Recall also that the closed immersion $S\hookrightarrow \BA^3$ induces a closed immersion $\Hilb S\hookrightarrow \Hilb \BA^3$. Consider the 2-step Hilbert function $\bh=(1,3,6,8,4)$ of size $|\bh|=22$. Note that $\bh$ has length 5. Therefore, for every ideal $I\subset R$ such that $\bh_{R/I}\equiv \bh$, we have $f_S\in I$.  As a consequence, we have $H_{\bh}^3\subset \Hilb^{22} S$. To conclude, note that the smoothable component   $\Hilb^{22}_{\sm} S$ has dimension 44, while we have $\dim H^3_{\bh}=44$, as per \Cref{prop:dim2step}. This concludes the proof.
\end{proof}

\bibliographystyle{amsplain}

\providecommand{\bysame}{\leavevmode\hbox to3em{\hrulefill}\thinspace}
\providecommand{\MR}{\relax\ifhmode\unskip\space\fi MR }
\providecommand{\MRhref}[2]{%
  \href{http://www.ams.org/mathscinet-getitem?mr=#1}{#2}
}

\end{document}